\newtheorem{Theorem}{Theorem}[section]
\newtheorem{Lemma}[Theorem]{Lemma}
\newtheorem{Definition}[Theorem]{Definition}
\newtheorem{Remark}[Theorem]{Remark}
\newcommand{\Up}{{\rm Up}}
\newcommand{\Fup}{F_h^{\alpha}}
\newcommand{\vx}{x}
\newcommand{\vrh}{\vr_h}
\newcommand{\vrhup}{\vrh^{\rm up}}
\newcommand{\vrhdown}{\vrh^{\rm down}}
\newcommand{\vthup}{\vth^{\rm up}}
\newcommand{\vthdown}{\vth^{\rm down}}
\newcommand{\vuh}{\bm{u}_h}
\newcommand{\vu}{\bm{u}}
\newcommand{\vth}{\vt_h}
\newcommand{\vthout}{\vth^{\rm out}}
\newcommand{\vthB}{\vt_{B,h}}
\newcommand{\vtB}{\vt_{B}}
\newcommand{\uih}{u_{i,h}}
\newcommand{\Du}{\bD(\vu)}
\newcommand{\Dhuh}{\bD_h(\vuh)}
\newcommand{\br}{ \nonumber \\ }
\newcommand{\RE}[2]{R_E\left((#1)\mid(#2)\right)}
\newcommand{\REH}[2]{\mathbb{E}_{\cal H}\big((#1)|(#2) \big)}
\DeclareMathOperator{\dist}{dist}
\newcommand{\tor}{\mathbb{T}^d}
\newcommand{\bfv}{\bm{v}}
\newcommand{\bfphi}{\boldsymbol{\Phi}}
\newcommand{\Pim}{\Pi_\mathcal{T}}
\newcommand{\Piw}{\Pi_W}
\newcommand{\Piwi}{\Pi_W^{(i)}}
\newcommand{\difuh}{\bS_h:\Gradh \vuh }
\newcommand{\sumi}{\sum_{i=1}^d}
\newcommand{\ds}{\,{\rm d}S_x}
\newcommand{\grid}{{\cal T}_h}
\newcommand{\TS}{\Delta t}
\newcommand{\Div}{{\rm div}_x}
\newcommand{\Grad}{\nabla_x}
\newcommand{\Divh}{{\rm div}_h}
\newcommand{\Gradh}{\nabla_h}
\newcommand{\Laph}{\Delta_h}
\newcommand{\Gradd}{\nabla_\faces}
\newcommand{\Gradedge}{\Gradd}
\newcommand{\Divmesh}{{\rm div}_{\mathcal{T}}}
\newcommand{\pdedgei}{\eth_ \faces^{(i)}}
\newcommand{\pdmeshi}{\eth_{\cal T}^{(i)}}
\newcommand{\co}[2]{{\rm co}\{ #1 , #2 \}}
\newcommand{\EB}[1]{E_B\left( #1 \right)}
\newcommand{\Echi}[1]{E_{\chi} \left( #1 \right)}
\newcommand{\Ov}[1]{\overline{ #1 } }
\newcommand{\avs}[1]{ \left\{\hspace{-3pt}\left\{ #1 \right\}\hspace{-3pt} \right\} }
\newcommand{\aleq}{\stackrel{<}{\sim}}
\newcommand{\ageq}{\stackrel{>}{\sim}}
\newcommand{\Un}[1]{\underline{#1}}
\newcommand{\vr}{\varrho}
\newcommand{\tvr}{\widetilde \vr}
\newcommand{\tvu}{{\widetilde \vu}}
\newcommand{\tvt}{\widetilde \vt}
\newcommand{\tp}{\widetilde p}
\newcommand{\ts}{\widetilde s}
\newcommand{\vt}{\vartheta}
\newcommand{\Ovt}{\Ov{\vt}}
\newcommand{\Uvt}{\underline{\vt}}
\newcommand{\vm}{\bm{m}}
\newcommand{\ve}{\bm{e}}
\newcommand{\vn}{\bm{n}}
\newcommand{\vc}[1]{{\bm #1}}
\newcommand{\dx}{\,{\rm d} {x}}
\newcommand{\dt}{{\rm d} t }
\newcommand{\jump}[1]{\left\llbracket#1\right\rrbracket}
\newcommand{\Bigabs}[1]{\Big| #1\Big|}
\newcommand{\abs}[1]{{\left| #1 \right|}}
\newcommand{\Abs}{\abs}
\newcommand{\biggabs}[1]{\bigg| #1\bigg|}
\newcommand{\norm}[1]{\left\lVert#1\right\rVert}
\newcommand{\dxdt}{\dx \dt}
\newcommand{\intfaces}[1]{\int_{\faces}{ #1 \ds}}
\newcommand{\intfacesB}[1]{\int_{\faces}{ \left( #1\right) \ds}}
\newcommand{\Of}{\Omega^f}
\newcommand{\Os}{\Omega^s}
\newcommand{\Oc}{\Omega^C}
\newcommand{\OI}{\Omega^I}
\newcommand{\OO}{\Omega^O}
\newcommand{\Osh}{\Os_h}
\newcommand{\Ofh}{\Of_h}
\newcommand{\Och}{\Oc_h}
\newcommand{\OIh}{\OI_h}
\newcommand{\OOh}{\OO_h}
\newcommand{\intTd}[1]{\int_{\tor} #1 \dx}
\newcommand{\intTdB}[1]{\int_{\tor}\left( #1 \right)\dx}
\newcommand{\intOs}[1]{\int_{\Os} #1 \dx}
\newcommand{\intOsB}[1]{\int_{\Os} \left(#1\right) \dx}
\newcommand{\intOf}[1]{\int_{\Of} #1 \dx}
\newcommand{\intOfB}[1]{\int_{\Of} \left( #1 \right) \dx}
\newcommand{\intOsh}[1]{\int_{\Osh} #1 \dx}
\newcommand{\intOshB}[1]{\int_{\Osh} \left( #1 \right) \dx}
\newcommand{\intOch}[1]{\int_{\Och} #1 \dx}
\newcommand{\intOchB}[1]{\int_{\Och}\left( #1 \right)\dx}
\newcommand{\intTau}[1]{\int_0^\tau #1 \dt}
\newcommand{\intTauTd}[1]{\int_0^\tau \int_{\tor} #1 \dxdt}
\newcommand{\intTauTdB}[1]{\int_0^\tau \int_{\tor} \left( #1\right) \dxdt}
\newcommand{\intTauOch}[1]{\int_0^\tau \int_{\Och} #1 \dxdt}
\newcommand{\intTauOOh}[1]{\int_0^\tau \int_{\OOh} #1 \dxdt}
\newcommand{\intTauOf}[1]{\int_0^\tau \int_{\Of} #1 \dxdt}
\newcommand{\intTauOfB}[1]{\int_0^\tau \int_{\Of}\left( #1 \right)\dxdt}
\newcommand{\intTauOs}[1]{\int_0^\tau \int_{\Os} #1 \dxdt}
\newcommand{\intTauOsB}[1]{\int_0^\tau \int_{\Os} \left(#1 \right) \dxdt}
\newcommand{\intTauOfh}[1]{\int_0^\tau \int_{\Ofh} #1 \dxdt}
\newcommand{\intTauOsh}[1]{\int_0^\tau \int_{\Osh} #1 \dxdt}
\newcommand{\intTauOshB}[1]{\int_0^\tau \int_{\Osh} \left( #1 \right) \dxdt}
\newcommand{\intTTd}[1]{\int_0^T \int_{\tor} #1 \dxdt}
\newcommand{\intTOf}[1]{\int_0^T \int_{\Of} #1 \dxdt}
\newcommand{\intTOs}[1]{\int_0^T \int_{\Os} #1 \dxdt}
\newcommand{\intn}{\int_{0}^{t^{n+1}}}
\newcommand{\intnOshB}[1]{\int_0^{t^{n+1}} \int_{\Osh} \left( #1 \right) \dxdt}
\newcommand{\vv}{\bm{v}}
\newcommand{\ep}{\varepsilon}
\newcommand{\R}{\mathbb{R}}
\newcommand{\I}{\mathbb{I}}
\renewcommand{\S}{\bS (\Du)}
\def\softd{{\leavevmode\setbox1=\hbox{d}%
  \hbox to 1.05\wd1{d\kern-0.4ex{\char039}\hss}}}
\definecolor{Cgrey}{rgb}{0.85,0.85,0.85}
\definecolor{Cblue}{rgb}{0.50,0.85,0.85}
\definecolor{Cred}{rgb}{1,0,0}
\definecolor{fancy}{rgb}{0.10,0.85,0.10}
\definecolor{forestgreen}{rgb}{0.13, 0.55, 0.13}
\newcommand{\cblue}{\color{blue}}
\newcommand{\pd}{\partial}
\newcommand{\pdt}{\pd_t}
\newcommand{\Hc}{\mathcal{H}_{\tvt}}
\newcommand{\hHc}{\mathcal{H}_{\hvt}}
\newcommand{\penl}{{\varepsilon}}
\newcommand{\faces}{\mathcal{E}}
\newcommand{\facesi}{\faces _i}
\newcommand{\facesK}{\faces(K)}
\newcommand{\bD}{\mathbb D}
\newcommand{\bS}{\mathbb S}
\newcommand{\bI}{\mathbb I}
\newcommand{\tbS}{\widetilde{\bS}}
\newcommand{\muh}{h^\alpha}
\newcommand{\hvt}{\Theta}
\newcommand{\hvth}{\Theta_h}
\newcommand{\myTauangle}[1]{\left\langle \mathcal{V}_{\tau, x};\, #1\right\rangle}
\newcommand{\mytangle}[1]{\left\langle \mathcal{V}_{t, x};\, #1\right\rangle}
\newcommand{\myZangle}[1]{\left\langle \mathcal{V}_{0, x}; \,#1\right\rangle}
\begin{document}

\title{Penalty method for the Navier--Stokes--Fourier system with Dirichlet boundary conditions: convergence and error estimates}
\author{
M\'aria Luk\'a\v{c}ov\'a-Medvi\softd ov\'a\thanks{The work of M.L. and Y. Y. was supported by the Deutsche Forschungsgemeinschaft (DFG, German Research Foundation) - Project number 233630050 - TRR 146.
M.L. is grateful to the Gutenberg Research College and Mainz Institute of Multiscale Modelling for supporting her research.
\quad $^{\spadesuit}$The work of B.S. was funded by the
Czech Science Foundation (GA\v CR), Grant Agreement
21--02411S. 
}
\and Bangwei She$^{\spadesuit}$
\and Yuhuan Yuan$^{\dag, *}$
}

\date{}

\maketitle

\vspace{-0.6cm}




\centerline{$^*$Institute of Mathematics, Johannes Gutenberg-University Mainz}
\centerline{Staudingerweg 9, 55 128 Mainz, Germany}
\centerline{lukacova@uni-mainz.de}

\medskip
\centerline{$^\spadesuit$Academy for Multidisciplinary studies, Capital Normal University}
\centerline{ West 3rd Ring North Road 105, 100048 Beijing, P. R. China}
\centerline{bangweishe@cnu.edu.cn}

\medskip
\centerline{$^\dag$School of Mathematics, Nanjing University of Aeronautics and Astronautics}
\centerline{Jiangjun Avenue No. 29, 211106 Nanjing, P. R. China}
\centerline{yuhuanyuan@nuaa.edu.cn}

\begin{abstract}
We study the convergence and error estimates of a finite volume method for the compressible Navier--Stokes--Fourier system with Dirichlet boundary conditions.
Physical fluid domain is typically smooth and needs to be approximated by a polygonal computational domain. This leads to domain-related discretization errors, the so-called variational crimes.
To treat them efficiently we embed the fluid domain into a large enough cubed domain, and propose a finite volume scheme for the corresponding domain-penalized problem.
Under the assumption that the numerical density and temperature are uniformly bounded, we derive the ballistic energy inequality, yielding a priori estimates and the consistency of the penalization finite volume approximations. Further, we show that the numerical solutions converge weakly to a generalized, the so-called dissipative measure-valued, solution of the corresponding Dirichlet problem. If a strong solution exists, we prove that our numerical approximations converge strongly with the rate $1/4$. Additionally, assuming uniform boundedness of the approximate velocities, we obtain global existence of the strong solution.
In this case we prove that the numerical solutions converge strongly to the strong solution with the optimal rate $1/2$.
\end{abstract}

{\bf Keywords:} compressible Navier--Stokes--Fourier equations, convergence, error estimates, finite volume method, penalty method, dissipative measure--valued solution, ballistic energy, relative energy


\section{Introduction}

Motivated by physically relevant problems we consider a compressible, viscous, heat-conductive fluid confined to a smooth bounded domain $\Of \subset \R^d, \ d=2,3,$ with the prescribed data on a boundary. Such flows are governed by an open
Navier--Stokes--Fourier (NSF) system
\begin{subequations}\label{pde}
\begin{align}
\pd_t \vr + \Div (\vr \vu )& = 0, \label{pde_d}
\\
\pd_t (\vr \vu) + \Div (\vr \vu \otimes \vu ) + \Grad p & = \Div \S , \label{pde_m}
\\
\pd_t (\vr e) + \Div ( \vr e\vu) + \Div \vc{q}(\Grad \vt)& = { \S: \Grad \vu } -p \Div \vu \label{pde_e},
\end{align}
that is equipped with the initial data
\begin{equation} \label{pde_ic}
(\vr ,\vu,\vt)(0,\cdot) =(\vr _0, \vu _0,\vt _0)
\end{equation}
and the Dirichlet boundary conditions. In this paper we consider
\begin{equation} \label{pde_bc}
	\vu|_{\pd \Of} = 0, \quad \vt|_{\pd \Of} = \vtB(t,x).
\end{equation}
Here, $\vr, \vu, \vt, p$ and $e$ are the fluid density, velocity, absolute temperature, pressure and internal energy, respectively, $\vtB$ represents a given temperature at the boundary. Further, $\S$ is the viscous stress tensor
\begin{align}
\S= 2\mu \Du + \lambda \Div \vu \I \quad \mbox{with} \quad \Du=\frac{\Grad \vu + \Grad^T \vu}{2}, \ \ \Div \vu = \rm{tr}(\Du),\ \ \mu > 0, \ \ \lambda \geq 0
\end{align}
and $\vc{q}(\Grad \vt)$ is the heat flux given by Fourier's law
\begin{equation}
\vc{q}(\Grad \vt) = - \kappa \Grad \vt, \quad \kappa > 0,
\end{equation}
where the constants $\mu, \lambda $ are the viscosity coefficients, the constant $\kappa$ is the thermal conductivity coefficient, and $\mbox{tr}$ denotes the trace of a matrix.

To close system \eqref{pde} we need the state equations to interrelate the thermodynamic variables.
Hereafter we consider the standard pressure law of a perfect gas
\begin{equation}\label{perfect-gas}
p = (\gamma -1) \vr e, \quad e=c_v\vt,
\end{equation}
\end{subequations}
where $\gamma > 1$ is the adiabatic coefficient and $c_v = \frac1{\gamma-1} > 0 $ is the specific heat at constant volume.
Throughout the paper we suppose that the initial data satisfy
\begin{equation} \label{ic}
 \vr_0 \geq \Un{\vr} > 0,~ \vt_0 > 0, ~ \vr_0 \in L^\infty(\Of), ~\vm_0 \in L^\infty(\Of;\R^d), ~ \vt_0 \in L^\infty(\Of).
\end{equation}

\medskip
It is well-known that the Dirichlet problem \eqref{pde} admits a local strong solution if the initial/boundary data are smooth enough, see \cite{Valli:1982b,Valli:1982a,Valli-Zajaczkowski:1986} and references therein.
Recently, Chaudhuri \cite{Chaudhuri:2022} showed the dissipative measure valued (DMV)--strong uniqueness principle for open NSF system \eqref{pde} with the Dirichlet boundary conditions. More precisely, it means that a strong solution is stable in the class of very weak, the so-called DMV, solutions.
Further, Basari\'c et al.~\cite{BFM} proved conditionally regularity for compressible, viscous, and heat-conductive fluids with the perfect gas state equation. Thus, system \eqref{pde} admits a global strong solution if a global weak solution of the system \eqref{pde} is bounded. These results strongly rely on the existence of global weak or DMV solutions. In this context we note that (unconditional) global existence of a solution to the Navier--Stokes--Fourier system with the perfect gas law is only available in the class of DMV solutions, the corresponding result in the class of weak solutions is still an open problem. We refer a reader to Feireisl, Novotn\'y~\cite{FeiNov}, where global existence of a weak solution was proved for a specific
equation of state, see
also Chaudhuri and Feireisl \cite{Chaudhuri-Feireisl:2022} for the corresponding weak--strong uniqueness principle.

 The main goal of this paper is to show the convergence as well as the error estimates of finite volume (FV) approximations of the NSF system \eqref{pde}--\eqref{ic}. As a consequence, we also prove the global existence of DMV solutions for the NSF system with the Dirichlet boundary conditions, which was not available in literature. We point out that the obtained DMV solutions are thermodynamically consistent, i.e.~they satisfy the second law of thermodynamics, which is expressed by the following entropy inequality
\begin{equation} \label{pde_s}
\partial_t (\vr s ) + \Div (\vr s \vu) + \Div \left( \frac{\vc{q}(\Grad \vt) }{\vt} \right) \geq \frac{1}{\vt} \left( \S: \Du - \frac{\vc{q}(\Grad \vt) \cdot \Grad \vt}{\vt} \right)
\end{equation}
holding in weak sense. Here $s$ denotes the physical entropy for a perfect gas
\begin{equation}\label{eq_s}
s(\vr, \vt)= \log \left(\frac{\vt^{c_v}}{\vr} \right) \qquad \mbox{ for } \vr > 0, \ \vt >0.
\end{equation}

\subsection{Penalized Navier--Stokes--Fourier system on $\tor$}

The penalty approach is a popular modelling tool when the boundary of a physical domain has a complicated
structure and its approximation by polygons is computationally expensive. An efficient way to overcome this difficulty is to transform geometry to penalty terms that arise in the governing equations as additional forcing terms. The latter can be seen as the Lagrange multiplier terms. In this context we refer to the Lagrange multiplier based fictitious domain method \cite{Glowinski-Pan-Periaux:1994,Glowinski-Pan-Periaux:1994a,Hyman:1952} and the immersed boundary method \cite{Peskin:1972,Peskin:2002} that have been applied to incompressible Navier--Stokes equations, see also \cite{angot}. In \cite{Hesthaven1, Hesthaven2} the penalization of boundary conditions was discussed for compressible NSF system in the context of a spectral method. For elliptic boundary problems rigorous error analysis of penalization methods were presented in \cite{Maury:2009, Saito-Zhou:2015,Zhang:2006,Zhou-Saito:2014}.

In our recent paper~\cite{LSY_penalty} we have successfully applied a penalty method for the barotropic Navier--Stokes system. Inspired by \cite{LSY_penalty} our aim is to extend and generalize the penalty method to the NSF system. This is a nontrivial task due to the fact that we need to deal with an additional equation for temperature evolution. Moreover, the Dirichlet boundary condition for temperature poses an additional difficulty. {\em The novelty of our approach} lies in the use of {\em the ballistic energy inequality} in addition to the entropy inequality. They both yield desired a priori estimates that allow us to study the convergence of the penalty method and the existence of a global DMV solution.

Specifically, embedding the fluid domain $\Of$ into a large enough torus $\tor$, see Figure \ref{figD}, a penalized NSF system reads
\begin{subequations}\label{ppde}
\begin{align}
\partial_t \vr + \Div (\vr \vu) &= 0, \label{ppde_d}	
\\
\partial_t (\vr \vu) + \Div (\vr \vu \otimes \vu) + \Grad p(\vr) &= \Div \bS( \Grad \vu) - \frac{ \mathds{1}_{\Os} }{\penl} \vu, \label{ppde_m}
\\
\pd_t (\vr e) + \Div ( \vr e\vu) + \Div \vc{q}(\Grad \vt)& = { \S: \Grad \vu } -p \Div \vu- \frac{ \mathds{1}_{\Os} }{\penl} (\vt - \vtB). \label{ppde_e}
\end{align}
The initial data are given as
\begin{equation}\label{ppde_ic}
\begin{aligned}
&(\tvr,\tvu,\tvt)(0, \cdot) := (\tvr_{0},\tvu_0,\tvt_0) =
\begin{cases}
 (\vr_0^s,0,\vtB) & \mbox{if} \ \vx \in \Os:=\tor \setminus \Of, \\
(\vr_0,\vu_0,\vt_0) &\mbox{if} \ \vx \in \Of,
\end{cases}
\\
& \tvr_{0} > 0, ~ \tvt_0 > 0 \ \ \ \mbox{satisfying the periodic boundary condition}. 
\end{aligned}
\end{equation}
\end{subequations}
The corresponding entropy inequality reads
\begin{align}\label{ppde_s}
\partial_t (\vr s ) + \Div (\vr s \vu) + \Div \left( \frac{\vc{q}(\Grad \vt) }{\vt} \right) \geq \frac{1}{\vt} \left( \S: \Du - \frac{\vc{q}(\Grad \vt) \cdot \Grad \vt}{\vt} - \frac{ \mathds{1}_{\Os} }{\penl} (\vt - \vtB)\right).
\end{align}
Here, $0 < \penl << 1$ is the penalty parameter, $ \mathds{1}_{\Os} (\vx)$ is the characteristic function defined by
 \begin{equation*}
 \mathds{1}_{\Os} (\vx)=
\begin{cases}
1 & \mbox{if} \ \vx \in \Os ,\\
0 & \mbox{if} \ \vx \in \Of
\end{cases}
\end{equation*}
and $\tvr_0$ is an extension of $\vr_0$.
Here and hereafter, we suppose that $\vtB$ which is defined on $\tor$ satisfies:
\begin{align*}
 \vtB \in W^{2, \infty}((0,T) \times \mathbb{T}^d), \quad	\inf \vtB > 0.
\end{align*}

\begin{figure}[!h]
\centering
\begin{tikzpicture}[scale=0.6]
\draw[color=blue, very thick] ( -5,-3.2 ) rectangle ( 5, 3.2); 
\path node at (-4,2) { \cblue \huge $\tor$};
\draw[very thick] (0,0) ellipse (4 and 2.5 );
\path node at (0,0) { \huge $\Of$};
\end{tikzpicture}
\caption{A fluid domain $\Of$ embedded into a torus $\tor$.}
\label{figD}
\end{figure}

\

The rest of the paper is organized as follows. In Section~\ref{sec_scheme} we introduce notations and then present a FV method for the approximation of the penalized problem \eqref{ppde}.
In Section~\ref{sec_sta} and Section~\ref{sec_con} we study the stability and consistency of our numerical method, respectively.
Section~\ref{sec_convergence} and Section~\ref{sec_EE} build the heart of the paper. We present here the main results: the weak (resp. strong) convergence of numerical approximations towards the DMV (resp. strong) solution of the Dirichlet problem \eqref{pde}, cf.~Theorems \ref{THM2} (resp. \ref{THM3}) as well as the error estimates between the numerical approximations and the strong solution of the Dirichlet problem,
cf.~Theorems \ref{thm_EEB} and \ref{thm_EEB_1}. Technical details of proofs are presented in Appendices~A. -- D.

\section{Numerical scheme}\label{sec_scheme}
 In this section we generalize the FV method proposed in \cite{FLMS_FVNSF,LSY_penalty} to approximate the penalized problem \eqref{ppde}.

\subsection{Notations}
We start  by introducing notation $a \aleq b$. It means that there exists a generic positive constant $C$ independent of the time step $\TS$, mesh size $h$ and penalty parameter $\penl$ such that $a\leq C\cdot b$. Further we write $a \approx b$ if $a \aleq b$, $b \aleq a$, $a,b \in \R$.
\paragraph{\bf Mesh.} Let $\grid$ be a uniform structured (square for $d=2$ or cuboid for $d=3$) mesh of $\tor$ with $h\in(0,1)$ being the mesh size.
We denote by $\faces$ the set of all faces of $\grid$ and by $\facesi$, $i=1,\dots, d$, the set of all faces that are orthogonal to $\ve_i$ -- the basis vector of the canonical system.
Moreover, we denote by $\facesK$ the set of all faces of a generic element $K\in\grid$. Then, we write $\sigma= K|L$ if $\sigma \in \faces$ is the common face of neighbouring elements $K$ and $L$. Further, we denote by $\vx_K$ and $|K|=h^d$ (resp. $\vx_\sigma$ and $|\sigma|=h^{d-1}$) the center and the Lebesgue measure of an element $K\in\grid$ (resp. a face $\sigma \in \faces$), respectively.

\begin{Definition}[Artificial splitting of the mesh]
\label{def_ES2}
\ \hfill
\begin{itemize}
\item We split the mesh $\grid$ into two parts: the part consisting of all elements inside the physical domain $\Of$ (denoted as $\Ofh$) and its complement (denoted as $\Osh$):
\begin{align*}
\Ofh = \left\{ K ~ \big| ~ K\subset \Of \right\} \quad \mbox{and}\quad \Osh = \grid \setminus \Ofh,
\end{align*}
cf. Figure \ref{fig:domain} $(a)$.

\begin{figure}[htbp]
	\centering
	\begin{subfigure}{0.33\textwidth}
		\includegraphics[width=\textwidth]{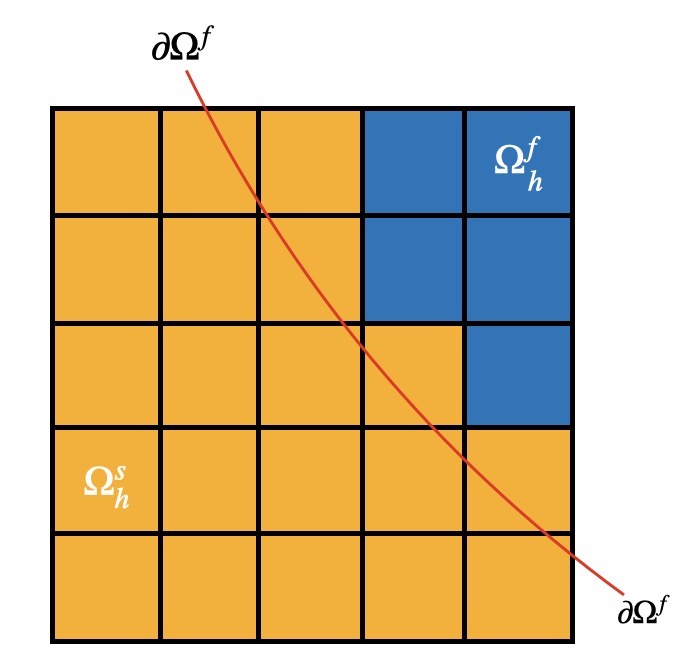}
		\caption{ }
	\end{subfigure}	\quad
	\begin{subfigure}{0.32\textwidth}
		\includegraphics[width=\textwidth]{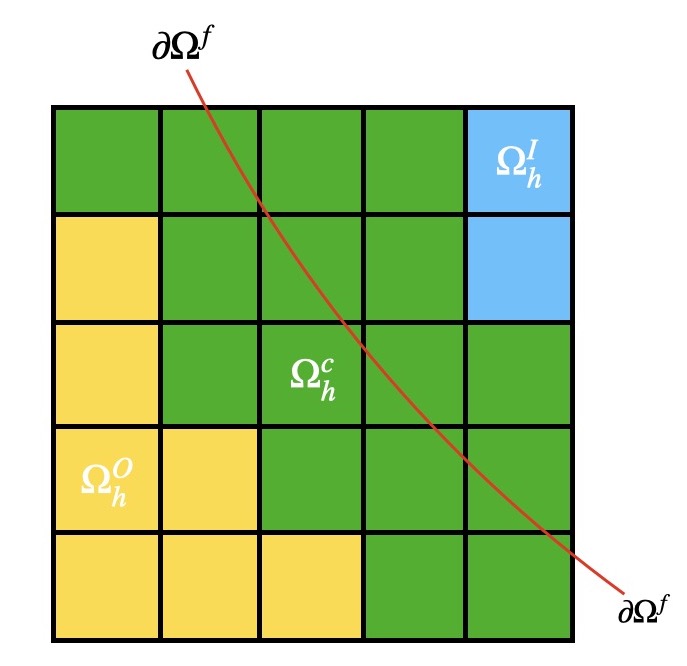}
		\caption{ }
	\end{subfigure}	
	\caption{\small{ Zoom-in of domain splitting. }}\label{fig:domain} 
\end{figure}

\item We split the mesh $\grid$ into three pieces. $\Och$ denotes the area containing the neighbourhood of the fluid boundary $\pd \Of$
\begin{align*}
\Och = \left\{ K \mid \cup_{L\cap K\neq \emptyset} L \cap \partial \Of \neq \emptyset \right\}.
\end{align*}
The inner domain $\OIh$ and the outer domain $\OOh$ are given as
\[\OIh \colon = \Of \setminus \Och \quad \mbox{ and }\quad \OOh \colon =\Os \setminus \Och ,\]
see Figure \ref{fig:domain} $(b)$. We note that this special spitting shall be useful in the analysis.
\end{itemize}
\end{Definition}

\begin{Remark}\label{rmk}
According to the definitions of splittings, we have
\begin{equation}\label{EXTE0}
 \OIh \subset \Ofh \subset \Of,\quad
\OOh \subset \Os \subset \Os_h , \quad
 \Os_h \setminus \Os \subset \Os_h \setminus \OOh \subset \Och.
\end{equation}
It is obvious that $\dist(\partial \Of, \Ofh) \aleq h$, where the constant $C$ depends on the geometry of the fluid domain $\Of$.
Furthermore, it holds that
\begin{equation}\label{EXTD}
 |\Of \setminus \Ofh| \leq \abs{\Och} \aleq h,\quad \abs{\Osh \setminus \Os} \leq \abs{\Os_h \setminus \OOh} \leq \abs{\Och} \aleq h.
\end{equation}
\end{Remark}

\paragraph{\bf Dual mesh.} For any $\sigma=K|L$, we define a dual cell $D_\sigma := D_{\sigma,K} \cup D_{\sigma,L}$, where $D_{\sigma,K}$ is defined as
\begin{align*}
D_{\sigma,K} = \left\{ x \in K \mid x_i\in\co{(x_K)^{(i)}}{(x_\sigma)^{(i)}}\right\} \mbox{ for any } \sigma \in \facesi, \; i=1,\ldots,d,
\end{align*}
where the superscript $(i)$ denotes the $i$-th component of a vector and
\begin{equation}\label{eqco}
\co{A}{B} \equiv [ \min\{A,B\} , \max\{A,B\}].
\end{equation}
We refer to Figure~\ref{fig:mesh} for a two-dimensional illustration of a dual cell.

\begin{figure}[!h]
\centering
\begin{tikzpicture}[scale=1.0]
\draw[-,very thick](0,-2)--(5,-2)--(5,2)--(0,2)--(0,-2)--(-5,-2)--(-5,2)--(0,2);

\draw[-,very thick, green=90!, pattern=north east lines, pattern color=green!30] (0,-2)--(2.5,-2)--(2.5,2)--(0,2)--(0,-2);
\draw[-,very thick, blue=90!, pattern= north west lines, pattern color=blue!30] (0,-2)--(0,2)--(-2.5,2)--(-2.5,-2)--(0,-2);

\path node at (-3.5,0) { $K$};
\path node at (3.5,0) { $L$};
\path node at (-2.5,0) {$ \bullet$};
\path node at (-2.8,-0.3) {$ \vx_K$};
\path node at (2.5,0) {$\bullet$};
\path node at (2.7,-0.3) {$ \vx_L$};
\path node at (0,0) {$\bullet$};
\path node at (0.3,-0.3) {$ \vx_\sigma$};

\path (-0.4,0.8) node[rotate=90] { $\sigma=K|L$};
 \path (-1.5,1.4) node[] { $D_{\sigma,K}$};
 \path (1.5,1.4) node[] { $D_{\sigma,L}$};
 \end{tikzpicture}
\caption{Dual mesh $D_\sigma = D_{\sigma,K} \cup D_{\sigma,L}$}
 \label{fig:mesh}
\end{figure}
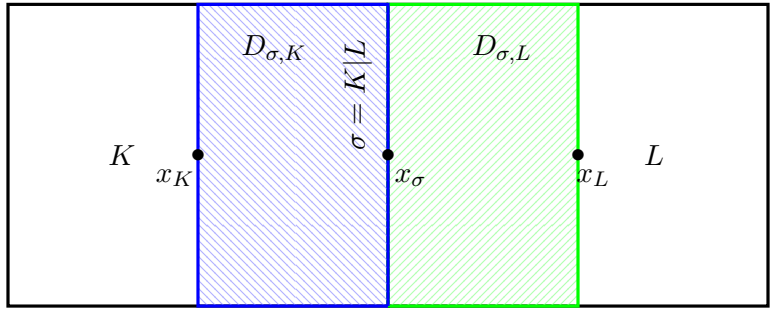

\paragraph{\bf Function space.}
We denote by $Q_h$ the set of all piecewise constant functions on the grid $\grid$ and $Q_h^m$ as the corresponding $m$-dimensional function space, $m \in \mathbb{N}$. We associate $Q_h$ with the projection operator
\begin{align*}
 \Pim \phi (x) = \sum_{K \in \grid} \frac{\mathds{1}_{K}(x)}{|K|} \int_K \phi \dx \quad
 \mbox{for any } \phi\in L^1(\tor).
\end{align*}
Moreover, for $v \in Q_h$ we introduce the average and jump operators at the interface:
\[
\avs{v}(x) = \frac{v^{\rm in}(x) + v^{\rm out}(x) }{2},\ \ \ \
\jump{ v }(x) = v^{\rm out}(x) - v^{\rm in}(x), \]
where
\[
v^{\rm out}(x) = \lim_{\delta \to 0+} v(x + \delta \vc{n}),\ \ \ \
v^{\rm in}(x) = \lim_{\delta \to 0+} v(x - \delta \vc{n})
\]
whenever $x \in \sigma \in \faces$ and $\vc{n}$ is the outer normal vector to $\sigma$.
Moreover, we define the upwind and downwind quantities of $v \in Q_h$ associated to the velocity field $\vu\in Q_h^d$ at a generic face $\sigma$
\begin{equation*}
(v^{\rm up}, v^{\rm down}) =
\begin{cases}
( v^{\rm in},v^{\rm out}) & \mbox{if} \ \avs{\bm{u}} \cdot \vc{n} \geq 0, \\
(v^{\rm out},v^{\rm in}) & \mbox{if} \ \avs{\bm{u}} \cdot \vc{n} < 0.
\end{cases}
\end{equation*}

\paragraph{Discrete differential operators.}
We define $\Gradh$, $\Divh$ and $\bD_h$ on the mesh $\mathcal{T}$ as follows:
\begin{equation*}
\begin{aligned}
\left( \Gradh r_h\right)_K &= \frac{|\sigma|}{|K|} \sum_{\sigma \in \facesK} \avs{r_h} \vc{n}, \quad \quad&\Gradh r_h(x) &= \sum_{K \in \grid} \left( \Gradh r_h\right)_K \mathds{1}_K{(x)},
 \quad \quad r_h \in Q_h,
\\
 \left( \Divh \bfv_h \right)_K &=
 \frac{|\sigma|}{|K|} \sum_{\sigma \in \facesK} \avs{\bfv_h} \cdot \vc{n} , & \Divh \bfv_h(x) &= \sum_{K\in\grid} \left( \Divh \bfv_h \right)_K \mathds{1}_K{(x)} , 
 \\
 \Gradh \bfv_h & = \left( \Gradh v_{1,h}, \dots, \Gradh v_{d,h}\right)^T, &
\bD_h (\bfv_h) &= \frac{\Gradh \bfv_h +\Gradh^T \bfv_h}{2}, 
\quad \quad \bfv_h= (v_{1,h},\ldots,v_{d,h})^T \in Q_h^d.
\end{aligned}
\end{equation*}
It holds $\Divh \bfv_h = \mbox{tr} (\Gradh \bfv_h) = \mbox{tr} (\bD_h (\bfv_h))$.

Furthermore, for any $r_h \in Q_h$ we define another discrete gradient operator $ \Gradd$ on the dual mesh $D_{\sigma}$
\begin{equation*}
 \Gradd r_h(x) = \sum_{\sigma\in\faces} \left(\Gradd r_h \right)_{\sigma}\mathds{1}_{D_\sigma}{(x)}, \quad \left(\Gradd r_h\right) _{\sigma} =\frac{\jump{r_h} }{ h } \vc{n}
\end{equation*}
and the discrete Laplace operator $\Delta_h$ on the mesh $\mathcal{T}$
\begin{equation*}
\left(\Delta_h r_h\right)_K = \frac{|\sigma|}{|K|} \sum_{\sigma \in \facesK} \frac{ \jump{r_h} }{h}, \quad \Delta_h r_h(x) = \sum_{K \in \grid} \left(\Delta_h r_h\right)_K \mathds{1}_K{(x)}.
\end{equation*}

\paragraph{Time discretization.}
Given a time step $\TS>0$ we divide the time interval $[0,T]$ into $N_T=T/\TS$ uniform parts, and denote $t^k= k\TS.$
Moreover, we denote by $v_h^k \in Q_h$ a piecewise constant approximation of the function $v$ at time $t^k$.
Further, we denote by $v_h(t)\in L_{\TS}(0,T; Q_h)$ the piecewise constant approximation in time, such that
\begin{align*}
 v_h(t, \cdot) =v_h^0 \ \mbox{ for } \ t < \Delta t, \quad v_h(t,\cdot)=v_h^k \ \mbox{ for } \ t\in [k\TS,(k+1)\TS), \ \ k=1,\dots, N_T.
\end{align*}
Furthermore, we define the discrete time derivative by a backward difference formula
\[
 D_t v_h(t) = \frac{v_h (t) - v_h^\triangleleft}{\TS} \quad \mbox{with} \quad v_h^\triangleleft = v_h (t - \TS).
\]

\subsection{Finite volume method}
We are now ready to propose an FV method with an upwind numerical flux.
\begin{Definition}[Finite volume method]
Let the initial data \eqref{pde_ic} be extended by $(\tvr_0, \tvu_0, \tvt_0)$ as in \eqref{ppde_ic} and let $(\vrh^0,\vuh^0, \vth^0) :=\Pim (\tvr_0, \tvu_0, \tvt_0)$ and $\vthB = \Pim \vtB$.

 \vspace{0.1cm}

We say that
$(\vrh^{\penl},\vuh^{\penl}, \vth^{\penl}) \in L_{\TS}(0,T; Q_h^{d+2})$ is a finite volume approximation of the penalized problem \eqref{ppde} if it solves the following system of algebraic equations
\begin{subequations}\label{VFV}
\begin{align}\label{VFV_D}
&\intTd{ D_t \vrh^{\penl} \phi_h} - \intfaces{ \Fup (\vrh^{\penl} ,\vuh^{\penl} )
\jump{\phi_h} } = 0, \hspace{4.5cm} \mbox{for all}\ \phi_h \in Q_h, \\ \label{VFV_M}
&\intTd{ D_t (\vrh^{\penl} \vuh^{\penl} ) \cdot \bfphi_h } - \intfaces{ \Fup (\vrh^{\penl} \vuh^{\penl} ,\vuh^{\penl} ) \cdot \jump{\bfphi_h} }
+ \frac{1}{\penl}\intOsh{\vuh^{\penl} \cdot \bfphi_h}
 \br
 &\hspace{3cm} + \intTd{ (\bS_h^{\penl} -p_h^{\penl} \bI ) : \bD_h \bfphi_h }
= 0,
\hspace{3cm} \mbox{for all } \bfphi_h \in Q_h^d,
\\ \label{VFV_E}
&c_v\intTd{ D_t (\vrh^{\penl} \vth^{\penl} ) \phi_h } - c_v\intfaces{ \Fup (\vrh^{\penl} \vth^{\penl} ,\vuh )\jump{\phi_h} }
+\intTd{ \kappa \Gradd \vth^{\penl} \cdot \Gradd \phi_h}
 \br
 &\hspace{1cm} + \frac{1}{\penl}\intOsh{ (\vth^{\penl}-\vthB) \phi_h}
= \intTd{ (\bS_h^{\penl} -p_h^{\penl} \bI ) : \Gradh \vuh^{\penl} \phi_h}, \quad \hspace{1.6cm}\mbox{for all}\ \phi_h \in Q_h .
 \end{align}
\end{subequations}
Here $\bS_h^\penl = 2\mu \bD_h \vuh^{\penl} + \lambda \Divh\vuh^{\penl} \bI $ and the numerical flux $\Fup (r_h,\vuh)$ is taken as the diffusive upwind flux
\begin{equation*}
\Fup (r_h,\vuh)
={\Up}[r_h, \vuh] - \muh \jump{ r_h }
\quad \mbox{ with } \Up [r_h, \bm{u}_h] = r_h^{\rm up} \avs{\vu_h} \cdot \vn.
\end{equation*}
 Note that $ \alpha >-1$ is the artificial viscosity parameter.
\end{Definition}

\begin{Remark}
Hereafter, we shall omit the superscript $\penl$ for simplicity if there is no confusion, e.g.,  writing $(\vrh,\vuh,\vth)$ instead of $(\vrh^{\penl},\vuh^{\penl},\vth^{\penl})$.
\end{Remark}

\section{Stability}\label{sec_sta}
Following the Lax equivalence principle \cite{FeLMMiSh} we need to show the stability and consistency of a numerical method in order to prove its convergence. We start with studying the stability of the FV method \eqref{VFV}.
\subsection{Positivity of density and temperature}
\begin{Lemma}[Positivity of density and temperature] \label{lem_p1}
Let $(\vrh ,\vuh ,\vth )$ be a numerical solution of the finite volume method \eqref{VFV}. Then it holds
\begin{align*}
\vrh(t) > 0, \quad \vth(t) > 0 \quad \mbox{for all } \ t\in(0,T).
\end{align*}
Note that $\vrh^0 > 0$ and $\vth^0 > 0$ follow from the positivity of the data that $\tvr^0 > 0$ and $\tvt^0 > 0$.
\end{Lemma}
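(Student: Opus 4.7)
My plan is to establish both positivity claims by induction on the time level, with the base case following from the strict positivity of $\tvr_0$ and $\tvt_0$. At a generic step with $\vrh^\triangleleft, \vth^\triangleleft > 0$, I would test \eqref{VFV_D} and \eqref{VFV_E} cell-by-cell with the indicator $\phi_h = \mathds{1}_K$, $K \in \grid$, and then apply a discrete minimum principle. For the density, writing $v_{K,\sigma} := \avs{\vuh}\cdot\vn_{K,\sigma}$ and $L = L(\sigma)$ for the neighbour of $K$ across $\sigma$, the upwind splitting recasts the local form of \eqref{VFV_D} as
$$\Big[|K| + \Delta t \sum_{\sigma \in \facesK} |\sigma|\bigl((v_{K,\sigma})^+ + h^\alpha\bigr)\Big] \vrh|_K - \Delta t \sum_{\sigma \in \facesK} |\sigma|\bigl((v_{K,\sigma})^- + h^\alpha\bigr)\vrh|_L = |K|\vrh^\triangleleft|_K > 0.$$
Letting $K^\star$ realise $\min_K \vrh|_K$ and using $\vrh|_L \geq \vrh|_{K^\star}$, I would rearrange the above into $|K^\star|(1 + \Delta t\,\Divh\vuh|_{K^\star})\vrh|_{K^\star} \geq |K^\star|\vrh^\triangleleft|_{K^\star} > 0$, which under the mild CFL-type restriction $1 + \Delta t\,\Divh\vuh|_{K^\star} > 0$ forces $\vrh|_{K^\star} > 0$, and hence $\vrh > 0$ throughout.

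\textbf{Temperature.} For \eqref{VFV_E}, I plan to subtract $c_v\vth|_K$ times \eqref{VFV_D} (both tested with $\mathds{1}_K$). The discrete chain-rule identity $D_t(\vrh\vth)|_K - \vth|_K D_t\vrh|_K = \vrh^\triangleleft|_K D_t\vth|_K$ together with the upwind product rule $(\vrh\vth)^{\rm up} = \vrh^{\rm up}\vth^{\rm up}$ collapse the convective contribution to $c_v \sum_{\sigma \in \facesK} |\sigma|\vrh|_L[(v_{K,\sigma})^- + h^\alpha](\vth|_K - \vth|_L)$. Using $p_h = (\gamma-1)c_v\vrh\vth$ to absorb the pressure work $-p_h\Divh\vuh$ into the left-hand side, the resulting equation at $K$ can be cast in the M-matrix form
$$A_K\,\vth|_K = B_K\,\vth^\triangleleft|_K + \sum_{L \sim K} C_{K,L}\,\vth|_L + D_K\,\vtB|_K + E_K,$$
with $B_K, C_{K,L}, D_K \geq 0$ (using $\vrh|_L, \vrh^\triangleleft|_K > 0$ from the density step and $\vtB > 0$) and $E_K = |K|\bigl(2\mu|\bD_h\vuh|^2 + \lambda(\Divh\vuh)^2\bigr)|_K \geq 0$ by the dissipativity of the discrete Newtonian stress. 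A second application of the discrete minimum principle then yields $\vth > 0$.

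\textbf{Main obstacle.} The delicate point will be ensuring $A_K > 0$ after the pressure-work absorption, since the contribution $(\gamma-1)c_v|K|\vrh|_K\Divh\vuh|_K$ is sign-indefinite and, when $\Divh\vuh|_K < 0$, opposes the time-derivative coefficient $c_v|K|\vrh^\triangleleft|_K/\Delta t$. To handle this case I would invoke the discrete continuity identity
$$\vrh|_K\Divh\vuh|_K = -D_t\vrh|_K + \frac{1}{|K|}\sum_{\sigma \in \facesK} |\sigma|\bigl[(v_{K,\sigma})^- + h^\alpha\bigr]\bigl(\vrh|_L - \vrh|_K\bigr),$$
which transfers the offending term into the pre-existing advective--diffusive coefficients, so that strict positivity of $A_K$ follows under the same mild CFL-type restriction underlying the density step.
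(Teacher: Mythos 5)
Your route (cell-wise discrete minimum principle / M-matrix structure) is genuinely different from the paper's, which never invokes a maximum principle: the paper proves nonnegativity by renormalization, taking $B(\vr)=\vr\log\vr$-type arguments for the density (quoting an earlier result) and, for the temperature, inserting the convex truncation $\chi(\vt)=\tfrac12([\vt]^-)^2$ into the renormalized internal energy equation after extending the pressure by $p=0$ for $\vt\le 0$. The point of that device is that $\chi'(\vth)\,p_h\equiv 0$, $\chi'\le 0$, $\chi'(\vt)(\vt-\vtB)\ge 0$ and $\jump{\vth}\jump{\chi'(\vth)}\ge 0$, so \emph{every} right-hand-side term is signed by convexity/monotonicity and one gets $\intTd{D_t(\vrh\chi(\vth))}\le 0$ unconditionally, whence $\vth\ge 0$; strict positivity then follows from a pointwise contradiction in the cell equation at a cell where $\vth=0$. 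Your proposal, by contrast, does not prove the lemma as stated, for two reasons.

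First, the statement is unconditional, but your minimum-principle step hinges on $1+\TS\,\Divh\vuh|_{K^\star}>0$. Since the scheme is fully implicit, $\vuh$ is part of the unknown at the new time level and no bound on $\Divh\vuh$ is available at this stage (the uniform bounds of Lemma 3.6 come \emph{after}, and rely on, positivity), so this ``mild CFL-type restriction'' can neither be assumed nor verified; your argument in fact leaves open the alternative $\vrh|_{K^\star}<0$ with $1+\TS\,\Divh\vuh|_{K^\star}<0$. Second, and more seriously, the temperature step does not close. After substituting your continuity identity into the pressure-work term $-(\gamma-1)c_v|K|\,\vrh|_K\vth|_K\Divh\vuh|_K$, the diagonal coefficient becomes (up to the conductive and penalty parts)
\begin{equation*}
A_K=\frac{c_v|K|}{\TS}\Bigl[\gamma\,\vrh^\triangleleft|_K-(\gamma-1)\vrh|_K\Bigr]
+c_v\sum_{\sigma\in\facesK}|\sigma|\bigl[(v_{K,\sigma})^-+h^\alpha\bigr]\Bigl[\gamma\,\vrh|_L-(\gamma-1)\vrh|_K\Bigr]+\dots,
\end{equation*}
and neither bracket has a sign without a priori control of the ratios $\vrh|_K/\vrh^\triangleleft|_K$ and $\vrh|_K/\vrh|_L$; the substitution has merely traded the sign-indefinite term $\vrh|_K\Divh\vuh|_K$ for the equally sign-indefinite $-D_t\vrh|_K$ plus a genuinely negative contribution $-(\gamma-1)c_v\sum_\sigma|\sigma|[(v_{K,\sigma})^-+h^\alpha]\vrh|_K$. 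So the claim that $A_K>0$ ``follows under the same restriction'' is unsubstantiated. The paper's truncation trick avoids this entirely because the offending term $-p_h\Divh\vuh$ is multiplied by $\chi'(\vth)$, which vanishes wherever $\vth>0$ and kills $p_h$ (via the extension $p=0$ for $\vt\le0$) wherever $\vth\le0$.
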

The positivity of the density and temperature can be derived from the so-called renormalized equations, see Appendix \ref{app-positivity}.

\subsection{Energy balance}
\begin{Lemma}[Energy balance]\label{thm_energy_stability}
Let $(\vrh ,\vuh ,\vth )$ be a numerical solution of the finite volume method \eqref{VFV}. Then it holds
\begin{align}\label{energy_stability}
 D_t \intTd{ \left(\frac{1}{2} \vrh |\vuh |^2 + { c_v \vrh \vth } \right) }
 +\frac{1}{\penl} \intOsh{ \left( |\vuh|^2 + (\vth-\vthB) \right) } = - D_{\rm E},
 \end{align}
where $D_{\rm E} \geq 0$ is the numerical dissipation given by
\begin{align}\label{energy_dissipation}
D_{\rm E} = \frac{\TS}{2} \intTd{ \vrh^\triangleleft|D_t \vuh |^2 } + h^\alpha \intfaces{ \avs{ \vrh } \abs{\jump{\vuh}}^2 }
+ \frac12 \intfaces{ \vrh^{\rm up} |\avs{\vuh } \cdot \vc{n} | \abs{\jump{ \vuh }}^2 }.
\end{align}
\end{Lemma}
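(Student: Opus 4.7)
The strategy is the classical kinetic + internal energy splitting, adapted to the upwind finite volume discretization. First I would test the discrete momentum balance \eqref{VFV_M} with $\bfphi_h = \vuh$, and separately test the discrete continuity equation \eqref{VFV_D} with $\phi_h = \tfrac12 |\vuh|^2$; subtracting the latter from the former rewrites the inertial terms in terms of the kinetic energy $\tfrac12 \vrh |\vuh|^2$. For the time derivative I would use the standard discrete product identity
\begin{equation*}
D_t(\vrh \vuh)\cdot \vuh - \tfrac12 |\vuh|^2 D_t \vrh
 \;=\; D_t\!\left(\tfrac12 \vrh |\vuh|^2\right) + \tfrac{\TS}{2}\,\vrh^{\triangleleft} |D_t \vuh|^2,
\end{equation*}
which yields the first dissipative contribution in \eqref{energy_dissipation}. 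The penalty term is handled trivially since $\bfphi_h = \vuh$ gives directly $\tfrac{1}{\penl}\int_{\Osh} |\vuh|^2$.

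Next I would treat the convective flux combination
\begin{equation*}
\int_{\faces} F_h^{\alpha}(\vrh,\vuh)\,\jump{\tfrac12|\vuh|^2} - \int_{\faces} F_h^{\alpha}(\vrh\vuh,\vuh)\cdot \jump{\vuh}.
\end{equation*}
For the upwind part I would use the algebraic identities $\jump{\tfrac12|\vuh|^2} = \avs{\vuh}\cdot \jump{\vuh}$ and $\avs{\vuh} - \vuh^{\rm up} = \tfrac12 \mathrm{sgn}(\avs{\vuh}\cdot \vc n)\, \jump{\vuh}$, which produce exactly the third term $\tfrac12 \vrh^{\rm up} |\avs{\vuh}\cdot \vc n|\, |\jump{\vuh}|^2$ of $D_{\rm E}$. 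For the $\muh$ artificial viscosity part I would use $\jump{\vrh \vuh} = \avs{\vrh}\jump{\vuh} + \jump{\vrh}\avs{\vuh}$, which makes the $\jump{\vrh}$ contribution cancel and leaves $\muh \avs{\vrh}|\jump{\vuh}|^2$, the second dissipative term. The result of this first block of computations is a discrete kinetic energy identity containing the term $\int_{\tor} (\bS_h - p_h \bI): \bD_h \vuh$.

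The second block is the internal energy balance: I would test \eqref{VFV_E} with $\phi_h \equiv 1$. Since $\jump{1}=0$ and $\Gradd 1 = 0$, the convective and diffusive flux terms vanish, so that
\begin{equation*}
c_v \,D_t \!\int_{\tor} \vrh \vth + \tfrac{1}{\penl}\!\int_{\Osh} (\vth - \vthB) \;=\; \int_{\tor} (\bS_h - p_h \bI): \Gradh \vuh.
\end{equation*}
Since $\bS_h = 2\mu \bD_h\vuh + \lambda \Divh\vuh\,\bI$ and $p_h \bI$ are symmetric, $(\bS_h - p_h\bI):\Gradh \vuh = (\bS_h - p_h\bI):\bD_h \vuh$, so adding this to the kinetic energy identity the stress/pressure work terms cancel exactly and the two penalty contributions combine, yielding \eqref{energy_stability}--\eqref{energy_dissipation}. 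The sign $D_{\rm E}\ge 0$ is then manifest from the definition.

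\textbf{Main obstacle.} The only nontrivial bookkeeping lies in the convective flux manipulation: one must carefully handle the case split in the definition of the upwind value $\vuh^{\rm up}$ in order to obtain the sign-invariant form $\tfrac12 \vrh^{\rm up}|\avs{\vuh}\cdot \vc n|\,|\jump{\vuh}|^2$, and then correctly cancel the $\muh \jump{\vrh}\jump{\tfrac12|\vuh|^2}$ contribution using $\jump{\vrh\vuh} = \avs{\vrh}\jump{\vuh} + \jump{\vrh}\avs{\vuh}$. Everything else is essentially a discrete analogue of the standard continuous derivation, with the penalty terms entering harmlessly as additional forcing.
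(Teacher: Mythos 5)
Your proposal is correct and follows essentially the same route as the paper: the paper likewise combines a discrete kinetic energy balance (which it simply cites from \cite{FLMS_FVNS} rather than re-deriving via the $\bfphi_h=\vuh$, $\phi_h=\tfrac12|\vuh|^2$ testing you spell out) with the internal energy equation tested by $\phi_h=1$, and sums the two so that the $(\bS_h-p_h\bI):\Gradh\vuh$ work terms cancel. Your explicit flux manipulations and the discrete product rule are the standard ingredients behind that cited identity, so the argument is complete.
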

\begin{proof}
We start by recalling the kinetic energy balance, cf.~\cite[equation (3.4)]{FLMS_FVNS},
\begin{multline*} 
 D_t \intTd{ \frac{1}{2} \vrh |\vuh |^2 }
+ \intTd{ (\bS_h- p_h\I): \Gradh \vuh } +\frac{1}{\penl} \intOsh{|\vuh|^2}
\\
+ h^\alpha \intfaces{ \avs{ \vrh } \abs{\jump{\vuh}}^2 }
+ \frac{\TS}{2} \intTd{ \vrh^\triangleleft|D_t \vuh |^2 }
+ \frac12 \intfaces{ \vrh ^{\rm up} |\avs{\vuh } \cdot \vc{n} | \abs{\jump{ \vuh }}^2 } =0.
\end{multline*}
Setting $\phi_h=1$ in \eqref{VFV_E} we get
\[
D_t \intTd{ c_v\vrh \vth }+\frac{1}{\penl} \intOsh{(\vth-\vthB)}
= \intTd{(\bS_h- p_h\I): \Gradh \vuh}.
\]
Summing up the above two equations we finish the proof.
\end{proof}

\subsection{Entropy balance}
\begin{Lemma}[Entropy balance]\label{THENT}
Let $(\vrh ,\vuh ,\vth )$ be a numerical solution of the finite volume method \eqref{VFV} and $\phi_h\in Q_h$. Then it holds
\begin{multline}\label{eq_entropy_stability}
 \intTd{ D_t \left(\vrh s_h \right) \phi_h } -
\intfaces{ \Up(\vrh s_h , \vuh ) \jump{\phi_h} }
 + \frac{1}{\penl} \intOsh{(\vth -\vthB)\frac{\phi_h}{\vth}}
\\
- \intTd{ \frac{\phi_h}{\vth } \difuh } + \intTd{\kappa \Gradd\vth \cdot \Gradd\!\!\left(\frac{\phi_h}{\vth }\right)}
 = D_s(\phi_h) + R_{s}(\phi_h),
\end{multline}
where $\bS_h=2\mu | \Dhuh|^2 + \lambda |\Divh \vuh|^2$, $D_s(\phi_h) = \sum_{i=1}^3 D_{s,i}(\phi_h)$ and $R_{s}(\phi_h)$ are given by
\begin{equation}\label{entropy_dissipation}
\begin{aligned}
D_{s,1}(\phi_h) := & \ \TS \intTd{ \left( \frac{ | D_t \vr _h |^2 }{2 \xi _{\vr,h} } +\frac{c_v \vrh^\triangleleft |D_t \vth |^2 }{2 |\xi_{\vt,h} |^2 } \right) \phi_h},
\\
 D_{s,2}(\phi_h) := & \ \frac12 \intfaces{ \phi_h^{\rm down} \abs{ \avs{\vuh}\cdot \vc{n} } \jump{ (\vrh, p_h) } \cdot \nabla_{(\vr,p)}^2 (-\vr s)|_{\vv_1^*}\cdot \jump{ (\vrh, p_h) } },
\\
D_{s,3}(\phi_h) :=
 & \ h^\alpha \intfaces{ \avs{\phi_h} \jump{ (\vrh, p_h) } \cdot \nabla_{(\vr,p)}^2 (-\vr s)|_{\vv_2^*}\cdot \jump{ (\vrh, p_h) }},
 \\
R_{s}(\phi_h) := & \ h^\alpha \intfaces{ \jump{\phi_h} \cdot \bigg( \avs{ \nabla_\vr(-\vrh s_h) } \jump{ \vrh}
	+ \avs{\nabla_p(-\vrh s_h)} \jump{ p_h}\bigg)}
\end{aligned}
\end{equation}
with
$ \xi _{\vr,h}\in \co{\vrh^\triangleleft}{\vrh},\xi _{\vt,h}\in \co{\vth^\triangleleft}{\vth}, \vv_1^*, \vv_2^* \in \co{(\vrh^{\rm in}, p_h^{\rm in})}{(\vrh^{\rm out}, p_h^{\rm out})}$, and $\co{\cdot}{\cdot}$ given in \eqref{eqco}.
Moreover, it holds that $D_s(\phi_h) \geq 0$ for any $ \phi_h \geq 0$.
\end{Lemma}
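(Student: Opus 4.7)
The plan is to derive the discrete entropy balance by applying a discrete chain rule to the scheme \eqref{VFV}, viewing the entropy $-\vr s$ as a strictly convex function of the conservative-type variables $(\vr, p)$ where $p_h = (\gamma-1)c_v \vrh \vth = \vrh \vth$. The Taylor remainders produced by this manipulation will be exactly the non-negative terms $D_{s,1}, D_{s,2}, D_{s,3}$, while the errors created by the upwind-correction viscosity $h^\alpha$ will give the sign-indefinite remainder $R_s$.

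\textbf{Step 1: discrete pressure equation.} First I would rewrite the internal energy equation \eqref{VFV_E} as a discrete balance for $p_h$ by using $p_h = (\gamma-1) c_v \vrh \vth$. This produces an equation of the form
\begin{equation*}
\intTd{ D_t p_h \, \psi_h } - (\gamma-1)\intfaces{\Up(p_h,\vuh)\jump{\psi_h}} + (\gamma-1)\intTd{\kappa\,\Gradd\vth \cdot \Gradd\psi_h} + \tfrac{\gamma-1}{\penl}\intOsh{(\vth-\vthB)\psi_h} = (\gamma-1)\intTd{(\bS_h - p_h\I):\Gradh\vuh \, \psi_h},
\end{equation*}
for every $\psi_h \in Q_h$. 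This, together with \eqref{VFV_D}, will be the backbone.

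\textbf{Step 2: choice of multipliers and chain rule in time.} The entropy flux in conservative variables satisfies $\partial_\vr(-\vr s)|_{(\vrh,p_h)} D_t \vrh + \partial_p(-\vr s)|_{(\vrh,p_h)} D_t p_h = D_t(-\vrh s_h) + \mathcal{R}^t$, where $\mathcal{R}^t$ is a second-order Taylor remainder. Testing \eqref{VFV_D} with $\phi_h\,\partial_\vr(-\vr s)|_{(\vrh,p_h)}$, and the pressure equation of Step~1 with $\phi_h\,(\gamma-1)^{-1}\partial_p(-\vr s)|_{(\vrh,p_h)}$, and summing, the time-derivative term rearranges into $\intTd{D_t(\vrh s_h)\phi_h}$ up to the Taylor error. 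A direct computation using the specific form $s = \log(\vt^{c_v}/\vr)$ identifies this remainder, after localization between $\vrh^\triangleleft$ and $\vrh$ (resp.~$\vth^\triangleleft$ and $\vth$), with the first dissipation term $D_{s,1}(\phi_h)$; positivity for $\phi_h \geq 0$ follows from convexity of $-\vr s$ in $(\vr,p)$.

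\textbf{Step 3: convective terms and upwind dissipation.} Applying the identity $\Up(r_h,\vuh)\jump{\psi_h} = $ (conservative flux) $+ \tfrac12|\avs{\vuh}\cdot\vn|\jump{r_h}\jump{\psi_h}$ to both the density and the pressure convective fluxes, and then summing with the entropy multipliers as in Step~2, I obtain the conservative entropy flux $\intfaces{\Up(\vrh s_h,\vuh)\jump{\phi_h}}$ plus a remainder. A second-order Taylor expansion of $-\vr s$ between $(\vrh^{\rm in},p_h^{\rm in})$ and $(\vrh^{\rm out},p_h^{\rm out})$ yields precisely the quadratic form $D_{s,2}(\phi_h)$, evaluated at an intermediate point $\vv_1^* \in \co{(\vrh^{\rm in},p_h^{\rm in})}{(\vrh^{\rm out},p_h^{\rm out})}$, which is non-negative since $\nabla^2(-\vr s)$ is positive semidefinite.

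\textbf{Step 4: artificial viscosity, heat flux and penalty.} For the $h^\alpha$ diffusion in the flux $\Fup$, expanding $\jump{-\vr s}$ by Taylor around $\avs{(\vrh,p_h)}$ and multiplying by $\avs{\phi_h}$ splits the contribution into the convex quadratic $D_{s,3}(\phi_h) \geq 0$ plus the sign-indefinite linear part $R_s(\phi_h)$. The heat-flux term in the pressure equation, multiplied by $\phi_h/\vth$, gives exactly $\intTd{\kappa\,\Gradd\vth\cdot\Gradd(\phi_h/\vth)}$ after using $\partial_p(-\vr s) = -1/\vt$. The penalty contribution produces $\tfrac{1}{\penl}\intOsh{(\vth-\vthB)\phi_h/\vth}$, and the viscous dissipation on the right-hand side of the pressure equation yields $-\intTd{(\bS_h:\Gradh\vuh)\phi_h/\vth}$, recovering the claimed identity \eqref{eq_entropy_stability}.

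\textbf{Main obstacle.} The delicate point is the bookkeeping in Step~3: one must carefully match the upwind decomposition of the convective flux against the first-order Taylor expansion of the entropy so that the linear-in-jump pieces cancel and only the quadratic, manifestly non-negative remainder $D_{s,2}$ survives. A secondary subtlety is verifying that the intermediate points $\vv_1^*,\vv_2^*$ produced by the Taylor remainders indeed lie in the convex hulls $\co{(\vrh^{\rm in},p_h^{\rm in})}{(\vrh^{\rm out},p_h^{\rm out})}$, so that $\nabla^2(-\vr s)|_{\vv_i^*}$ is well defined on the admissible set $\{\vr,p>0\}$ and positive semidefinite, which is the mechanism delivering $D_s(\phi_h)\geq 0$ for $\phi_h\geq 0$.
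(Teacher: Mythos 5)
Your plan is correct and follows essentially the same route as the paper: the paper obtains \eqref{eq_entropy_stability} by specializing the renormalized continuity equation to $B(\vr)=\vr\log\vr$ and the renormalized internal energy equation to $\chi(\vt)=\log\vt$ and subtracting, which is algebraically identical to your direct $(\vr,p)$-multiplier/discrete-chain-rule derivation since $\nabla_\vr(-\vr s)=c_v+1-s$ and $\nabla_p(-\vr s)=-c_v/\vt$ combine exactly those two renormalizations. Your identification of $D_{s,1}$ from the separate time-localizations in $\vr$ and $\vt$, of $D_{s,2}$ from the upwind remainder, of the $D_{s,3}$/$R_s$ split of the $h^\alpha$ term via the $\jump{ab}=\jump{a}\avs{b}+\avs{a}\jump{b}$ decomposition, and of the nonnegativity from convexity of $-\vr s$ in $(\vr,p)$ all match the paper's argument.
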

\begin{proof}
To begin, let us keep in mind that the numerical density and temperature are positive, cf. Lemma~\ref{lem_p1}.
Then, setting $B(\vr)= \vr \log(\vr)$ in the renormalized density equation (\ref{renormalized_density}) we have
\begin{align}\label{total_ener2}
& \intTd{ D_t \left( \vr _h \log(\vrh ) \right) \phi_h}
 - \intfaces{ \Up [ \vrh \log(\vrh ), \vuh ] \jump{ \phi_h } }
 + \intTd{ \vrh \Divh \vuh \phi_h }
\br
=
& - \intTd{ \frac{ \TS}{2 \xi _{\vr,h} } | D_t \vr _h |^2 \phi_h }
- h^\alpha \intfaces{ \jump{ \vrh } \jump{\left(\log(\vrh )+1\right) \phi_h } }
\nonumber \\
& \quad -\intfaces{ |\avs{\vuh } \cdot \vn | \phi_h^{\rm down} \EB{\vrhup|\vrhdown} },
\quad \quad \ \xi _{\vr,h}\in \co{\vrh^\triangleleft}{\vrh},
\end{align}
where $E_f(v_1|v_2) = f(v_1) - f'(v_2)(v_1-v_2) - f(v_2)$.

Moreover, taking the test function in the renormalized internal energy equation \eqref{eq_renormalized_energy} as $\chi(\vt)=\log(\vt)$ we get
\begin{align}\label{eq_renormalized_energy2}
 & c_v \intTd{ D_t \left(\vrh \log(\vth ) \right) \phi_h } -
c_v\intfaces{ \Up (\vrh \log(\vth ), \vuh ) \jump{\phi_h} }
+\intfaces{ \frac{\kappa}{h} \jump{\vth } \jump{\frac{\phi_h}{\vth } } }
\br
= & \intTdB{ \bS_h: \Gradh \vuh \frac{\phi_h}{\vth } - \vrh \Divh \vuh \phi_h } + \frac{c_v \TS}{2} \intTd{ \vrh^\triangleleft \biggabs{\frac{D_t \vth }{\xi_{\vt,h} }}^2 \phi_h }
\br &
- c_v h^\alpha \intfacesB{ \jump{\vrh } \jump{ \left( \log(\vth )-1\right) \phi_h} + \jump{ p_h } \jump{\frac{\phi_h}{\vth }}}-c_v\intfaces{ |\avs{\vuh } \cdot \vn | \phi_h^{\rm down} \vrhup\Echi{\vth^{\rm up}|\vth^{\rm down} }}
\br &
 - \frac{1}{\penl}\intOsh{ \frac{\vth-\vthB}{ \vth} \phi_h },
 \quad \quad \ \xi _{\vt,h}\in \co{\vth^\triangleleft}{\vth}.
\end{align}
Next, subtracting \eqref{total_ener2} from \eqref{eq_renormalized_energy2}, and thanks to the equalities
\begin{equation*}
\nabla_\vr(-\vr s(\vr,p)) = c_v+1 -s \quad \mbox{and} \quad \nabla_p(-\vr s(\vr,p))= - c_v/\vt
\end{equation*}
we obtain
\begin{align*}
& \intTd{ D_t \left(\vrh s_h \right) \phi_h } -
\intfaces{ \Up(\vrh s_h , \vuh ) \jump{\phi_h} }
 + \frac{1}{\penl}\intOsh{ \frac{\vth-\vthB}{ \vth} \phi_h }
\br & \qquad
+\intfaces{ \frac{\kappa}{h} \jump{\vth } \jump{\frac{\phi_h}{\vth } } }
- \intTd{ \frac{\phi_h}{\vth } \difuh }
\br = ~&
\TS \intTd{ \left( \frac{ | D_t \vr _h |^2 }{2 \xi _{\vr,h} } +\frac{c_v \vrh^\triangleleft |D_t \vth |^2 }{2 |\xi_{\vt,h} |^2 } \right) \phi_h}
\br&
+ \intfaces{ |\avs{\vuh } \cdot \vn | \phi_h^{\rm down} \bigg( -c_v \vrhup\Echi{\vth^{\rm up}|\vth^{\rm down} } + \EB{\vrhup|\vrhdown} \bigg) }
\br &
+ h^\alpha \intfacesB{ \jump{ \vrh } \jump{ \nabla_{\vr}(-\vrh s_h)\phi_h } + \jump{ p_h } \jump{\nabla_p(-\vrh s_h) \phi_h } }.
\end{align*}
Further, reformulating the last two terms in the above equation as
\begin{align*}
&-c_v \vrhup\Echi{\vth^{\rm up}|\vth^{\rm down} } + \EB{\vrhup|\vrhdown} \bigg)
\\
& = \left( -\vrh s_h\right)^{\rm up} - \left( -\vrh s_h\right)^{\rm down} - \left(\nabla_{\vr}(-\vrh s_h ) \right)^{\rm down} \cdot \left( \vrh^{\rm up} -\vrh^{\rm down} \right)- \left(\nabla_{p}(-\vrh s_h ) \right)^{\rm down} \cdot \left( p_h^{\rm up} -p_h^{\rm down} \right)
\\
& = \frac12 \jump{ (\vrh, p_h) } \cdot \nabla_{(\vr,p)}^2 (-\vrh s_h)|_{\vv_1^*}\cdot \jump{ (\vrh, p_h) },
\quad\quad \vv_1^*\in \co{(\vrh^{\rm in}, p_h^{\rm in})}{(\vrh^{\rm out}, p_h^{\rm out})}~ \mbox{for any}~ \sigma \in \faces,
\end{align*}
and
\begin{align*}
& \jump{ \vrh } \jump{ \nabla_{\vr}(-\vrh s_h)\phi_h } + \jump{ p_h } \jump{\nabla_p(-\vrh s_h) \phi_h } \\
 & = \jump{\phi_h} \bigg( \avs{ \nabla_\vr(-\vrh s_h) } \jump{ \vrh} + \avs{\nabla_p(-\vrh s_h)} \jump{ p_h}\bigg)
 +\avs{\phi_h} \bigg( \jump{ \nabla_\vr(-\vrh s_h) } \jump{ \vrh}+ \jump{\nabla_p(-\vrh s_h)} \jump{ p_h}\bigg)
\\&
= \jump{\phi_h} \bigg( \avs{ \nabla_\vr(-\vrh s_h) } \jump{ \vrh}
	+ \avs{\nabla_p(-\vrh s_h)} \jump{ p_h}\bigg)
 + \avs{\phi_h} \jump{ (\vrh, p_h) } \cdot \nabla_{(\vr,p)}^2 (-\vrh s_h)|_{\vv_2^*}\cdot \jump{ (\vrh, p_h) }, \\
&\hspace{0.5\textwidth} \vv_2^*\in \co{(\vrh^{\rm in}, p_h^{\rm in})}{(\vrh^{\rm out}, p_h^{\rm out})}~ \mbox{for any}~ \sigma \in \faces
\end{align*}
 we obtain \eqref{eq_entropy_stability}. Finally, we finish the proof owing to the convexity of $-\vr s$ with respect to $(\vr,p)$.
\end{proof}

\subsection{Ballistic energy balance}
As the sign of the term $(\vth-\vthB)$ sitting in both the energy balance \eqref{energy_stability} and entropy balance \eqref{eq_entropy_stability} is undetermined, we can not get any a priori estimates directly from these two equations. Therefore, we need a new concept of stability -- the so-called ballistic energy balance.
\begin{Lemma}[Ballistic energy balance]\label{THBE}
Let $(\vrh ,\vuh ,\vth )$ be a numerical solution of the FV method \eqref{VFV} and $\phi_h \in W^{1,2}(0,T;Q_h)$. Then it holds
\begin{align}\label{BE0}
 &D_t \intTd{ \left(\frac{1}{2} \vrh |\vuh |^2 + c_v \vrh \vth - \vrh s_h \phi_h \right) }
 +\frac{1}{\penl} \intOsh{ |\vuh|^2}
 +\frac{1}{\penl} \intOsh{ \frac{(\vth-\vthB)^2 }{\vth} }
\br
& + \kappa \intTd{ \frac{\avs{ \phi_h} }{\vth \vthout } \; \abs{\Gradd \vth}^2 }
+ \intTd{ \frac{\phi_h}{\vth }\difuh }
+ D_s(\phi_h) + D_{\rm E}
\br
&=
-\intTd{ \vrh s_h ( D_t \phi_h + \vuh \cdot \Gradh \phi_h) }
+ \kappa \intTd{ \avs{ \frac1{\vth} } \Gradd \vth \cdot \Gradd \phi_h}
 + R_{B} (\phi_h) - R_{s}(\phi_h)
\end{align}
with $R_{B} = R_{B,1} +R_{B,2}$ given by
\begin{equation*}
\begin{aligned}
 R_{B,1} (\phi_h) &= \frac{1}{\penl}
 \intOsh { \frac{(\vth-\vthB) (\phi_h-\vthB) }{\vth} }
 + \TS \intTd{D_t (\vrh s_h) \cdot D_t \phi_h} ,
 \\
 R_{B,2} (\phi_h) &= \frac12 \intfaces{ |\avs{\vuh} \cdot \vc{n}| \jump{\vrh s_h } \jump{ \phi_h} }
 + \frac14 \intfaces{ \jump{\vuh} \cdot \vc{n} \jump{\vrh s_h } \jump{ \phi_h} } ,
 \end{aligned}
\end{equation*}
where $D_{\rm E}$ is given in \eqref{energy_dissipation}, $D_s(\phi_h)$ and $R_{s}(\phi_h)$ are given in \eqref{entropy_dissipation}.
\end{Lemma}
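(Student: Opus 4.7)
The target quantity $\tfrac12 \vrh|\vuh|^2 + c_v\vrh\vth - \vrh s_h\phi_h$ has the structure ``kinetic $+$ internal energy minus $\phi_h\cdot$ entropy'', which suggests forming (energy balance) $-$ (entropy balance tested with $\phi_h$). Concretely, I would take \eqref{energy_stability} and subtract \eqref{eq_entropy_stability} from it. The kinetic-plus-internal part of the ballistic energy and the velocity penalty $\frac{1}{\penl}\intOsh{|\vuh|^2}$ then come straight from \eqref{energy_stability}, while the $-\vrh s_h\phi_h$ under $D_t$, the material-derivative test, the heat-dissipation square, the temperature penalty square, and the cross remainders all have to be generated through three discrete chain-rule manipulations.

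\textbf{Key manipulations.} First, using the discrete Leibniz identity in time, $D_t(fg)=(D_tf)g^\triangleleft+f(D_tg)$, applied with $f=\vrh s_h$ and $g=\phi_h$, I rewrite
\[
\intTd{ \phi_h D_t(\vrh s_h) } = D_t\intTd{\vrh s_h\phi_h}-\intTd{\vrh s_h D_t\phi_h}+\TS\intTd{ D_t(\vrh s_h)\,D_t\phi_h }.
\]
This produces the ballistic-energy time derivative (after combining with $D_t\int(\tfrac12\vrh|\vuh|^2+c_v\vrh\vth)$), the $-\int\vrh s_h D_t\phi_h$ piece of the right-hand side of \eqref{BE0}, and the second summand of $R_{B,1}$. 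Second, for the convective entropy flux I combine the upwind decomposition $\Up[f,\vuh]=\avs{f}\avs{\vuh}\cdot\vn-\tfrac12|\avs{\vuh}\cdot\vn|\jump{f}$ with the product identity $\avs{fg}=\avs{f}\avs{g}+\tfrac14\jump{f}\jump{g}$ and with the periodic summation-by-parts formula $\intfaces{\avs{f\vuh}\cdot\vn\jump{\phi_h}}=\intTd{f\vuh\cdot\Gradh\phi_h}$. Applied to $f=\vrh s_h$ this yields exactly
\[
\intfaces{\Up(\vrh s_h,\vuh)\jump{\phi_h}}=\intTd{\vrh s_h\vuh\cdot\Gradh\phi_h}-\tfrac14\intfaces{\jump{\vuh}\cdot\vn\,\jump{\vrh s_h}\jump{\phi_h}}-\tfrac12\intfaces{|\avs{\vuh}\cdot\vn|\jump{\vrh s_h}\jump{\phi_h}},
\]
so transferring the two face terms to the right-hand side gives precisely $R_{B,2}$, while the volume term completes the material-derivative test $-\vrh s_h(D_t\phi_h+\vuh\cdot\Gradh\phi_h)$.

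\textbf{Penalty and heat-flux terms.} For the temperature penalty I would combine $\frac{1}{\penl}\intOsh{(\vth-\vthB)}$ from the energy balance with $-\frac{1}{\penl}\intOsh{(\vth-\vthB)\phi_h/\vth}$ coming from the subtracted entropy balance and use the pointwise split $(\vth-\vthB)(\vth-\phi_h)/\vth = (\vth-\vthB)^2/\vth - (\vth-\vthB)(\phi_h-\vthB)/\vth$ to extract the dissipative square kept on the left and the first summand of $R_{B,1}$ on the right. For the heat flux, the discrete product rule $\Gradd(\phi_h/\vth)=\avs{\phi_h}\Gradd(1/\vth)+\avs{1/\vth}\Gradd\phi_h$ together with $\Gradd(1/\vth)=-\Gradd\vth/(\vth\,\vthout)$ turns the entropy contribution $\int\kappa\Gradd\vth\cdot\Gradd(\phi_h/\vth)$ into $-\int\kappa\avs{\phi_h}|\Gradd\vth|^2/(\vth\,\vthout)$ plus $\int\kappa\avs{1/\vth}\Gradd\vth\cdot\Gradd\phi_h$; after the sign flip due to subtraction the square moves to the left-hand side and the cross term sits on the right, exactly as in \eqref{BE0}. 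The dissipation terms $D_{\rm E}$ and $D_s(\phi_h)$ and the remainder $R_s(\phi_h)$ are simply inherited from \eqref{energy_stability} and \eqref{eq_entropy_stability} with the signs dictated by the subtraction.

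\textbf{Main obstacle.} There is no single nontrivial estimate; the difficulty is purely algebraic bookkeeping. The delicate spots are (i) that the summation-by-parts identity converting the central convective flux to $\vuh\cdot\Gradh\phi_h$ relies on the periodicity of $\tor$ and on the exact identity $\avs{fg}=\avs{f}\avs{g}+\tfrac14\jump{f}\jump{g}$, which is what produces the otherwise surprising factor $\tfrac14$ in $R_{B,2}$; and (ii) that the heat-flux splitting must yield precisely the asymmetric coefficient $1/(\vth\,\vthout)$ on the appropriate dual cells so that the dissipation term on the left-hand side of \eqref{BE0} combines cleanly with the cross term on the right. Once these two identities are executed with attention to the orientation of $\vn$ and to the ``in/out'' convention, the remaining terms match by direct inspection, and no further estimate is needed.
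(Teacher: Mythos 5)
Your proposal is correct and follows essentially the same route as the paper: subtract the entropy balance tested with $\phi_h$ from the energy balance, using the discrete Leibniz rule in time (which produces the $\TS\intTd{D_t(\vrh s_h)D_t\phi_h}$ remainder), the upwind/central decomposition of the convective entropy flux together with summation by parts (which produces $R_{B,2}$ with its $\tfrac14$ factor), the split $(\vth-\vthB)(\vth-\phi_h)=(\vth-\vthB)^2-(\vth-\vthB)(\phi_h-\vthB)$ for the penalty term, and the jump product rule $\jump{\phi_h/\vth}=\jump{\phi_h}\avs{1/\vth}+\avs{\phi_h}\jump{1/\vth}$ for the heat flux. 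All the identities you isolate are exactly the ones the paper invokes, so the argument goes through as written.
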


\begin{proof}
Firstly, using the algebraic equalities
\begin{align*}
& \phi_h D_t (\vrh s_h ) = D_t (\vrh s_h \phi_h) - (\vrh s_h - D_t(\vrh s_h) \TS ) D_t \phi_h
, \quad
\jump{\frac{\phi_h}{\vth}} = \jump{\phi_h} \avs{\frac{1}{\vth}} + \avs{\phi_h} \jump{ \frac{1}{\vth} }
\end{align*}
we reformulate the entropy balance \eqref{eq_entropy_stability} as
\begin{align*}
& D_t \intTd{ \vrh s_h \phi_h }
 + \frac{1}{\penl}\intOsh{ \frac{\vth-\vthB}{ \vth} \phi_h }
+\intfaces{ \frac{\kappa}{h} \avs{\phi_h} \jump{\vth } \jump{\frac{1}{\vth } } }
- \intTd{ \frac{\phi_h}{\vth } \difuh} 
\br
& = \intTd{ \vrh s_h D_t \phi_h }
- \TS \intTd{ D_t( \vrh s_h) D_t \phi_h }+
\intfaces{ \Up(\vrh s_h , \vuh ) \jump{\phi_h} }
\br
& \quad
- \intfaces{ \frac{\kappa}{h} \jump{\vth } \jump{\phi_h} \avs{ \frac{1}{\vth } } }
+ R_{s}(\phi_h) +D_s(\phi_h).
\end{align*}
Secondly, subtracting the above equation from the energy balance \eqref{energy_stability} we obtain
\begin{align*}\label{S6}
 &D_t \intTd{ \left(\frac{1}{2} \vrh |\vuh |^2 + c_v \vrh \vth - \vrh s_h \phi_h \right) }
 +\frac{1}{\penl} \intOsh{ |\vuh|^2}
 +\frac{1}{\penl} \intOsh{ \frac{(\vth-\vthB)^2 }{\vth} }
\\ & \quad
- \intfaces{ \frac{\kappa}{h} \avs{\phi_h} \jump{\vth } \jump{\frac{1}{\vth } } }
+ \intTd{ \frac{\phi_h}{\vth} \difuh }
+D_s(\phi_h) + D_{\rm E}
\\&
= -\intTd{ \vrh s_h D_t \phi_h }
- \intfaces{ \Up(\vrh s_h , \vuh ) \jump{\phi_h} }
+ \intfaces{ \frac{\kappa}{h} \jump{\vth } \jump{\phi_h} \avs{ \frac{1}{\vth } } }
\\&\quad
 +\frac{1}{\penl} \intOsh{ \frac{(\vth-\vthB) (\phi_h-\vthB) }{\vth} } + \TS \intTd{ D_t( \vrh s_h) D_t \phi_h } - R_{s}(\phi_h) .
\end{align*}
Reformulating the above formula with the following equalities
\begin{align*}
&\intfaces{ \Up(\vrh s_h , \vuh ) \jump{\phi_h} } = \intTd{\vrh s_h \vuh \cdot \Gradh \phi_h } - R_{B,2}(\phi_h),
\\& \intfaces{ \frac{\kappa}{h} \avs{\phi_h} \jump{\vth } \jump{\frac{1}{\vth } } } = -\kappa \intTd{ \frac{\avs{\phi_h}}{\vth \vthout } \; \abs{\Gradd \vth}^2 },
\\
&\intfaces{ \frac{\kappa}{h} \jump{\vth } \jump{\phi_h} \avs{ \frac{1}{\vth } } } = \kappa \intTd{ \avs{\frac1{\vth} } \; \Gradd \vth \cdot \Gradd \phi_h}
\end{align*}
we finish the proof.
\end{proof}

\subsection{Uniform bounds}
Now we are ready to derive the following a priori bounds. We start by showing a useful ``equality".
\begin{Lemma}
Let $\vuh \in Q_h^d, \, \mu > 0, \, \lambda \geq 0$. Then it holds
\begin{align}
\norm{\bS_h}_{L^2(\tor)} \approx \norm{\Gradh \vuh}_{L^2(\tor)} \approx \norm{\Dhuh}_{L^2(\tor)}.
\end{align}

\end{Lemma}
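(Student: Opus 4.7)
The equivalence is a chain of three estimates: (i) $\|\Dhuh\|_{L^2(\tor)} \leq \|\Gradh \vuh\|_{L^2(\tor)}$, (ii) the two-sided bound comparing $\|\bS_h\|_{L^2(\tor)}$ to $\|\Dhuh\|_{L^2(\tor)}$ and $\|\Gradh \vuh\|_{L^2(\tor)}$, and (iii) the discrete Korn inequality $\|\Gradh \vuh\|_{L^2(\tor)} \lesssim \|\Dhuh\|_{L^2(\tor)}$. Step (i) is a pointwise Cauchy--Schwarz estimate on Frobenius norms applied to the definition $\Dhuh = (\Gradh\vuh + \Gradh^T \vuh)/2$, so it contributes nothing substantive.

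For step (ii), I would decompose $\Dhuh$ into its traceless and spherical parts, which are orthogonal in the Frobenius inner product:
\begin{equation*}
\Dhuh = \bD_h^{\rm dev} + \frac{1}{d}(\Divh \vuh)\,\bI, \qquad \bS_h = 2\mu \bD_h^{\rm dev} + \Big(\frac{2\mu}{d} + \lambda\Big)(\Divh \vuh)\,\bI.
\end{equation*}
Reading off squared Frobenius norms and using $\lambda \geq 0$, one obtains the \emph{pointwise} lower bound $|\bS_h|^2 \geq 4\mu^2|\bD_h^{\rm dev}|^2 + \tfrac{4\mu^2}{d}(\Divh \vuh)^2 = 4\mu^2 |\Dhuh|^2$. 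The matching upper bound $|\bS_h| \leq 2\mu|\Dhuh| + \lambda|\Divh \vuh| \leq (2\mu + \sqrt{d}\,\lambda)|\Gradh \vuh|$ follows from the triangle inequality together with $|\Divh \vuh| = |\mathrm{tr}(\Gradh \vuh)| \leq \sqrt{d}\,|\Gradh \vuh|$. Integrating these pointwise inequalities yields $2\mu\|\Dhuh\|_{L^2(\tor)} \leq \|\bS_h\|_{L^2(\tor)} \leq (2\mu + \sqrt{d}\lambda)\|\Gradh \vuh\|_{L^2(\tor)}$.

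The substantive step is (iii), a discrete Korn-type identity on the periodic torus. The plan is to mimic the continuous identity $2\|\bD(\vu)\|_{L^2}^2 = \|\nabla \vu\|_{L^2}^2 + \|\Div \vu\|_{L^2}^2$, which on a periodic domain is obtained from the integration-by-parts equality $\int \partial_j u_i\, \partial_i u_j \,\dx = \int (\Div \vu)^2 \,\dx$. Expanding $|\Dhuh|^2 = \tfrac12 |\Gradh \vuh|^2 + \tfrac12 (\Gradh \vuh):(\Gradh^T \vuh)$ pointwise and integrating over $\tor$, the target reduces to showing that the cross term $\intTd{(\Gradh \vuh):(\Gradh^T \vuh)}$ equals $\|\Divh \vuh\|_{L^2(\tor)}^2$, or is at least non-negative. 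The route I would take is a direct face-by-face reorganisation: write each component $(\Gradh u_i)_{K,j}$ through the face sums using $\avs{\cdot}$ and $\vn$, then swap the order of summation over the periodic mesh, grouping edges by pairs so that the boundary-type contributions cancel thanks to periodicity. Combining with (i) and (ii) then closes the three-way equivalence.

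The main obstacle is carrying out the discrete integration-by-parts cleanly for the averaged gradient operator $\Gradh$ on $\tor$: one must verify that, unlike its continuous analogue on a bounded domain, no spurious surface contribution survives, and that the resulting quadratic form in the jumps/averages is non-negative (and best of all, equals $\|\Divh \vuh\|_{L^2(\tor)}^2$). Provided this holds, one concludes $\|\Gradh \vuh\|_{L^2(\tor)}^2 \leq 2\|\Dhuh\|_{L^2(\tor)}^2$, which together with steps (i) and (ii) yields the claimed three-way norm equivalence.
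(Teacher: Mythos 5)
Your proposal is correct and follows essentially the same route as the paper: everything reduces to the cross-term identity $\intTd{\Gradh \vuh : \Gradh^T \vuh} = \norm{\Divh \vuh}_{L^2(\tor)}^2$, which is exactly the relation the paper invokes (asserting it is ``easy to check'') and which does hold on the periodic torus by the summation-by-parts/face-pairing argument you outline, since $\Gradh$ acts componentwise as a central difference. The surrounding algebra differs only cosmetically --- you use a pointwise deviatoric--spherical splitting to compare $\norm{\bS_h}_{L^2(\tor)}$ with $\norm{\Dhuh}_{L^2(\tor)}$, whereas the paper routes the chain through $\intTd{\bS_h : \Gradh \vuh}$ --- so the two proofs are essentially identical.
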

\begin{proof}
It is easy to check
\begin{align*}
& \bS_h : \Divh \vuh \I = (2\mu + d\lambda) \abs{\Divh \vuh}^2,
\quad
\intTd{\Gradh \vuh : \Gradh^T \vuh } = \norm{\Divh \vuh}_{L^2(\tor)}^2,
\br & \intTdB{\mu\abs{\Gradh \vuh}^2 + (\mu+\lambda) \abs{\Divh \vuh}^2}
= \intTd{\bS_h : \Gradh \vuh} = \intTd{\bS_h : \Dhuh }
\br &
= \intTdB{2\mu \abs{\bD_h \vuh}^2 + \lambda \abs{\Divh \vuh}^2},
\br
& \norm{\Divh \vuh}_{L^2(\tor)}^2 = \norm{\mbox{tr}(\Gradh \vuh)}_{L^2(\tor)}^2 = \norm{\mbox{tr}(\Dhuh)}_{L^2(\tor)}^2 \aleq \min\left( \norm{\Gradh \vuh}_{L^2(\tor)}^2 ,\norm{\Dhuh}_{L^2(\tor)}^2 \right).
\end{align*}
Consequently, we have
\begin{align*}
\norm{\Dhuh}_{L^2(\tor)} \aleq \norm{\Gradh \vuh}_{L^2(\tor)} \aleq \intTd{\bS_h : \Gradh \vuh} \aleq \norm{\bS_h}_{L^2(\tor)} \aleq \norm{\Dhuh}_{L^2(\tor)}
\end{align*}
and complete the proof.
\end{proof}

\begin{Lemma}[Uniform bounds]\label{lm_ub}
Let $(\vrh ,\vuh, \vth)$ be a numerical solution of the FV method \eqref{VFV} with $(\TS, h,\penl) \in (0,1)^3$ and $\alpha > -1$.
Assume that the numerical density and temperature are uniformly bounded from below and above
\begin{equation}\label{HP}
 0< \Un{\vr} \leq \vrh \leq \Ov{\vr}, \quad
 0< \Un{\vt} \leq \vth \leq \Ov{\vt} \ \mbox{ uniformly for }\ \TS, h, \penl \to 0.
\end{equation}
Additionally, we shall choose $\Un{\vt}, \Ov{\vt}$ satisfying $\Un{\vt} \leq \vtB \leq \Ov{\vt}$.
\vspace{0.1cm}

Then the following hold
\begin{subequations}\label{ap}
\begin{align}\label{ap1}
& \norm{p_h}_{L^\infty((0,T)\times(\tor))} + \norm{\vuh }_{L^\infty (0,T; L^{2}(\tor)) } + \norm{\Gradd \vth}_{L^2((0,T)\times\tor)} + \norm{\Dhuh}_{L^2((0,T)\times\tor)} \leq C ,
\end{align}
\begin{align}\label{ap2}
&(\TS)^{1/2}\left( \norm{D_t \vrh }_{L^2((0,T)\times\tor)} +\norm{D_t \vth }_{L^2((0,T)\times\tor)} +\norm{D_t \vuh }_{L^2((0,T)\times\tor)} \right) \leq C ,
\end{align}
\begin{align} \label{ap3}
\int_0^{\tau}\intfaces{ \left( h^\alpha + \abs{ \avs{\vuh}\cdot \vc{n} } \right) \left( \jump{\vrh}^2 + \jump{p_h}^2 + \abs{\jump{ \vuh }}^2 \right) }\dt \leq C,
\end{align}
\begin{equation}\label{ap4}
\frac{1}{\penl} \intTauOsh{\left( |\vuh|^2+ (\vth-\vthB)^2 \right)} \leq C.
\end{equation}
\end{subequations}
The constant $C$ depends on $\|\vtB\|_{W^{1,\infty}((0,T) \times \tor)}$ and $\Un{\vr}, \Ov{\vr}, \Un{\vt}, \Ov{\vt}$, but it is independent of the discretization parameters $(h, \TS)$ as well as the penalty parameter $\penl$.
\end{Lemma}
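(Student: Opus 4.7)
The plan is to use the ballistic energy balance (Lemma~\ref{THBE}) with the test function $\phi_h = \vthB = \Pim\vtB$, integrate in time, and then cascade the resulting a priori estimate into each of the four bounds (\ref{ap1})--(\ref{ap4}).

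\textbf{Step 1 (choice of test function and initial bound).} The hypothesis (\ref{HP}) together with the perfect gas law $p=(\gamma-1)\vr\vt$ and the entropy formula $s=\log(\vt^{c_v}/\vr)$ gives immediately that $p_h$, $s_h$, and $\vrh s_h \vthB$ are uniformly bounded in $L^\infty((0,T)\times \tor)$; this already yields the pressure bound in (\ref{ap1}). Taking $\phi_h=\vthB$, the ballistic energy $\intTd{\bigl(\tfrac12\vrh|\vuh|^2 + c_v\vrh\vth - \vrh s_h\vthB\bigr)}$ is bounded at $t=0$ by the assumption on initial data and by $\inf\vtB>0$, $\vtB\in W^{2,\infty}$.

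\textbf{Step 2 (positivity and coercivity of LHS).} On the left-hand side of (\ref{BE0}), using $\Un{\vt}\leq\vth\leq\Ov{\vt}$ and $\inf\vtB\leq \avs{\vthB}\leq\|\vtB\|_\infty$, the thermal dissipation satisfies
\begin{equation*}
\kappa\intTd{\frac{\avs{\vthB}}{\vth\vthout}|\Gradd\vth|^2}\geq \frac{\kappa\,\inf\vtB}{\Ov{\vt}^2}\,\|\Gradd\vth\|_{L^2(\tor)}^2,
\end{equation*}
and, by the equivalence lemma just proved, the viscous term $\intTd{(\vthB/\vth)\,\bS_h\!:\!\Gradh\vuh}\gtrsim \|\Dhuh\|_{L^2(\tor)}^2$. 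The penalty terms are already nonnegative and give (\ref{ap4}) directly, while $D_{\rm E}\geq 0$ controls the velocity-jump part of (\ref{ap3}) and the $D_t\vuh$ contribution in (\ref{ap2}), and $D_s(\vthB)\geq 0$ controls the density- and pressure-jump parts of (\ref{ap3}) through the convexity Hessian of $-\vr s$ and supplies the $D_t\vrh$, $D_t\vth$ contributions in (\ref{ap2}).

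\textbf{Step 3 (bounding RHS).} Every RHS term in (\ref{BE0}) must be absorbed. The transport term $\intTd{\vrh s_h(D_t\vthB+\vuh\cdot\Gradh\vthB)}$ is handled by $\|\vtB\|_{W^{1,\infty}}$, the $L^\infty$ bounds on $\vrh s_h$, and Cauchy--Schwarz against $\|\vuh\|_{L^2}$ (which enters via the ballistic energy itself). The cross term $\kappa\intTd{\avs{1/\vth}\Gradd\vth\cdot\Gradd\vthB}$ is absorbed into the thermal dissipation by Young's inequality, using $\|\Gradd\vthB\|_{L^\infty}\lesssim \|\vtB\|_{W^{1,\infty}}$. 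For $R_{B,1}$ the penalty piece is controlled by $\frac{1}{2\penl}\int_{\Osh}(\vth-\vthB)^2/\vth$ from the LHS plus a bounded term, and $\TS\intTd{D_t(\vrh s_h) D_t\vthB}$ is absorbed into $D_s(\vthB)$ via Young's inequality (using $\|D_t\vthB\|_{L^\infty}\lesssim\|\vtB\|_{W^{1,\infty}}$). The convective remainder $R_{B,2}$ is bounded using $\|\jump{\vthB}\|\lesssim h\|\Grad\vtB\|_\infty$ and Young's inequality against the upwind dissipation term in $D_{\rm E}$. Finally $R_s(\vthB)$ uses again $|\jump{\vthB}|\lesssim h\|\Grad\vtB\|_\infty$, the bounds on $\nabla_\vr(-\vr s)$ and $\nabla_p(-\vr s)$, and Young's inequality to absorb into $D_s(\vthB)$ (the factor $h^\alpha$ matches the numerical viscosity in $D_s$).

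\textbf{Step 4 (Gronwall and conclusion).} After absorption, integrating on $(0,\tau)$ and applying the discrete Gronwall inequality to control the $\|\vuh\|_{L^2}^2$ contribution on the RHS produces a uniform bound on the ballistic energy, which (via $\vrh\geq\Un{\vr}$) yields the $L^\infty(0,T;L^2)$ bound on $\vuh$ in (\ref{ap1}), together with all the dissipative estimates already identified.

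The main technical obstacle is the simultaneous handling of $R_s(\vthB)$, $R_{B,1}$, and the thermal cross term: each is a priori of size comparable to a dissipation term on the left, so their treatment must be done in the right order, using the smallness provided either by the factor $h$ in $\jump{\vthB}$, by $\TS$, or by $\|\vtB\|_{W^{1,\infty}}\cdot$(dissipative quadratic), so that each is absorbed into the correct nonnegative LHS quantity without introducing a constant depending on $h$, $\TS$, or $\penl$.
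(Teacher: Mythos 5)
Your proposal is correct and follows essentially the same route as the paper: test the ballistic energy balance with $\phi_h=\vthB=\Pim\vtB$, use \eqref{HP} for coercivity of the dissipative terms on the left, absorb the right-hand side via Young's inequality and the bound $|\jump{\vthB}|\aleq h\norm{\Grad\vtB}_{L^\infty}$, and conclude with Gronwall. The only cosmetic difference is your treatment of the penalty part of $R_{B,1}$, which with the choice $\phi_h=\vthB$ actually vanishes identically rather than needing to be absorbed.
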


\begin{proof}[Proof of Lemma \ref{lm_ub}]
 We take $\phi_h=\vthB (:=\Pim \vtB)$ in the ballistic energy balance \eqref{BE0} and aim for an inequality that allows us to apply Gronwall's lemma. Before going to the details let us recall assumption \eqref{HP}, which implies
\begin{equation}\label{HP-1}
\Un{s} \leq s \leq \Ov{s} \quad  \mbox{ uniformly for }\ \TS, h, \penl \to 0.
\end{equation}
Now, applying \eqref{HP}, \eqref{HP-1} and \eqref{S6} we have the following estimates for the left-hand-side terms of the ballistic energy balance \eqref{BE0}
\begin{align}
& \frac{1}{\penl} \intOsh{ \frac{(\vth-\vthB)^2 }{\vth} } \ageq \frac{1}{\penl} \intOsh{ (\vth-\vthB)^2 },
\quad
 \kappa \intTd{ \frac{\avs\vthB}{\vth \vthout } \; \abs{\Gradd \vth}^2 } \ageq \norm{ \Gradd \vth}_{L^2(\tor)}^2,
\br &
\intTd{ \frac\vthB{\vth} \difuh } \ageq \norm{\Dhuh}_{L^2(\tor)}^2,
\quad
 D_{s,1}(\vthB) \ageq \ \TS \bigg(\norm{D_t \vrh }_{L^2(\tor)}^2 + \norm{D_t \vth }_{L^2(\tor)}^2 \bigg),
\br & \label{ds}
D_{s,2}(\vthB) \ageq \ \intfaces{ \abs{ \avs{\vuh}\cdot \vc{n} } \left( \jump{\vrh}^2 + \jump{p_h}^2\right) },
\quad D_{s,3}(\vthB) \ageq h^\alpha 
 \intfacesB{ \jump{\vrh}^2 + \jump{p_h}^2 },
\\ &
D_{\rm E} \ageq \TS \norm{D_t \vuh }_{L^2(\tor)}^2 + \intfaces{ \left( h^\alpha + |\avs{\vuh } \cdot \vc{n} | \right) \abs{\jump{\vuh}}^2 } \nonumber.
\end{align}
Note that in the estimates of $D_{s,2}, D_{s,3}$ we have used the fact that the Hessian matrix $\nabla_{(\vr,p)}^2 (-\vr s)$ is bounded from below by a positive constant under assumption \eqref{HP}.

 Next, applying the interpolation estimates and Young's inequality we have the following estimates for the right-hand-side terms of the ballistic energy balance \eqref{BE0}
\begin{align*}
 \Bigabs{\intTd{ \vrh s_h D_t \vthB }} & \aleq \norm{\vtB}_{W^{1,\infty}(0,T;L^1(\tor))} \aleq 1,
\\
\Bigabs{ \intTd{\vrh s_h \vuh \cdot \Gradh\vthB} } &\aleq \norm{\vtB}_{L^{\infty}(0,T;W^{1,\infty}(\tor))} \intTd{ \abs{\vrh \vuh}}
 \aleq 1 + \intTd{ \frac12 \vrh \abs{\vuh}^2},
\\
\Bigabs{ \intTd{ \avs{\frac1{\vth}} \; \Gradd \vth \cdot \Gradd \vthB}} & \aleq \norm{\vtB}_{L^{\infty}(0,T;W^{1,\infty}(\tor))} \intTd{ \abs{\Gradd \vth}}
 \\
 &\aleq \intTdB{ \delta \abs{\Gradd \vth}^2 + \frac1{\delta}}
 \aleq \frac{1}{\delta} + \delta \norm{\Gradd \vth}_{L^2(\tor)}^2,
\end{align*}
for a suitable $\delta \in \R$. Similarly,
\begin{align}
&\Bigabs{\TS \intTd{D_t (\vrh s_h) \; D_t \vthB} } \aleq \norm{\vtB}_{W^{1,\infty}(0,T;L^{\infty}(\tor))} \TS \intTdB{ \abs{D_t \vrh} + \abs{D_t \vth} }
\br
&\hspace{1.8cm} \aleq \frac{\TS}{\delta}+ \TS \delta \bigg(\norm{D_t \vrh }_{L^2(\tor)}^2 + \norm{D_t \vth }_{L^2(\tor)}^2 \bigg),
 \br
 &\Bigabs{R_{s}(\vthB)} \aleq \norm{\vtB}_{L^{\infty}(0,T;W^{1,\infty}(\tor))} h^{\alpha+1} \intfaces{ \bigg( \abs{ \jump{ \vrh}}
	+ \abs{ \jump{ p_h}} \bigg)}
\br
&\hspace{1.8cm}\aleq h^{\alpha+1} \intfaces{ \left[\frac{h}{\delta} + \frac{\delta}{h} \bigg( \jump{ \vrh}^2	+ \jump{ p_h}^2 \bigg) \right]}
 \aleq \frac1{\delta} h^{\alpha+1} + \delta h^\alpha \intfacesB{ \jump{\vrh }^2 + \jump{p_h }^2 } ,
\label{rs}
\\& \Bigabs{R_{B,2}(\vthB)} \aleq \norm{\vtB}_{L^{\infty}(0,T;W^{1,\infty}(\tor))} h \intfaces{ \abs{\vuh}} \aleq \intTd{ \abs{ \vuh} }
 \aleq 1 + \intTd{\frac12 \vrh \abs{\vuh}^2}. \nonumber
\end{align}
Collecting the above estimates we derive the ballistic energy inequality
\begin{align}\label{BEI}
 &D_t \intTd{ \left(\frac{1}{2} \vrh |\vuh |^2 + c_v \vrh \vth - \vrh s_h \vthB \right) }
 +\frac{1}{\penl} \intOsh{ |\vuh|^2}
 +\frac{1}{\penl} \intOsh{ (\vth-\vthB)^2 }
\br
&\hspace{1cm} + \TS\bigg(\norm{D_t \vrh }_{L^2(\tor)}^2 + \norm{D_t \vth }_{L^2(\tor)}^2 + \norm{D_t \vuh }_{L^2(\tor)}^2\bigg) + \norm{\Gradd \vth}_{L^2(\tor)}^2 + \norm{\Dhuh}_{L^2(\tor)}^2
\br
&\hspace{1cm} +\intfaces{ \left( h^\alpha + \abs{ \avs{\vuh}\cdot \vc{n} } \right) \left( \jump{\vrh}^2 + \jump{p_h}^2 + \abs{\jump{ \vuh }}^2 \right) }
\br
\aleq \ &1+ \frac{1}{\delta} + \frac{1}{\delta}h^{\alpha+1}+ \intTd{ \frac12 \vrh \abs{\vuh}^2} + \delta \TS \bigg(\norm{D_t \vrh }_{L^2(\tor)}^2 + \norm{D_t \vth }_{L^2(\tor)}^2 \bigg)
\br
&+ \delta \ \left( \norm{\Gradd \vth}_{L^2(\tor)}^2 + h^\alpha \intfacesB{ \jump{\vrh }^2 + \jump{p_h }^2 } \right).
\end{align}
Hence, choosing $\alpha>-1$, $\delta \in (0,1)$ and applying Gronwall's lemma we obtain from $\Abs{\intTd{(\vrh s_h \vthB)(\tau,\cdot)}} \aleq 1$ and above ballistic energy inequality \eqref{BEI} that
\begin{align*}
 &\intTd{ \left(\frac{1}{2} \vrh |\vuh |^2 + c_v \vrh \vth \right)({\tau},\cdot) }
 +\frac{1}{\penl} \intTauOsh{\left( |\vuh|^2 + (\vth-\vthB)^2 \right)}
\br
&\hspace{1cm} + \TS\intTauTdB{ \abs{ D_t \vrh }^2 +\abs{ D_t \vth }^2+\abs{ D_t \vuh }^2} + \intTauTdB{ \abs{\Gradd \vth}^2 + | \Dhuh|^2 }
\br
&\hspace{1cm}+\int_0^{\tau}\intfaces{ \left( h^\alpha + \abs{ \avs{\vuh}\cdot \vc{n} } \right) \left( \jump{\vrh}^2 + \jump{p_h}^2 + \abs{\jump{ \vuh }}^2 \right) }\dt
\br
& \aleq 1+ \intTd{ \left(\frac{1}{2} \vrh^0 |\vuh^0 |^2 + c_v \vrh^0 \vth^0 \right)} \aleq 1
\hspace{4cm}
\mbox{with}~ \tau = t^k, k = 1,\dots,N_T,
\end{align*}
which gives the a priori estimates \eqref{ap} and concludes the proof.
\end{proof}

\section{Consistency}\label{sec_con}
Having shown the stability of our FV method \eqref{VFV} we are now ready to derive its consistency. The proof is similar to those presented in literature results, e.g. \cite{FeLMMiSh}, admitting differences in technical details. Thus, we postpone them to Appendix \ref{app-cf} for better readability.

\begin{Lemma}[Consistency of the continuity, momentum and entropy equations]\label{lem_C1}
Let $(\vrh, \vuh, \vth)$ be a numerical solution obtained by the FV scheme \eqref{VFV} with $(\TS, h,\penl) \in (0,1)^3$ and $-1 < \alpha <1$.
Let assumption \eqref{HP} hold.

Then for any $\tau \in [0,T]$ we have
\begin{subequations}\label{eq_C1_D}
\begin{equation} \label{cP1_D}
\left[ \intTd{ \vrh\phi } \right]_{t=0}^{t=\tau}=
 \intTauTdB{ \vrh \partial_t \phi + \vrh \vuh \cdot \Grad \phi } + e_\vr(\phi)
\end{equation}
for any $\phi \in C^2([0,T] \times \tor)$;
\begin{align} \label{cP2_D}
\left[ \intOf{ \vrh \vuh \cdot \bfphi } \right]_{t=0}^{t=\tau} = &
\intTauOfB{ \vrh \vuh \cdot \partial_t \bfphi + \vrh \vuh \otimes \vuh : \Grad \bfphi } \br
&- \intTauOf{ ( \bS_h -p_h \I) : \Grad \bfphi }
+ e_{\vm}(\bfphi)
 \end{align}
for any $\bfphi \in C^2_c([0,T] \times \Of; \R^d)$;
\begin{align}\label{cP3_D}
\left[ \intOf{ \vrh s_h \phi } \right]_{t=0}^{t=\tau} & \geq
 \int_0^\tau\intOf{ \vrh s_h (\pd_t\phi + \vuh \cdot \Grad \phi ) } \dt - \int_0^\tau\intOf{ \frac{\kappa}{\vth} \Gradd \vth \cdot \Grad \phi } \dt
\br
&+ \int_0^\tau\intOf{ \frac{ \kappa \phi}{ \vthout \vth} \abs{\Gradd \vth }^2 } \dt
+ \int_0^\tau\intOf{\difuh \frac{ \phi}{\vth} } \dt
 + e_{s}(\phi)
\end{align}
for any $\phi \in C^2_c([0,T] \times \Of)$, $\phi \geq 0$.

The consistency errors satisfy
\begin{align*}
 \Bigabs{e_\vr(\phi) } & \leq C_\varrho\left( \TS +h+h^{1-\alpha} + h^{\alpha+1}\right),
 \\
 \Bigabs{ e_{\vm}(\bfphi) }& \leq C_{\vm}\left(\TS+h+h^{(1-\alpha)/2}+h^{\alpha+1} +h^{3/2} \penl^{-1/2} \right), 
 \\
\Bigabs{ e_{s}(\phi) } &\leq C_s\left( \TS +h+h^{1-\alpha} + h^{(1+\alpha)/2} + h^{3/2} \penl^{-1/2}\right),
\end{align*}
\end{subequations}
where the constants $C_\vr,$ $C_{\vm},$ $C_{s}$ are independent of parameters $\TS,h,\penl$.
\end{Lemma}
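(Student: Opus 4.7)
The overall strategy is to test each equation of the FV scheme \eqref{VFV} with the cell-wise projection $\Pim \phi$ (resp.~$\Pim \bfphi$) of a smooth test function, rewrite the discrete identity into a continuous counterpart by discrete integration-by-parts and Taylor expansion, and absorb the mismatch into the consistency errors $e_\vr$, $e_\vm$, $e_s$. These errors are then bounded by combining the a priori estimates of Lemma~\ref{lm_ub} with the standard projection bounds $\|\Pim \phi - \phi\|_{L^\infty}\aleq h\,\|\phi\|_{C^1}$, $|\jump{\Pim \phi}|\aleq h\,\|\phi\|_{C^1}$ and their time analogues $|D_t \Pim\phi - \pd_t\phi|\aleq \TS\,\|\phi\|_{C^2_t}$.

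For the continuity identity \eqref{cP1_D}, I would plug $\phi_h=\Pim\phi$ into \eqref{VFV_D}. Telescoping $D_t\vrh$ in time and using \eqref{ap2} converts the first term into $[\int_\tor \vrh\phi]_0^\tau - \intTauTd{\vrh\pd_t\phi}$ up to an $O(\TS)$ residual. Splitting the upwind flux as $\Fup = \Up - h^\alpha\jump{\vrh}$, the convective part reconstructs $\intTauTd{\vrh\vuh\cdot\Grad\phi}$ modulo an $O(h)$ projection error, while the remaining dissipation $(\vrh^{\rm up}-\avs{\vrh})\avs{\vuh}\cdot\vn\jump{\Pim\phi}$ is controlled by Cauchy--Schwarz together with \eqref{ap3}, yielding the $O(h^{1-\alpha})$ contribution. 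The artificial viscosity term is bounded by $h^\alpha\|\jump{\vrh}\|_{L^2(\faces)}\|\jump{\Pim\phi}\|_{L^2(\faces)}\aleq h^{\alpha+1}$ via \eqref{ap3}. Collecting these gives the bound on $e_\vr$.

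The momentum identity \eqref{cP2_D} follows the same template applied to \eqref{VFV_M}: the convective and viscous bulk contributions are estimated exactly as above, producing the $\TS$, $h$, $h^{(1-\alpha)/2}$ and $h^{\alpha+1}$ terms (the halved exponent arising because Cauchy--Schwarz must be combined with $\|\vuh\|_{L^2}$ from \eqref{ap1} rather than with the jump of $\vuh$). The genuinely new piece is the penalty term $\penl^{-1}\int_{\Osh}\vuh\cdot\Pim\bfphi$: although $\supp\bfphi$ is compact in $\Of$, the projection can extend into a boundary strip of measure $\aleq h$ inside $\Osh$ (cf.~Remark \ref{rmk}). Applying Cauchy--Schwarz with \eqref{ap4} then gives
\[
\penl^{-1}\,\|\vuh\|_{L^2(0,\tau;L^2(\Osh))}\,\|\Pim\bfphi\|_{L^2(0,\tau;L^2(\Osh))}\aleq \penl^{-1}\cdot\penl^{1/2}\cdot h^{3/2}\|\bfphi\|_{C^1}=h^{3/2}\penl^{-1/2},
\]
which is the origin of that contribution. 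The entropy inequality \eqref{cP3_D} is then obtained by taking $\phi_h=\Pim\phi\ge 0$ in the entropy balance of Lemma~\ref{THENT}, discarding the non-negative dissipation $D_s(\Pim\phi)\ge 0$, and reorganizing the remaining terms exactly as in the continuity and momentum cases; the penalty term $\penl^{-1}\int_{\Osh}(\vth-\vthB)\Pim\phi/\vth$ is handled analogously via \eqref{ap4} to produce the same $h^{3/2}\penl^{-1/2}$ contribution, while $R_s(\Pim\phi)$ and $D_{s,2},D_{s,3}$ give the jump-type residuals estimated by \eqref{ap3}.

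The main obstacle is the careful bookkeeping of two competing scales. The artificial viscosity $h^\alpha$ pushes one source of error into $h^{\alpha+1}$, while the upwind dissipation combined with \eqref{ap3} pushes another into $h^{1-\alpha}$ (or $h^{(1-\alpha)/2}$ once a Cauchy--Schwarz against a non-jump quantity must be used), so both exponents must be made positive — this is the source of the restriction $-1<\alpha<1$. A secondary technical point is the need to split bulk integrals between $\Of$, $\Ofh$ and $\tor$ using the geometric decomposition of Definition~\ref{def_ES2} and the measure estimate \eqref{EXTD}, so that boundary-layer remainders of size $O(h)$ are absorbed cleanly into the stated bounds. Once these two issues are handled, the rest of the argument is a routine (if lengthy) combination of discrete product rules, Taylor remainders and the uniform bounds from Lemma~\ref{lm_ub}.
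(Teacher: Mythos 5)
Your overall architecture coincides with the paper's: the paper simply invokes the torus-level consistency formulation (Lemma \ref{lem_C1-tor}, quoted from \cite{BLMSY}) and observes that for test functions compactly supported in $\Of$ the only genuinely new contribution is the penalty term restricted to the strip $\Osh\setminus\Os$, where $|\bfphi|\aleq h$. Your Cauchy--Schwarz estimate $\penl^{-1}\norm{\vuh}_{L^2((0,\tau)\times\Osh)}\norm{\Pim\bfphi}_{L^2((0,\tau)\times\Osh)}\aleq \penl^{-1}\cdot\penl^{1/2}\cdot h^{3/2}$ is equivalent to the paper's route through \eqref{lm_0}, and the analogous treatment of the temperature penalty term via \eqref{ap4} is likewise correct. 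So the structure and the $h^{3/2}\penl^{-1/2}$ contribution are fine.

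There is, however, one step that fails as written: the artificial-viscosity term in the continuity equation. By \eqref{ap3} one has $\norm{\jump{\vrh}}_{L^2((0,\tau)\times\faces)}\aleq h^{-\alpha/2}$, and since the total face measure is of order $h^{-1}$, the projection bound $|\jump{\Pim\phi}|\aleq h$ only gives $\norm{\jump{\Pim\phi}}_{L^2(\faces)}\aleq h^{1/2}$; your product therefore yields $h^{\alpha}\cdot h^{-\alpha/2}\cdot h^{1/2}=h^{(1+\alpha)/2}$, not $h^{\alpha+1}$. For $\alpha<1/3$ this exponent is strictly smaller than $\min(1-\alpha,\,1+\alpha)$, so the term is \emph{not} absorbed into the stated bound for $e_\vr$. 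The $h^{\alpha+1}$ rate requires a different idea: sum by parts twice, writing
\[
h^\alpha\intfaces{ \jump{\vrh}\jump{\Pim\phi} } = -\,h^{\alpha+1}\intTd{ \vrh\,\Delta_h\Pim\phi },
\]
cf.\ \eqref{dis_op3} and the term $E_3$ in the paper's error decomposition, and then use the uniform bound on $\vrh$ together with $\norm{\Delta_h\Pim\phi}_{L^\infty(\tor)}\aleq\norm{\phi}_{C^2}$ — this is precisely where the $C^2$ regularity of $\phi$ is consumed. Note the contrast with the entropy balance: the residual $R_s(\Pim\phi)$ cannot be summed by parts a second time (its coefficients are not discrete derivatives of a smooth function), which is why $e_s$ only achieves the rate $h^{(1+\alpha)/2}$ while $e_\vr$ achieves $h^{\alpha+1}$; your sketch conflates these two mechanisms. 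A minor additional point: once $D_s(\Pim\phi)\geq 0$ is discarded to obtain the inequality \eqref{cP3_D}, the terms $D_{s,2},D_{s,3}$ no longer appear, so only $R_s$ remains to be estimated.
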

\begin{Lemma}[Consistency of the ballistic energy inequality]\label{lem_C4}
Let $(\vrh, \vuh, \vth)$ be a numerical solution obtained by the finite volume scheme \eqref{VFV} with $(\TS, h,\penl) \in (0,1)^3$ and $-1 < \alpha <1$.
Let assumption \eqref{HP} hold.

Then, for any $\tau \in [0,T]$ and any $\hvt \in W^{2,\infty}((0,T) \times \tor),\ \inf \hvt > 0,\ \hvt|_{\Os} = \vtB$ it holds that
\begin{subequations}\label{eq_C4}
\begin{align}\label{cP4}
 &\left[ \intTd{ \left(\frac{1}{2} \vrh |\vuh |^2 + c_v \vrh \vth - \vrh s_h \hvt \right) (t, \cdot)} \right]_{t=0}^\tau
 + \intTauTdB{ \frac{ \kappa\hvt}{\vth \vthout } \; \abs{\Gradd \vth}^2 + \frac{\hvt}{\vth }\difuh}
\br
 & \quad +\frac{1}{\penl} \intTauOshB{ |\vuh|^2 + \frac{(\vth-\vthB)^2 }{\vth} }
\br
 & \leq - \intTauTdB{ \vrh s_h \partial_t \hvt + \vrh s_h \vuh \cdot \Grad \hvt - \frac{\kappa}{\vth} \; \Gradd \vth \cdot \Grad \hvt} + e_{B}(\hvt) + C_B' h^{1+\alpha},
\end{align}
where
 \begin{align}\label{ec4}
 \abs{e_{B}(\hvt) } \leq C_{B}\left( (\TS)^{1/2} +h+h^{1-\alpha} + h^{3/2} \penl^{-1/2}\right)
 \end{align}
 and constants $C_B, C_B'$ are independent of parameters $\TS,h,\penl$.
\end{subequations}
\end{Lemma}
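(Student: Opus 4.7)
The starting point is the discrete ballistic energy balance \eqref{BE0} applied to the projected test function $\phi_h = \Pim \hvt$, where $\hvt \in W^{2,\infty}((0,T)\times\tor)$ with $\hvt|_{\Os} = \vtB$. The plan is to integrate \eqref{BE0} in time from $0$ to $\tau$, drop the nonnegative dissipation terms $D_{\rm E}$ and $D_{s,1}, D_{s,2}$ from the left-hand side (turning the identity into the desired inequality), and then replace every discrete differential operator acting on $\Pim\hvt$ by the corresponding continuous one acting on $\hvt$, absorbing all projection and consistency errors into $e_B(\hvt)$. The single remaining dissipation term $D_{s,3}(\Pim \hvt)$ is retained as a nonnegative quantity on the left, but will be partially used on the right to absorb $R_s(\Pim\hvt)$; this is what eventually produces the separate contribution $C_B' h^{1+\alpha}$.

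The routine estimates go as follows. Using that $\hvt$ is $W^{2,\infty}$ in time and space, standard interpolation gives $\|\Pim\hvt - \hvt\|_{L^\infty} \aleq h$, $\|D_t \Pim\hvt - \partial_t \hvt\|_{L^2} \aleq h + \TS^{1/2}$, $\|\Gradh \Pim\hvt - \Grad \hvt\|_{L^\infty} \aleq h$, and similarly for $\Gradd \Pim\hvt - \Grad \hvt$. Combined with the a priori bounds \eqref{ap1}--\eqref{ap3} on $\vrh, \vuh, \vth, \Gradd \vth, \Dhuh$ and on the jumps $\jump{\vrh}, \jump{p_h}, \jump{\vuh}$, these yield the $\TS^{1/2}+h+h^{1-\alpha}$ contributions in $e_B$. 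The conversion $\avs{1/\vth}\Gradd\vth\cdot\Gradd\Pim\hvt \mapsto (1/\vth)\Gradd\vth\cdot\Grad\hvt$ uses the exact identity $\int_{D_\sigma}\avs{1/\vth} = \int_{D_\sigma} 1/\vth$, smoothness of $\hvt$, and Cauchy--Schwarz against the $L^2$ bound on $\Gradd\vth$; the conversion $\avs{\Pim\hvt}/(\vth\vthout)|\Gradd\vth|^2 \mapsto \hvt/(\vth\vthout)|\Gradd\vth|^2$ is analogous, with error $\aleq h\|\Gradd\vth\|_{L^2}^2 \aleq h$.

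The main obstacle is the treatment of the penalty remainder
\[
R_{B,1}(\Pim\hvt) = \frac{1}{\penl}\intTauOshB{\frac{(\vth-\vthB)(\Pim\hvt-\vthB)}{\vth}} + \TS\intTauTd{D_t(\vrh s_h)\,D_t \Pim\hvt}.
\]
The second summand is handled by Cauchy--Schwarz and \eqref{ap2}, giving $\aleq \TS^{1/2}$. For the penalty term, I will exploit the crucial fact that $\hvt = \vtB$ on $\Os$, so $\Pim\hvt - \vthB$ vanishes on every cell $K \subset \Os$. By Remark~\ref{rmk}, $\Osh \setminus \Os \subset \Och$ with $|\Och|\aleq h$, and on this thin strip the smoothness of $\hvt, \vtB$ together with $\hvt|_{\pd\Of} = \vtB|_{\pd\Of}$ yields $|\Pim\hvt - \vthB| \aleq h$. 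Combining this with the a priori bound $\|\vth - \vthB\|_{L^2((0,\tau)\times\Osh)} \aleq \penl^{1/2}$ from \eqref{ap4} via Cauchy--Schwarz gives
\[
\frac{1}{\penl}\intTauOshB{|\vth-\vthB|\,|\Pim\hvt - \vthB|/\vth}
\aleq \frac{1}{\penl}\cdot \penl^{1/2}\cdot \bigl(h\cdot h^2\bigr)^{1/2} = h^{3/2}\penl^{-1/2},
\]
which is the origin of the $h^{3/2}\penl^{-1/2}$ contribution in $e_B$. The term $R_{B,2}(\Pim\hvt)$ contains a factor $\jump{\Pim\hvt} \aleq h$ and is bounded by $O(h)$ using \eqref{ap3}.

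Finally, $R_s(\Pim\hvt)$ is bounded using $|\jump{\Pim \hvt}| \aleq h$ and Young's inequality against the retained dissipation $D_{s,3}(\Pim\hvt) \geq c h^{\alpha}\int_0^\tau \intfaces(\jump{\vrh}^2 + \jump{p_h}^2)$:
\[
|R_s(\Pim\hvt)| \leq \tfrac{1}{2} D_{s,3}(\Pim\hvt) + C h^{1+\alpha},
\]
which allows the half of $D_{s,3}$ absorbed on the right to be discarded (using $D_{s,3}\geq 0$), while the residual $C h^{1+\alpha}$ is precisely the term $C_B' h^{1+\alpha}$ in the statement. Collecting all bounds yields the claimed form \eqref{cP4} with $|e_B(\hvt)| \leq C_B(\TS^{1/2} + h + h^{1-\alpha} + h^{3/2}\penl^{-1/2})$.
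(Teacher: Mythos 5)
Your overall architecture coincides with the paper's: take $\phi_h=\Pim\hvt$ in the ballistic energy balance \eqref{BE0}, retain part of the entropy dissipation to absorb $R_s(\Pim\hvt)$ via Young's inequality (this is indeed where $C_B'h^{1+\alpha}$ comes from, cf.\ \eqref{ee-rs}), exploit $\hvt|_{\Os}=\vtB$ so that the penalty remainder in $R_{B,1}$ lives only on the $O(h)$--measure strip of straddling cells and yields $h^{3/2}\penl^{-1/2}$, bound $\TS\intTd{D_t(\vrh s_h)\,D_t\Pim\hvt}$ by $(\TS)^{1/2}$ via \eqref{ap2}, and convert discrete to continuous operators with $O(h)$ projection errors. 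All of those steps are correct and match the paper (the paper works on $\Osh\setminus\OOh$ rather than your $\Osh\setminus\Os$, but both strips sit inside $\Och$ and carry the same measure bound, so this is immaterial).

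The one step that does not go through as written is your treatment of $R_{B,2}(\Pim\hvt)$. The term $\tfrac12\int_0^\tau\intfaces{ |\avs{\vuh}\cdot\vn|\,\jump{\vrh s_h}\,\jump{\Pim\hvt} }\dt$ cannot be bounded by $O(h)$ using only $|\jump{\Pim\hvt}|\aleq h$ together with \eqref{ap3}: Cauchy--Schwarz gives
$h\big(\int_0^\tau\intfaces{|\avs{\vuh}\cdot\vn|}\dt\big)^{1/2}\big(\int_0^\tau\intfaces{|\avs{\vuh}\cdot\vn|\,\jump{\vrh s_h}^2}\dt\big)^{1/2}\aleq h\cdot h^{-1/2}\cdot 1=h^{1/2}$,
because $\intfaces{|\avs{\vuh}|}\aleq h^{-1}\norm{\vuh}_{L^2(\tor)}$ and that is all the a priori bounds give you. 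For $\alpha<1/2$ the resulting $h^{1/2}$ is dominated neither by $h$ nor by $h^{1-\alpha}$, so the claimed estimate \eqref{ec4} would not follow. The paper obtains $O(h+h^{1-\alpha})$ for this term by citing the discrete summation-by-parts argument of [BLMSY, Appendix C] (the same device used for $E_1$ in Appendix D of the present paper): the face sum is rewritten as a volume integral in which a second discrete derivative falls on $\hvt$ and only a first discrete derivative on $\avs{\vuh}$, which is then controlled by $\norm{\Gradh\vuh}_{L^2}$. This is precisely why the lemma assumes $\hvt\in W^{2,\infty}$ rather than $W^{1,\infty}$. You need that step, or an equivalent one, to close the argument.
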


\section{Convergence}\label{sec_convergence}
We are now ready to study the convergence of the penalty FV method \eqref{VFV} to a DMV solution. Consequently, we prove the existence of global DMV solution for the NSF system \eqref{pde} with the Dirichlet boundary conditions. In addition, we show the strong convergence to the strong solution as long as the latter exists.

\subsection{DMV solution}
To begin we introduce the definition of the DMV solution proposed in the work of Chaudhuri \cite{Chaudhuri:2022}.
\begin{Definition}[DMV solution of the Dirichlet problem \cite{Chaudhuri:2022}] \label{DMV}
A parametrized family of probability measures $\{\mathcal{V}_{t,x}\}_{(t,x)\in (0,T)\times\Of}$ is the DMV solution to the Navier--Stokes--Fourier system \eqref{pde}--\eqref{ic} with the initial condition $\{\mathcal{V}_{0,x}\}_{x\in\Of}$ if the following hold:
\begin{itemize}
\item {\bf Mapping}
\begin{equation*}
\mathcal{V}_{t,x} : (t,x) \in (0,T)\times \Of \longmapsto \mathcal{P}(\mathcal{F}) \quad \mbox{is weakly-}(^*) measurable
\end{equation*}
with $\mathcal{P}$ being the space of probability measures defined on the phase space
\begin{equation*}
\mathcal{F} = \left\{ \vr, \vu, \vt, \bD_{\vu}, \bD_{\vt} \, \bigg| \, \vr \geq 0, \vt \geq 0, \vu \in \R^d, \bD_{\vu} \in \R^{d\times d}_{sym}, \bD_{\vt} \in \R^d \right\}.
\end{equation*}

\item {\bf Compatibility condition}
\begin{align}\label{dmv_cc}
&-\intTOf{ \mytangle{\vu} \cdot \Div \mathbb{T} } = \intTOf{\mytangle{ \bD_{\vu}} : \mathbb{T} } ,
\br
&-\intTOf{\mytangle{ \vt - \hvt} \cdot \Div \bfphi } = \intTOf{\mytangle{ \bD_{\vt} - \Grad \hvt} \cdot \bfphi }
\end{align}
for any $ \mathbb{T} \in C^{1}([0,T]\times \Ov{\Of};\R^{d\times d}_{sym})$, $\bfphi \in C^{1}([0,T]\times \Ov{\Of};\R^{d})$ and $\hvt \in C^1([0,T] \times \Ov{\Of}),\ \hvt|_{\partial \Of} = \vtB$.

\item {\bf Continuity equation}
		\begin{equation} \label{dmv_d}
		\intOf{\myTauangle{ \vr}\, \phi(\tau,\cdot) } -\intOf{\myZangle{ \vr}\, \phi(0,\cdot) } = \intTauOfB{ \mytangle{\vr} \partial_t \phi + \mytangle{\vr \vu} \cdot \Grad \phi }
		\end{equation}
	for a.a. $\tau \in (0,T)$ and any $\phi \in C^{1}([0,T] \times \Ov{\Of})$.
			
\item {\bf Momentum equation}
\begin{align}\label{dmv_m}
	&\intOf{ \myTauangle{\vr \vu} \cdot \bfphi(\tau,\cdot) } - \intOf{ \myZangle{\vr \vu} \cdot \bfphi(0,\cdot) }
	\br
	& = \intTauOfB{ \mytangle{\vr \vu} \cdot \partial_t \bfphi + \mytangle{\vr \vu \otimes \vu} : \Grad \bfphi + \mytangle{p(\vr,\vt)} \, \Div \bfphi }
	\br
	& - \intTauOf{ \mytangle{\bS(\bD_{\vu}) }: \Grad \bfphi }+ \int_0^{\tau} \int_{\Of}\Grad \bfphi : d\mathfrak{R}(t) \dt
\end{align}	
for a.a. $\tau \in (0,T)$ and any $\bfphi \in C_c^1([0,T] \times \Of; \R^d)$
with the Reynolds defect measure
	 \begin{equation*}
	 \mathfrak{R} \in L^{\infty}(0,T; \mathcal{M}(\Of; \mathbb{R}^{d\times d}_{\mbox{sym}})).
	 \end{equation*}

	\item {\bf Entropy inequality}	
	\begin{align}\label{dmv_s}
	&\intOf{ \myTauangle{\vr s(\vr, \vt)} \cdot \phi(\tau,\cdot) }- \intOf{ \myZangle{\vr s(\vr, \vt)} \cdot \phi(0,\cdot) }
	\br
	 &\geq \intTauOfB{ \mytangle{\vr s(\vr, \vt)} \partial_t \phi + \mytangle{\vr s(\vr, \vt) \vu- \frac{\kappa \bD_{\vt}}{\vt} } \cdot \Grad \phi } \br
	 &+ \intTauOf{ \mytangle{ \frac{1}{\vt}\left(\bS(\bD_{\vu}): \bD_{\vu} + \frac{\kappa\abs{\bD_{\vt}}^2}{\vt} \right)}\phi }
\end{align}
for a.a. $\tau \in (0,T)$ and any $\phi \in C_c^1([0,T] \times \Of), \, \phi \geq 0$.
	
\item {\bf Ballistic energy inequality}:	
	For any
\begin{equation}\label{tvB}
\hvt \in C^1([0,T] \times \Ov{\Of}),\ \inf \hvt > 0,\ \hvt|_{\partial \Of} = \vtB,
\end{equation}
there exists a dissipation defect measure
	\begin{equation*}
		\mathfrak{B}_{\hvt} \in L^{\infty}(0,T; \mathcal{M}^+(\Ov{\Of}))
	\end{equation*}
such that
	\begin{align}\label{dmv_E}
		&\intOf{ \myTauangle{\frac{1}{2} \vr |\vu|^2 + \vr e(\vr,\vt) - \hvt(\tau,\cdot) \vr s(\vr, \vt) } } + \int_{\Ov{\Of}} d \mathfrak{B}_{\hvt}(\tau)
		 \br
		 &+ \int_{0}^{\tau} \intOf{ \mytangle{\frac{1}{\vt} \left( \bS(\bD_{\vu}): \bD_{\vu} +\frac{\kappa\abs{\bD_{\vt}}^2}{\vt}\right)} \hvt } \dt
		 \br
		 & \leq \intOf{ \myZangle{\frac{1}{2} \vr |\vu|^2 + \vr e(\vr,\vt) - \hvt(0,\cdot) \vr s(\vr, \vt) } }
		 \br
		 & - \int_0^\tau \intOfB{ \mytangle{\vr s(\vr, \vt)} \partial_t \hvt + \mytangle{\vr s(\vr, \vt) \vu } \cdot
					\Grad \hvt - \mytangle{\frac{\kappa\bD_{\vt}}{\vt}} \cdot \Grad \hvt } \dt
	\end{align}
and
	 \begin{equation}
	 	\Bigabs{ \int_{\Of}\Grad \bfphi : d\mathfrak{R}(\tau) } \aleq \norm{ \bfphi }_{C^1(\Of)} \int_{\Of} d \mathfrak{B}_{\hvt}(\tau) \quad \mbox{for any } \bfphi \in C^1(\Of).
	 \end{equation}
\end{itemize}
\end{Definition}

\subsection{Weak convergence}\label{sec-converge-of}
In this section we consider the limit process with $\TS, h, \penl \to 0$.
Thanks to uniform bounds \eqref{ap1} and the Fundamental Theorem on Young measure \cite{Ball:1989} we obtain that, up to a subsequence, the numerical solutions
$\{ \vrh, \vuh, \vth, \Dhuh, \Gradd \vth \}_{\TS, h, \penl \searrow 0}$ in the limit for $\TS, h, \penl \to 0$ generate a Young measure $\{\mathcal{V}_{t,x}\}_{(t,x)\in (0,T)\times\Of}$, whose support satisfies
\begin{equation*}
\mbox{supp}[\mathcal{V}_{t,x}] \subset \left\{ \vr, \vu, \vt, \bD_{\vu}, \bD_{\vt} \, \bigg| \, 0 < \Un{\vr}\leq \vr \leq \Ov{\vr}, \, 0 < \Un{\vt} \leq \vt \leq \Ov{\vt}, \, \vu \in \R^d, \bD_{\vu} \in \R^{d\times d}_{sym}, \bD_{\vt} \in \R^d \right\}
\end{equation*}
for a.a. $(t,x) \in (0,T)\times \Of$. The mapping $\mathcal{V}_{t,x} : (t,x) \in (0,T)\times \Of \longmapsto \mathcal{P}(\mathcal{F})$ is weakly-$(^*)$ measurable.

Furthermore, we know from a priori bound \eqref{ap4} that
\begin{align}\label{eq-convegence-os}
\vuh \to 0 \ \mbox{ strongly in}\ L^2((0,T)\times \Os; \R^d), \quad
\vth \to \vtB \ \mbox{ strongly in}\ L^2((0,T)\times \Os; \R^d).
 \end{align}
Combining with the density consistency \eqref{cP1_D} we obtain
\begin{align}\label{eq-convegence-os-1}
&\partial_t \vr = 0 \ \mbox{in}\ \mathcal{D}'((0,T) \times \Os ) ~
\ \mbox{yielding}
~ \ \mytangle{\vr}= \vr_{0}^s \ \ \mbox{in}\ \Os \quad \mbox{for all } t\in(0,T).
\end{align}
We let additionally
\begin{align}\label{HP-ic}
\vr_{0}^s = \begin{cases}
 0 & \ \mbox{if} \ \vtB = \vtB(t, x), \\
 \vr_{0}^s(x) > 0 & \ \mbox{if} \ \vtB = \vtB(x).
\end{cases}
\end{align}
Consequently,
\begin{itemize}
\item the compatibility condition \eqref{dmv_cc} can be derived as follows.
 Thanks to the uniform bounds \eqref{ap} and the projection errors \eqref{proj}, we obtain the compatibility condition on $\tor$:
\begin{equation}\label{compatibility-td}
\begin{aligned}
&-\intTTd{ \mytangle{\vu} \cdot \Div \mathbb{T} } = \intTTd{\mytangle{ \bD_{\vu}} : \mathbb{T} } ,
\\
&-\intTTd{\mytangle{ \vt - \hvt} \cdot \Div \bfphi } = \intTTd{\mytangle{ \bD_{\vt}-\Grad \hvt } \cdot \bfphi }
\end{aligned}
\end{equation}
for any $ \mathbb{T} \in C^{1}([0,T]\times \tor;\R^{d\times d}_{sym})$, $\bfphi \in C^{1}([0,T]\times \tor;\R^{d})$ and $\hvt \in C^1([0,T] \times \Ov{\Of}),\ \hvt|_{\Os} = \vtB$. Combining with \eqref{eq-convegence-os} we have
\begin{align*}
&\mytangle{ \bD_{\vu}} = 0, \quad \mytangle{ \bD_{\vt}} = \Grad \vtB \quad \mbox{in}\ \Os \quad \mbox{for all } t\in(0,T).
\end{align*}
Hence, combining \eqref{compatibility-td} with
\begin{align*}
&\intTOs{ \mytangle{\vu} \cdot \Div \mathbb{T} } = \intTOs{\mytangle{ \bD_{\vu}} : \mathbb{T} } = 0,
\br
& \intTOs{\mytangle{ \vt - \hvt} \cdot \Div \bfphi } = \intTOs{\mytangle{ \bD_{\vt} - \Grad \hvt} \cdot \bfphi } = 0
\end{align*}
we have the compatibility condition \eqref{dmv_cc}.

\item the continuity equation \eqref{dmv_d} follows from the density argument for the test functions and \eqref{HP-ic}, i.e.
\begin{align*}
& \lim_{\TS,h,\penl\to 0} \left( \left[ \intOs{ \vrh\phi } \right]_{t=0}^{t=\tau} -
 \intTauOs{ \vrh \partial_t \phi } \right) = 0,
\quad \lim_{\TS,h,\penl\to 0} \intTauOs{ \vrh \vuh \cdot \Grad \phi } = 0.
\end{align*}

\item the momentum equation \eqref{dmv_m} and entropy inequality \eqref{dmv_s} are directly obtained by applying the density argument for the test functions. We note that
\begin{align*}
& \mathfrak{R}(t) = \Ov{\vr \vu \otimes \vu} - \mytangle{\vr \vu \otimes \vu} \in L^{\infty}(0,T; \mathcal{M}(\Of; \mathbb{R}^{d\times d}_{\mbox{sym}})), \br
& {\mbox and} \quad \vrh \vuh \otimes \vuh \ \longrightarrow \ \Ov{\vr \vu \otimes \vu} \ \mbox{weakly in}\ L^{\infty}(0,T; \mathcal{M}(\Of; \mathbb{R}^{d\times d}_{\mbox{sym}}))
\mbox{ as } (h,\TS,\penl) \to 0.
\end{align*}

\item the ballistic energy inequality \eqref{dmv_E} can be derived as follows. Specifically, it is easy to check
\begin{align*}
& \frac{1}{\penl} \intTauOshB{ |\vuh|^2 + \frac{(\vth-\vthB)^2 }{\vth} } \geq 0, \quad \intTauOs{ \frac{\hvt}{\vth }\difuh} \geq 0,
\br
& \lim_{\TS,h,\penl\to 0} \intTauOs{ \frac{ \kappa\hvt}{\vth \vthout } \; \abs{\Gradd \vth}^2 } \geq \intTauOs{ \frac{ \kappa}{ \vtB } \; \abs{\Grad \vtB}^2 },
\br
&\lim_{\TS,h,\penl\to 0}\intTauOs{ \vrh s_h \partial_t \hvt } = 0,
\quad
\lim_{\TS,h,\penl\to 0} \intTauOs{ \vrh s_h \vuh \cdot \Grad \hvt } = 0,
\br
&\lim_{\TS,h,\penl\to 0} \intTauOs{ \frac{\kappa}{\vth} \; \Gradd \vth \cdot \Grad \hvt} = \intTauOs{ \frac{ \kappa}{\vtB } \; \abs{\Grad \vtB}^2 },
\br
&\lim_{\TS,h,\penl\to 0} \intOsB{ \frac{1}{2} \vrh^0 |\vuh^0 |^2 + c_v \vrh^0 \vth^0 - \vrh^0 s_h^0 \hvt(0,\cdot) }
= \intOsB{ c_v \vr_0^s \vtB - \vr_0^s s(\vr_0^s,\vtB(0,\cdot) ) \vtB(0,\cdot) },
\br
&\lim_{\TS,h,\penl\to 0} \intOs{ \left( \frac{1}{2} \vrh |\vuh |^2 + c_v \vrh \vth - \vrh s_h \hvt \right)(\tau,\cdot) }
= \intOsB{ c_v \vr_0^s \vtB - \vr_0^s s(\vr_0^s,\vtB) \vtB}
\end{align*}
and
\begin{align*}
&\lim_{\TS,h,\penl\to 0} \intTauOfB{ \frac{ \kappa\hvt}{\vth \vthout } \; \abs{\Gradd \vth}^2 + \frac{\hvt}{\vth }\difuh}
\br
&\quad\geq \intTauOf{ \mytangle{\frac{1}{\vt} \left( \bS(\bD_{\vu}): \bD_{\vu} +\frac{\kappa\abs{\bD_{\vt}}^2}{\vt}\right)} \hvt },
\br
&\lim_{\TS,h,\penl\to 0} \intTauOfB{ \vrh s_h \partial_t \hvt + \vrh s_h \vuh \cdot \Grad \hvt - \frac{\kappa}{\vth} \; \Gradd \vth \cdot \Grad \hvt}
\br
&\quad= \int_0^\tau \intOfB{ \mytangle{\vr s(\vr, \vt)} \partial_t \hvt + \mytangle{\vr s(\vr, \vt) \vu } \cdot
					\Grad \hvt - \mytangle{\frac{\kappa\bD_{\vt}}{\vt}} \cdot \Grad \hvt } \dt,
\br
&\lim_{\TS,h,\penl\to 0} \intOfB{ \frac{1}{2} \vrh^0 |\vuh^0 |^2 + c_v \vrh^0 \vth^0 - \vrh^0 s_h^0 \hvt(0,\cdot) }
= \intOf{ \myZangle{\frac{1}{2} \vr |\vu|^2 + \vr e(\vr,\vt) - \hvt \vr s(\vr, \vt) } } ,
\br
&\lim_{\TS,h,\penl\to 0} \intOf{ \left( \frac{1}{2} \vrh |\vuh |^2 + c_v \vrh \vth - \vrh s_h \hvt \right)(\tau,\cdot) }
\br
&\quad= \int_{\Ov{\Of}} d \mathfrak{B}_{\hvt}(\tau) + \intOf{ \myTauangle{\frac{1}{2} \vr |\vu|^2 + \vr e(\vr,\vt) - \hvt \vr s(\vr, \vt) } }
\end{align*}
with
\begin{align*}
& \mathfrak{B}_{\hvt}(\tau) := \frac{1}{2} \Ov{\vr |\vu |^2} - \myTauangle{\frac{1}{2} \vr |\vu|^2 } \in L^{\infty}(0,T; \mathcal{M}^+(\Ov{\Of}))
\br
&\mbox{and} \quad \vrh |\vuh |^2 \ \longrightarrow \ \Ov{\vr |\vu |^2} \ \mbox{weakly in}\ L^{\infty}(0,T; \mathcal{M}^+(\Ov{\Of}))
\mbox{ as } (\TS, h, \penl) \to 0.
\end{align*}
Collecting the above formulae and \eqref{HP-ic} we obtain the ballistic energy inequality \eqref{dmv_E}.
\end{itemize}
Summarizing the obtained results we have derived the following convergence theorem.
\begin{Theorem}\rm\label{THM2}
(\textbf{Weak convergence})
Let $(\vrh, \vuh, \vth)$ be a numerical solution obtained by the finite volume scheme \eqref{VFV} with $(\TS, h,\penl) \in (0,1)^3$ and $-1 < \alpha <1$.
Let the numerical density $\vrh$ and temperature $\vth$ be uniformly bounded, i.e. there exist $\Un{\vr}, \Ov{\vr}, \Uvt, \Ovt $ such that
\begin{equation*}
		 0< \Un{\vr} \leq \vrh \leq \Ov{\vr} ,\ 0< \Uvt \leq \vth \leq \Ovt \ \mbox{ uniformly for }\ \TS, h, \penl \to 0.
\end{equation*}

Then, up to a subsequence, the family
$\{ \vrh, \vuh, \vth, \Dhuh, \Gradd \vth \}_{\TS, h, \penl \searrow 0}$ generates a Young measure $\{\mathcal{V}_{t,x}\}_{(t,x)\in (0,T)\times\Of}$ in the limit as $\TS, h, \penl \to 0$ such that $h^3/\penl \to 0$. Here $\{\mathcal{V}_{t,x}\}$ is a DMV solution of the Dirichlet problem \eqref{pde}--\eqref{ic} in the sense of Definition \ref{DMV}.	
\end{Theorem}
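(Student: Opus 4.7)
The plan is a Young measure compactness argument combined with passing to the limit in the consistency identities of Section~\ref{sec_con}. First, I would invoke the uniform bounds of Lemma~\ref{lm_ub} (recall the hypothesis $0<\Un{\vr}\le\vrh\le\Ov{\vr}$, $0<\Uvt\le\vth\le\Ovt$) together with the Fundamental Theorem on Young measures of Ball~\cite{Ball:1989} to extract, along a subsequence, a parametrized family $\{\mathcal{V}_{t,x}\}_{(t,x)\in(0,T)\times\Of}$ of probability measures on the phase space $\mathcal{F}$. The two-sided bounds on $\vrh,\vth$ force $\mbox{supp}\,\mathcal{V}_{t,x}$ into the restricted set stated in Section~\ref{sec-converge-of}, and the weak-$(^*)$ measurability follows from the standard Young measure construction. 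The uniform estimate \eqref{ap1} on $\Dhuh$ and $\Gradd\vth$ also provides the Young measure slots for $\bD_{\vu}$ and $\bD_{\vt}$.

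Next, I would exploit the penalty bound \eqref{ap4} to deduce $\vuh\to 0$ and $\vth\to\vtB$ strongly in $L^2((0,T)\times\Os)$. Combined with the continuity consistency \eqref{cP1_D} (taking test functions supported in $\Os$), this gives $\partial_t\vr=0$ in $\mathcal{D}'((0,T)\times\Os)$, hence $\mytangle{\vr}=\vr_0^s$ on $\Os$. This is essential for obtaining the compatibility condition \eqref{dmv_cc}: I would first pass to the limit in the discrete identities that define $\Dhuh$ and $\Gradd\vth$ integrated against smooth test functions on the whole torus, then subtract the $\Os$-contribution, which vanishes in the limit thanks to the strong convergence above. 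In particular, $\mytangle{\bD_{\vu}}=0$ and $\mytangle{\bD_{\vt}}=\Grad\vtB$ on $\Os$.

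I would then verify the four dynamical statements of Definition~\ref{DMV} by passing to the limit $\TS,h,\penl\to 0$ in the consistency identities \eqref{cP1_D}, \eqref{cP2_D}, \eqref{cP3_D} of Lemma~\ref{lem_C1} and the ballistic energy consistency \eqref{cP4} of Lemma~\ref{lem_C4}. The continuity equation \eqref{dmv_d} follows from \eqref{cP1_D} by linear passage to the limit; the momentum equation \eqref{dmv_m} arises from \eqref{cP2_D}, where the lack of strong compactness of $\vrh\vuh\otimes\vuh$ is encoded in the Reynolds defect measure
\[
\mathfrak{R}(t)=\overline{\vr\vu\otimes\vu}-\mytangle{\vr\vu\otimes\vu}\in L^{\infty}(0,T;\mathcal{M}(\Of;\R^{d\times d}_{\mbox{sym}})),
\]
with $\vrh\vuh\otimes\vuh\rightharpoonup\overline{\vr\vu\otimes\vu}$ weakly-$(^*)$. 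The entropy inequality \eqref{dmv_s} follows from \eqref{cP3_D} by convexity and weak lower semicontinuity. The ballistic inequality \eqref{dmv_E} follows from \eqref{cP4} with the dissipation defect measure $\mathfrak{B}_{\hvt}(\tau)=\tfrac12\overline{\vr|\vu|^2}-\myTauangle{\tfrac12\vr|\vu|^2}$; the estimate relating $\mathfrak{R}$ to $\mathfrak{B}_{\hvt}$ is a direct consequence of the quadratic form $\vr\vu\otimes\vu$ being controlled by $\vr|\vu|^2$ together with the uniform $\vrh$-bound.

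The main obstacle is ensuring that \emph{all} consistency error terms vanish simultaneously in the limit. Inspecting the error bounds in Lemmas~\ref{lem_C1} and~\ref{lem_C4}, the most restrictive contribution is the boundary-penalization term scaling like $h^{3/2}\penl^{-1/2}$, which vanishes precisely under the coupling hypothesis $h^3/\penl\to 0$ assumed in the statement; the remaining contributions $\TS, h, h^{1\pm\alpha}, h^{(1+\alpha)/2}$ all vanish under $-1<\alpha<1$. A secondary subtlety is the restriction from $\tor$ to the physical domain $\Of$: for the compatibility condition I need to argue on $\tor$ and then subtract the $\Os$-part via \eqref{eq-convegence-os}, while for the momentum and entropy relations the test functions are already compactly supported in $\Of$ so this issue does not arise. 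With these two points handled, the proof reduces to collecting the limits displayed in Section~\ref{sec-converge-of} and matching them against Definition~\ref{DMV}.
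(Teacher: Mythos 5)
Your proposal is correct and follows essentially the same route as the paper: Young measure extraction from the uniform bounds via Ball's theorem, strong convergence on $\Os$ from the penalty estimate \eqref{ap4}, the compatibility condition obtained on $\tor$ and then restricted by subtracting the vanishing $\Os$-contribution, limit passage in the consistency identities with the Reynolds and dissipation defect measures defined exactly as in Section~\ref{sec-converge-of}, and the coupling $h^3/\penl \to 0$ identified as the condition killing the $h^{3/2}\penl^{-1/2}$ consistency errors. No substantive differences from the paper's argument.
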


\subsection{Strong convergence}\label{convergence-strong}
In this section we study the convergence of the numerical approximations towards a strong solution. From now on we focus on the time independent boundary condition, i.e. $\vtB = \vtB(x)$, for simplicity.
To begin, we first introduce the class of a strong solution of the NSF system \eqref{pde}, see \cite{Valli:1982a,Valli:1982b} and \cite{FLM_NSF}.
\begin{Definition}[{\bf Strong solution}]\label{def-ss}
 Let $\Of \subset \R^d, d=2,3,$ be a bounded domain with a smooth boundary $\partial \Of \in C^k$.
Let the initial/boundary data $(\vr_0, \vu_0,\vt_0,\vtB) $ satisfy
\begin{equation}\label{ic-compat}
\begin{aligned}
&(\vr_0, \vu_0,\vt_0) \in W^{k,2}(\R^{d+2}), \quad \quad \vtB|_{\partial \Of} \in W^{k+1/2,2}(\partial \Of),
\\
& \Grad p(\vr_0, \vtB) = \Div \mathbb{S}(\bD(\vu_0)), \quad 	
 \Div \vc{q} (\vtB) = \mathbb{S} (\bD(\vu_0)) : \bD(\vu_0) - p(\vr_0, \vtB) \Div \vu_0 \ \ \mbox{for}\
	x \in \partial \Of.
\end{aligned}
\end{equation}
We say that $(\vr, \vu, \vt)$ is the strong solution of the Navier--Stokes-Fourier problem \eqref{pde} if
\begin{equation}\label{STC}
\begin{aligned}
&\vr \in C(0,T; W^{k,2}(\Of)) \cap C^1(0,T; W^{k-1,2}(\Of)),
\\&
\vu \in C(0,T; W^{k,2}(\Of;\R^d)) \cap L^2(0,T; W^{k+1,2}(\Of;\R^d))\cap W^{1,2}(0,T; W^{k-1,2}(\Of;\R^d)),
\\&
\vt \in C(0,T; W^{k,2}(\Of)) \cap L^2(0,T; W^{k+1,2}(\Of)) \cap W^{1,2}(0,T; W^{k-1,2}(\Of)), \quad \quad k \geq 4
\end{aligned}
\end{equation}
and equations \eqref{pde} are satisfied pointwise.
\end{Definition}

\begin{Remark}\label{rmk-strong}
Applying the Sobolev embedding theorem we have $(\vr,\vu,\vt)\in L^{\infty}(0,T; W^{2,\infty}(\Of;\R^{d+2}))$.
Furthermore, following the proof in \cite[Lemma 2.1]{Valli-Zajaczkowski:1986} we obtain $\partial_t^2(\vr,\vu,\vt)\in L^2(0,T;W^{k-3,2}(\Of;\R^{d+2}))$ and $\partial_t(\vr,\vu,\vt)\in L^{\infty}(0,T;W^{k-2,2}(\Of;\R^{d+2})) \hookrightarrow \hookrightarrow L^{\infty}((0,T)\times \Of;\R^{d+2})$.
\end{Remark}

\begin{Theorem}\rm\label{THM3}
(\textbf{Strong convergence})
 Let $\Of \subset \R^d, d=2,3,$ be a bounded domain with a smooth boundary $\partial \Of \in C^k$, $k\geq 4$.
Let the initial/boundary data $(\vr_0, \vu_0,\vt_0,\vtB) $ satisfy \eqref{ic-compat}.
Let $(\vrh, \vuh, \vth)$ be a numerical solution obtained by the finite volume scheme \eqref{VFV} with $(\TS,h,\penl) \in (0,1)^3$, $-1 < \alpha <1$.

Under the assumption
\begin{align*}
 0< \Un{\vr} \leq \vrh \leq \Ov{\vr}, \quad
 0< \Un{\vt} \leq \vth \leq \Ov{\vt}, \quad
  \abs{\vuh} \leq \Ov{u} \quad \mbox{ uniformly for }\ \TS,h,\penl \to 0
\end{align*}
we have that the Dirichlet problem \eqref{pde} admits a strong solution $(\vr, \vu,\vt)$ in the sense of Definition \ref{def-ss}.
Moreover, the FV solutions $\{ \vrh, \vuh, \vth \}_{h,\penl\searrow 0}$ converge strongly in the following sense
\begin{align*}
		\vrh &\longrightarrow \vr \quad \quad \mbox{strongly in}\ L^p((0,T)\times\Of), \br
		\vuh &\longrightarrow \vu \quad \quad \mbox{strongly in}\ L^{p}((0,T)\times\Of; \R^d), \br
		\vth &\longrightarrow \vt \quad \quad \mbox{strongly in}\ L^p((0,T)\times\Of)
  \qquad \mbox{ as } \TS,h,\penl \to 0
\end{align*}
for any $p \in [1,\infty)$.
\end{Theorem}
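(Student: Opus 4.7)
The overall strategy is to combine three ingredients in order to collapse the Young measure of Theorem \ref{THM2} onto a Dirac mass concentrated at a globally defined strong solution: (i) local-in-time existence of a strong solution, guaranteed by \cite{Valli:1982a,Valli:1982b,Valli-Zajaczkowski:1986} under the compatibility assumption \eqref{ic-compat}; (ii) the DMV--strong uniqueness principle of Chaudhuri \cite{Chaudhuri:2022} applied to the DMV solution produced by Theorem \ref{THM2}; and (iii) the conditional regularity result of Basari\'c et al.~\cite{BFM} to upgrade the local strong solution to a global one. Once this identification is in place, the uniform $L^\infty$ bounds will promote the Young--measure convergence to strong $L^p$ convergence.

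First I would invoke \cite{Valli:1982a,Valli:1982b,Valli-Zajaczkowski:1986} to obtain a maximal time $T^{\ast}\in(0,T]$ and a strong solution $(\vr,\vu,\vt)$ of \eqref{pde}--\eqref{ic} on $[0,T^{\ast})$ in the class \eqref{STC}; the compatibility condition \eqref{ic-compat} together with the smoothness of $\partial\Of\in C^k$, $k\ge 4$, is precisely what this classical result requires. In parallel, Theorem \ref{THM2} delivers a DMV solution $\{\mathcal{V}_{t,x}\}$ of the Dirichlet problem on the whole of $(0,T)\times\Of$ (the extra bound $|\vuh|\le\Ov{u}$ is not needed there). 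Applying the DMV--strong uniqueness principle from \cite{Chaudhuri:2022} on $[0,T^{\ast})$, and using the fact that both solutions share the same initial data, I obtain
\[
\mathcal{V}_{t,x}=\delta_{(\vr,\vu,\vt,\bD(\vu),\Grad\vt)(t,x)},\qquad \mathfrak{R}\equiv 0,\qquad \mathfrak{B}_{\hvt}\equiv 0\qquad\text{on }[0,T^{\ast})\times\Of.
\]

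The step I expect to be the main obstacle is the passage from local to global existence. Under the standing assumption $\Un{\vr}\le\vrh\le\Ov{\vr}$, $\Uvt\le\vth\le\Ovt$ and $|\vuh|\le\Ov{u}$, the weak-$(\ast)$ limits inherit these bounds, so $\mytangle{\vr}\in[\Un{\vr},\Ov{\vr}]$, $\mytangle{\vt}\in[\Uvt,\Ovt]$ and $|\mytangle{\vu}|\le\Ov{u}$ almost everywhere; together with the Dirac identification above, this means the local strong solution is pointwise bounded on $[0,T^{\ast})$. The conditional regularity theorem of \cite{BFM} then rules out blow-up of the higher Sobolev norms under such an $L^{\infty}$ bound and permits continuation of the strong solution beyond $T^{\ast}$, contradicting its maximality unless $T^{\ast}=T$. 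The delicate points here are the adaptation of \cite{BFM} to the present Dirichlet setting for $\vt$ (on a bounded smooth $\Of$ rather than $\tor$) and the verification that the bounded DMV solution we have at hand is admissible as input to that regularity criterion; the DMV--strong identification on $[0,T^{\ast})$ is exactly what bridges the gap, since conditional regularity is typically stated for weak, not DMV, solutions.

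Once $\mathcal{V}_{t,x}=\delta_{(\vr,\vu,\vt,\bD(\vu),\Grad\vt)(t,x)}$ on the whole of $(0,T)\times\Of$, the standard characterization of Dirac Young measures yields convergence in measure of $(\vrh,\vuh,\vth)$ to $(\vr,\vu,\vt)$ on $(0,T)\times\Of$. The uniform $L^{\infty}$ bounds in the hypothesis together with Vitali's convergence theorem then upgrade this to strong convergence in $L^p((0,T)\times\Of)$ for every $p\in[1,\infty)$, which is precisely the claim. No contamination from the penalization region arises, since $\vuh\to 0$ and $\vth\to\vtB$ strongly on $\Os$ by \eqref{eq-convegence-os}, and the convergence statement in the theorem is restricted to $\Of$. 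This closes the proof of Theorem \ref{THM3}.
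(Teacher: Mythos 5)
Your proposal follows exactly the paper's own argument: Theorem \ref{THM2} provides the DMV solution, local existence from \cite{Valli:1982a,Valli:1982b,Valli-Zajaczkowski:1986} plus conditional regularity from \cite{BFM} yield the global strong solution, and the DMV--strong uniqueness of \cite{Chaudhuri:2022} collapses the Young measure and gives strong convergence. Your version is simply a more detailed write-up of the same chain (making explicit the continuation argument past $T^{\ast}$ and the Vitali step that the paper leaves implicit), so it is correct and essentially identical in approach.
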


\begin{proof}
According to Theorem \ref{THM2} we have the global existence of a DMV solution. Then, the global existence of a strong solution $(\vr, \vu,\vt)$ directly follows from the local existence of a strong solution proven in \cite{Valli:1982b,Valli:1982a,Valli-Zajaczkowski:1986} and the conditional regularity shown in \cite{BFM}. Further, by the virtue of the weak-strong uniqueness \cite{Chaudhuri:2022} we obtain the strong convergence of our numerical solutions.
\end{proof}
\medskip

Having shown the strong convergence of the penalization FV approximations $(\vrh,\vuh,\vth)_{\TS,h,\penl \searrow 0}$, the next interesting question is to study the convergence rate.
To this end, we first define a suitable extension of the strong solution on $\tor$.
\begin{Definition}[Extension of the strong solution]
\label{def_ES}
Let $(\vr,\vu,\vt)$ be the strong solution of Dirichlet NSF system \eqref{pde} in the sense of Definition \ref{def-ss}. 
We say that $(\tvr, \tvu,\tvt)$ is the extension of the strong solution $(\vr, \vu,\vt)$ if
\begin{equation}\label{EXT1}
(\tvr, \tvu,\tvt)(t,x) =
\begin{cases}
(\vr_{0}^s , \, 0, \, \vt_B) & \mbox{if} \ \vx \in \Os,\\
(\vr, \, \vu,\vt) &\mbox{if} \ \vx \in \Of,
\end{cases} \quad \mbox{ for any } t \in [0,T].
\end{equation}
Here $\vr_0^s=\vr_0(x)$ is the initial extension given in \eqref{ppde_ic} such that $\vr_0^s \in W^{2,\infty}(\Os), \, \tvr_{0} \in W^{1,\infty}(\tor)$.
\end{Definition}

\begin{Remark}
 With the above definition and Remark \ref{rmk-strong}
we obtain the following regularity of $\tvr, \tvu, \tvt$: $(\tvr, \tvu, \tvt)|_{\Os} \in W^{2,\infty}(\Os;\R^{d+2})$ and $(\tvr, \tvu, \tvt) \in L^{\infty}(0,T;W^{1,\infty}(\tor;\R^{d+2}))$, $\partial_t (\tvr, \tvu, \tvt) \in L^{\infty}((0,T)\times \tor;\R^{d+2})$, $ \partial_t^2 (\tvr, \tvu, \tvt) \in L^{2}((0,T)\times \tor;\R^{d+2}).$
\end{Remark}

\section{Error estimates}\label{sec_EE}
In this section we analyze the error between a numerical solution $(\vrh,\vuh,\vth)$ of the penalty FV method \eqref{VFV} and a target strong solution $(\tvr,\tvu,\tvt)$ by means of the relative energy functional
\begin{align*}
\RE{\vrh,\vuh,\vth}{\tvr,\tvu,\tvt}
= \intTdB{\frac12 \vrh |\vuh - \tvu|^2 + \REH{\vrh,\vth}{\tvr,\tvt}}
\end{align*}
with
\begin{align*}
\REH{\vr,\vt}{\tvr,\tvt} = \Hc(\vr,\vt) - \frac{\pd \Hc(\tvr,\tvt)}{\pd \vr}(\vr -\tvr) - \Hc(\tvr,\tvt) \quad \mbox{and} \quad
\hHc(\vr,\vt) = \vr( c_v \vt - \hvt s(\vr,\vt) ).
\end{align*}

\begin{Remark}
Under the assumption on the boundedness of the discrete density and temperature \eqref{HP} it holds that
\begin{equation}\label{EN}
\begin{aligned}
& \RE{\vrh,\vuh,\vth}{\tvr,\tvu,\tvt} \approx \norm{ (\vrh,\vu_h,\vth) - (\tvr,\tvu,\tvt) }_{L^2(\tor)}^2.
\end{aligned}
\end{equation}
\end{Remark}

\begin{Theorem}[{\bf Error estimates I}]\label{thm_EEB}
 Let $\Of \subset \R^d, d=2,3,$ be a bounded domain with a smooth boundary $\partial \Of \in C^k$, $k \geq 4$. Let the initial/boundary data $(\vr_0, \vu_0,\vt_0,\vtB) $ satisfy \eqref{ic-compat}.
Suppose that the NSF system \eqref{pde} admits a strong solution $(\vr,\vu,\vt)$ in the sense of Definition \ref{def-ss}.
Let $(\vrh, \vuh,\vth)$ be a numerical solution obtained by the penalty FV method \eqref{VFV} with $(\TS,h,\penl) \in (0,1)^3$ and $0 \leq \alpha <1$.
In addition, let the numerical density $\vrh$ and temperature $\vth$ be uniformly bounded, i.e. there exist $\Un{\vr}, \Ov{\vr}, \Uvt, \Ovt $ such that
\begin{equation*}
		 0< \Un{\vr} \leq \vrh \leq \Ov{\vr} ,\ 0< \Uvt \leq \vth \leq \Ovt \ \mbox{ uniformly for }\ \TS,h,\penl \to 0.
\end{equation*}

Then there exists a positive number
\begin{align*}
 c=c \big( T,\Un{\vr}, \Ov{\vr}, \Un{\vt}, \Ov{\vt},& \norm{( \vr, \vu, \vt)}_{L^{\infty}(0,T;W^{2,\infty}(\Of;\R^{d+2}))}, \norm{\partial_t( \vr, \vu, \vt)}_{L^{\infty}((0,T)\times \Of;\R^{d+2})},
\br
& \norm{\partial_t^2( \vr, \vu, \vt)}_{L^{2}((0,T)\times \Of;\R^{d+2})}, \norm{(\vr_0^s,\vt_B)}_{W^{2,\infty}(\Os;\R^2)}\big)
\end{align*}
such that it holds for any $\tau \in [0,T]$
\begin{align*}
&\RE{\vrh,\vuh,\vth}{\tvr,\tvu,\tvt}(\tau) +\frac{1}{\penl} \intTauOshB{ |\vuh|^2 + (\vth-\vthB)^2 }
\\&\quad + \intTauTd{\left| \Dhuh - \bD(\tvu) \right|^2 } + \intTauTd{|\Gradd\vth-\Grad\tvt|^2}
\\& \leq c( \TS + h^{1+\alpha} + h^{(1-\alpha)/2} + \penl^{-1} h^3 +\penl h^{-1}) \leq c( \TS + h^{(1-\alpha)/2} + \penl^{-1} h^3 +\penl h^{-1}).
\end{align*}
Consequently, we obtain the optimal error $(\TS + h^{1/2})$ for the relative energy with $\alpha =0$ and $\penl = h^2$.
Furthermore, we have
\begin{align*}
&\norm{ (\vrh,\vu_h,\vth) - (\tvr,\tvu,\tvt) }_{L^\infty(0,T; L^2(\tor;\R^{d+2}))} + \norm{(\Dhuh,\Gradd \vth) - (\bD(\tvu),\Grad \tvt) }_{L^2((0,T)\times \tor;\R^{(d+1)\times d})}
\br
& \aleq \TS^{1/2} + h^{(1-\alpha)/4} + \penl^{-1/2} h^{3/2} +\penl^{1/2} h^{-1/2}.
\end{align*}
\end{Theorem}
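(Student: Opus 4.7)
The plan is to run the classical relative energy argument in the discrete setting, using the ballistic energy as the central inequality. First I would construct a suitable extension $(\tvr,\tvu,\tvt)$ of the strong solution to the whole torus $\tor$ (cf.\ Definition of the extension given above), so that $\tvu|_{\Os}=0$, $\tvt|_{\Os}=\vtB$, $\tvr|_{\Os}=\vr_0^s$, and $(\tvr,\tvu,\tvt)\in L^{\infty}(0,T;W^{1,\infty}(\tor))$ with $\partial_t(\tvr,\tvu,\tvt)\in L^{\infty}$ and $\partial_t^2(\tvr,\tvu,\tvt)\in L^2$. The goal is to obtain a Gronwall-type inequality of the form
\begin{equation*}
\RE{\vrh,\vuh,\vth}{\tvr,\tvu,\tvt}(\tau) + \mathcal{D}(\tau) \le c\!\int_0^{\tau}\!\RE{\vrh,\vuh,\vth}{\tvr,\tvu,\tvt}\,\dt + \mathcal{E}(\TS,h,\penl),
\end{equation*}
where $\mathcal D$ collects the non-negative dissipation and penalty contributions on the left of the stated estimate and $\mathcal E$ is the consistency/projection error to be tracked. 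Gronwall then yields the claimed bound, and \eqref{EN} (equivalence of relative energy and the squared $L^2$ distance under the uniform bounds \eqref{HP}) delivers the second inequality.

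Second, I would expand $D_t \RE{\vrh,\vuh,\vth}{\tvr,\tvu,\tvt}$ by splitting the relative energy into its kinetic part $\tfrac12 \vrh|\vuh-\tvu|^2$ and its thermodynamic part $\mathbb{E}_{\mathcal H}(\vrh,\vth\mid\tvr,\tvt)$ with $\hvt=\tvt$. For the kinetic part I would apply the energy balance of Lemma~3.2 together with the momentum consistency \eqref{cP2_D} tested with $\tvu$ and the continuity consistency \eqref{cP1_D} tested with $\tfrac12|\tvu|^2$; note that $\tvu\in C^1_c([0,T]\times\Of)$ in the strong-solution sense, but since the extension is globally defined on $\tor$ we can equally use the global momentum equation (the penalty term appearing in \eqref{VFV_M} vanishes against $\tvu|_{\Os}=0$ up to projection errors). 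For the thermodynamic part I would use the ballistic energy consistency \eqref{cP4} with $\hvt=\tvt$, together with the continuity consistency tested with $\partial_{\vr}H_{\hvt}(\tvr,\tvt)$. Adding everything, the usual cancellations produce quadratic remainders in $(\vrh-\tvr,\vuh-\tvu,\vth-\tvt)$, which are controlled by the relative energy via convexity of $H_{\hvt}$ and the lower/upper bounds \eqref{HP}; the exact computation follows the pattern of the barotropic case \cite{LSY_penalty} and of \cite{FLMS_FVNSF}, adapted to the Fourier term that is handled through the dissipative square $\int \tfrac{\tvt}{\vth\vthout}|\Gradd\vth|^2$ on the left.

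Third, the penalty contributions must be treated carefully; this is where the competing terms $\penl^{-1}h^3$ and $\penl h^{-1}$ arise. Since $\tvu|_{\Os}=0$ and $\tvt|_{\Os}=\vtB$, the penalty terms $\penl^{-1}\int_{\Osh}\vuh\cdot\tvu$ and $\penl^{-1}\int_{\Osh}(\vth-\vtB)(\tvt-\vtB)$ are genuinely nonzero only on the mismatch layer $\Osh\setminus\Os\subset\Och$, which has measure of order $h$ by Remark~2.4. Splitting these penalty cross-terms on $\Osh\setminus\Os$ and estimating via Cauchy--Schwarz against the strong-solution profile yields
\begin{equation*}
\frac{1}{\penl}\Bigabs{\intTauOshB{\vuh\cdot\tvu+(\vth-\vtB)(\tvt-\vtB)}}
\le \delta \cdot\frac{1}{\penl}\intTauOshB{|\vuh|^2+(\vth-\vtB)^2}+\frac{c}{\delta}\cdot\frac{h}{\penl}\cdot h^{2},
\end{equation*}
where the $h^2$ comes from pointwise vanishing of $\tvu,(\tvt-\vtB)$ of order $h$ on the boundary layer and produces the term $\penl^{-1}h^3$; absorbing the $\delta$-part on the left leaves precisely this contribution. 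The term $\penl h^{-1}$ comes instead from the consistency error $h^{3/2}\penl^{-1/2}$ in \eqref{cP2_D}, \eqref{cP3_D}, \eqref{ec4} squared via Young's inequality before applying Gronwall. All remaining consistency errors $\TS+h+h^{1-\alpha}+h^{1+\alpha}+h^{(1-\alpha)/2}$ drop in from Lemmas~4.1 and~4.2; using $0\le\alpha<1$ one checks that $h^{(1-\alpha)/2}$ and $h^{1+\alpha}$ are dominant and $h^{(1-\alpha)/2}\ge h^{1+\alpha}$ so the simplified bound on the right follows.

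The main technical obstacle I anticipate is the bookkeeping of the boundary-layer terms on $\Osh\setminus\Os$: one must simultaneously exploit that the strong-solution extension vanishes (resp.\ equals $\vtB$) at $\partial\Of$ to gain an extra $h$ in the Lagrange remainder of the Taylor expansion, and that the discrete dissipation together with the penalty control provides a priori bounds strong enough to absorb the cross-terms without destroying the $\penl$-dependence. Once this balance is set, the relative energy inequality closes, optimization in $\delta,\penl$ gives the rate, and the final $L^{\infty}_tL^2_x$-plus-$L^2_tL^2_x$ conclusion follows from taking the essential supremum in $\tau$ of the relative energy inequality and invoking \eqref{EN}.
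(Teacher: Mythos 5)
Your high-level strategy — relative energy driven by the ballistic energy consistency with $\hvt=\tvt$, continuity tested with $\tfrac12|\tvu|^2-\partial_\vr\Hc(\tvr,\tvt)$, momentum tested with the extension $\tvu$, penalty cross-terms localized to the mismatch layer $\Osh\setminus\Os$, then Gronwall — matches the paper, and your derivation of the $\penl^{-1}h^3$ contribution from $|\tvu|\aleq h$ on the layer of measure $\aleq h$ is exactly right. However, there are two concrete problems.

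First, your attribution of the $\penl h^{-1}$ term is wrong. Squaring $h^{3/2}\penl^{-1/2}$ gives $h^3\penl^{-1}$, not $\penl h^{-1}$; moreover the consistency errors enter the relative energy inequality linearly (the inequality is a sum of consistency formulations), so there is no squaring step before Gronwall — the square root only appears at the very end when passing from the relative energy to the $L^2$ distances. In the paper the $\penl h^{-1}$ term comes from the \emph{compatibility errors} $R_C'=R'_{\bS}+R_9-R'_{\vt}$: the strong solution satisfies the PDE only on $\Of$, so terms such as $\int\vuh\cdot\Div\tbS\,\mathds{1}_{\Of}$ must be converted to discrete integrations by parts $\int\vuh\cdot\Divmesh\Piw\tbS$, and the discrepancy lives on the strip $\Osh\setminus\OOh$ where the discrete operators are only $O(h^{-1})$; bounding $h^{-1}\int_{\Osh\setminus\OOh}|\vuh|$ (and the analogous temperature term) by Young's inequality against the penalty dissipation $\penl^{-1}\|\vuh\|^2_{L^2(\Osh)}$ is what costs $\penl h^{-1}$. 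Your proposal never produces these terms.

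Second, and this is the missing idea: the extension $(\tvr,\tvu,\tvt)$ is only $W^{1,\infty}(\tor)$ globally (it is $W^{2,\infty}$ separately on $\Of$ and $\Os$ but its second derivatives jump across $\partial\Of$), so it is \emph{not} an admissible test function either for the $\Of$-restricted consistency formulations \eqref{cP2_D}--\eqref{cP3_D} (which need $C^2_c(\Of)$ — $\tvu$ is not compactly supported with two derivatives) or for the torus formulations \eqref{cP2-tor}, \eqref{cP4} (which need $L^\infty W^{2,\infty}(\tor)$). You cannot "equally use the global momentum equation" without addressing this. The paper resolves it by the three-way splitting $\OIh\cup\Och\cup\OOh$, re-proving the consistency and projection estimates with $W^{2,\infty}$ bounds away from $\Och$ and only $W^{1,\infty}h^{-1}$ bounds on $\Och$ (cf.\ \eqref{EXTE4}), and then absorbing all $\Och$-contributions into either the relative energy, the viscous dissipation, or the penalty dissipation via \eqref{lm} and \eqref{lm_ms}. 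You flag the boundary-layer bookkeeping as the main obstacle, but without this specific mechanism the argument does not close, and it is precisely this mechanism that generates the $\penl h^{-1}$ term you mislabeled.
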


\begin{proof}
We present the main idea of the proof and postpone the technical details of calculations to Appendix \ref{app-ee}.

Firstly, we derive the relative energy inequality, cf. \eqref{REI-1}, by summing the consistency formulations \eqref{cP1-tor}, \eqref{cP2-tor} and \eqref{cP4}, where the test functions are taken as $\left(\frac{|\tvu|^2}{2}-\frac{\pd \Hc(\tvr,\tvt)}{\pd \vr}, \tvu, \tvt \right)$, respectively.

Secondly, we estimate the right-hand-side (RHS) of relative energy inequality \eqref{REI-1}, which includes the consistency errors and the compatibility errors.
We point out that the consistency errors estimates stated in \eqref{ec1-tor}, \eqref{ec2-tor}, and \eqref{ec4} require the test functions to be in the class of at least $L^\infty{(0,T;W^{2,\infty}(\tor))}$, which is not satisfied by the strong solution $(\tvr,\tvu,\tvt)$. We solve this problem by a suitable mesh splitting as stated in Definition \ref{def_ES}. Consequently, our aim is to control all RHS terms either directly or by the left-hand-side terms, see Appendices \ref{app-cer} and \ref{app-cer-1}.

Finally, we reach the desired version of the relative energy inequality, cf. \eqref{RE04}. Applying Gronwall's lemma we finish the proof.
\end{proof}

\begin{Remark}
 As a byproduct of Theorem \ref{thm_EEB} we obtain that our numerical approximations  converge strongly to the strong solution on its lifespan. This result holds  under the assumption on the boundedness of numerical density and temperature, cf. \eqref{HP}.
\end{Remark}

\begin{Theorem}[{\bf Error estimates II}]\label{thm_EEB_1}
Let $\Of \subset \R^d, d=2,3,$ be a bounded domain with a smooth boundary $\partial \Of \in C^k$, $k \geq 4$. Let the initial/boundary data $(\vr_0, \vu_0,\vt_0,\vtB) $ satisfy \eqref{ic-compat}.

Let $(\vrh, \vuh, \vth)$ be a numerical solution obtained by the penalty FV method \eqref{VFV} with $(\TS,h,\penl) \in (0,1)^3$ and $-1 < \alpha <1$.
Under the assumption
\begin{align*}
 0< \Un{\vr} \leq \vrh \leq \Ov{\vr}, \quad
 0< \Un{\vt} \leq \vth \leq \Ov{\vt}, \quad
  \abs{\vuh} \leq \Ov{u} \quad \mbox{ uniformly for }\ \TS,h,\penl \rightarrow 0
\end{align*}
we have
\begin{align*}
&\RE{\vrh,\vuh,\vth}{\tvr,\tvu,\tvt}(\tau) +\frac{1}{\penl} \intTauOshB{ |\vuh|^2 + (\vth-\vthB)^2 }
\\&\quad + \intTauTd{\left| \Dhuh - \bD(\tvu) \right|^2 } + \intTauTd{|\Gradd\vth-\Grad\tvt|^2}
\\& \leq c( \TS + h^{1+\alpha} + h^{1-\alpha} + \penl^{-1} h^3 +\penl h^{-1})
\end{align*}
and
\begin{align*}
&\norm{ (\vrh,\vu_h,\vth) - (\tvr,\tvu,\tvt) }_{L^\infty(0,T; L^2(\tor;\R^{d+2}))} + \norm{(\Dhuh,\Gradd \vth) - (\bD(\tvu),\Grad \tvt) }_{L^2((0,T)\times \tor;\R^{(d+1)\times d})}
\br
& \leq c( \TS^{1/2} + h^{(1+\alpha)/2} + h^{(1-\alpha)/2} + \penl^{-1/2} h^{3/2} +\penl^{1/2} h^{-1/2}),
\end{align*}
where
$$ c=c \left( T,\Un{\vr}, \Ov{\vr}, \Un{\vt}, \Ov{\vt},\norm{(\vr_0,\vu_0,\vt_0)}_{W^{4,2}(\Of;\R^{d+2})}, \norm{\vt_B}_{W^{9/2,2}(\partial\Of)}, \norm{(\vr_0^s,\vt_B)}_{W^{2,\infty}(\Os;\R^2)}\right)
$$
and
$(\vr, \vu,\vt)$ is the strong solution of the Dirichlet problem \eqref{pde} in the sense of Definition \ref{def-ss}. Moreover, we obtain the optimal error $(\TS + h)$ for the relative energy with $\alpha =0$ and $\penl = h^2$,  which is improved in comparison to Theorem \ref{thm_EEB}.
\end{Theorem}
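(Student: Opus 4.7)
The plan is to mimic the proof of Theorem \ref{thm_EEB} (the relative energy method combined with the consistency results of Lemmas \ref{lem_C1} and \ref{lem_C4}), while exploiting the additional $L^\infty$ bound $\abs{\vuh}\leq \Ov u$ in two distinct ways: first, to guarantee global existence of a strong solution on $[0,T]$ so that the relative energy functional is well-defined on the whole time interval, and second, to sharpen the consistency estimates that were responsible for the $h^{(1-\alpha)/2}$ loss in Theorem \ref{thm_EEB}.

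Step one is to obtain the global strong solution, exactly as in the proof of Theorem \ref{THM3}. Theorem \ref{THM2} yields a global DMV solution whose generating Young measure has compact support thanks to the uniform $L^\infty$ bounds on $(\vrh,\vuh,\vth)$. Combining the local existence of Valli \cite{Valli:1982a,Valli:1982b,Valli-Zajaczkowski:1986}, the conditional regularity of \cite{BFM}, and the DMV--strong uniqueness principle of Chaudhuri \cite{Chaudhuri:2022}, the DMV solution is promoted to a strong solution $(\vr,\vu,\vt)$ on $[0,T]$ with the regularity of Definition \ref{def-ss}, and its extension $(\tvr,\tvu,\tvt)$ to $\tor$ is taken from Definition \ref{def_ES}. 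In particular, all norms of the strong solution that would otherwise appear in the constant are controlled purely by the initial/boundary data listed in the statement.

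Step two is to sharpen the consistency estimates. The suboptimal $h^{(1-\alpha)/2}$ term in $e_{\vm}$ (and its counterpart in $e_B$) arises from the upwind convective remainder, schematically of the form $\int_0^\tau\int_{\faces} \abs{\avs{\vuh}\cdot\vn}\,\abs{\jump{\vuh}}\,h\,\norm{\bfphi}_{W^{2,\infty}}\,\ds\,\dt$, which in Theorem \ref{thm_EEB} is controlled only by Cauchy--Schwarz against the dissipation bound \eqref{ap3}, paying a loss of $h^{1/2}$. Using $\abs{\avs{\vuh}\cdot\vn}\leq \Ov u$, this factor is no longer needed: one factor $h$ is paired with the weight $h^{\alpha/2}$ from \eqref{ap3}, while the remaining factor $h^{-\alpha/2}$ is absorbed into the constant, upgrading the remainder to order $h^{1-\alpha}$. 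The other contributions in $e_\vr$, $e_{\vm}$, $e_s$ and $e_B$ already decay at least as $h^{1-\alpha}$, so the overall consistency rate becomes $h^{1-\alpha}$ as claimed.

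Step three combines the sharpened consistency with the relative energy computation of Appendix \ref{app-ee}. Testing the ballistic inequality of Lemma \ref{lem_C4} with $\hvt=\tvt$, the momentum consistency of Lemma \ref{lem_C1} with $\bfphi=\tvu$, and the density consistency with $\phi=\tfrac12\abs{\tvu}^2-\partial_\vr \Hc(\tvr,\tvt)$, and subtracting the strong-solution identities \eqref{pde}, one obtains an inequality for $\RE{\vrh,\vuh,\vth}{\tvr,\tvu,\tvt}$. Convexity of $\Hc$ together with the uniform bounds \eqref{HP} dominates the pointwise quadratic remainders by the relative energy itself, while the viscous and heat-flux differences $\norm{\Dhuh-\bD(\tvu)}_{L^2}^2$ and $\norm{\Gradd\vth-\Grad\tvt}_{L^2}^2$ are absorbed on the left. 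Gronwall's lemma then closes the estimate. The main obstacle, inherited from Theorem \ref{thm_EEB}, is the interface layer $\Och$: the extension $(\tvr,\tvu,\tvt)$ is only Lipschitz across $\partial\Of$, so the consistency errors of Lemmas \ref{lem_C1}--\ref{lem_C4}, which formally require $W^{2,\infty}$ test functions, must be split using Definition \ref{def_ES2} and the volume bound $\abs{\Och}\aleq h$, as in Appendices \ref{app-cer}--\ref{app-cer-1}; the decisive new point is that the uniform $L^\infty$ bound on $\vuh$ keeps the boundary layer contributions of order $h$ rather than $h^{1/2}$, which is exactly what allows the upgrade from $h^{(1-\alpha)/2}$ to $h^{1-\alpha}$ to propagate through to the final inequality and yield the optimal rate $\TS+h$ at $\alpha=0$, $\penl=h^2$.
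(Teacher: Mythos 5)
Your overall strategy (global strong solution via Theorem \ref{THM3}, then the relative energy argument of Appendix \ref{app-ee} with sharpened consistency estimates) is the right skeleton, but the two concrete improvements that constitute the actual content of the proof are respectively garbled and missing. First, the mechanism you give for upgrading $h^{(1-\alpha)/2}$ to $h^{1-\alpha}$ does not work. The offending term is $E_2(\vm_h,\tvu)=\frac14\intfaces{\jump{\vuh}\cdot\vc{n}\,\jump{\vrh\vuh}\,\jump{\Pim\tvu}}$, a product of \emph{two} jumps, not the single-jump expression $\int_{\faces}\abs{\avs{\vuh}\cdot\vc{n}}\abs{\jump{\vuh}}h$ you write down; and simply dropping a bounded factor $\abs{\avs{\vuh}\cdot\vc{n}}\leq\Ov{u}$ cannot change a rate (your bookkeeping ``absorb $h^{-\alpha/2}$ into the constant'' is not legitimate, since $h^{-\alpha/2}$ is unbounded for $\alpha>0$; applied honestly it still yields $h\cdot h^{-1/2}\cdot h^{-\alpha/2}=h^{(1-\alpha)/2}$). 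The actual gain comes from the product rule $\jump{\vrh\vuh}=\jump{\vrh}\avs{\vuh}+\avs{\vrh}\jump{\vuh}$: with $\abs{\avs{\vuh}}\aleq 1$ one lands on $\jump{\vuh}\jump{\vrh}$ and $\jump{\vuh}^2$, each factor controlled in $L^2$ by $h^{-\alpha/2}$ through \eqref{ap3}, so that the bilinear structure delivers $h\cdot h^{-\alpha}=h^{1-\alpha}$. You also mislocate the second source of $h^{(1-\alpha)/2}$: it is not in $e_B$ but in the compatibility error $I_{\vu,2}$ of \eqref{Iu2}, whose improvement needs both $\abs{\vuh}\aleq 1$ on the boundary strip and the extra regularity $\vu\in L^2(0,T;W^{5,2}(\Of))$ to handle $\Divmesh\Piw\tbS-\Divh\Pim\tbS$ away from $\Och$.

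Second, your proposal says nothing about the term $E_3(\vm_h,\tvu)=-h^{\alpha+1}\intTd{\vrh\vuh\cdot\Delta_h\Pim\tvu}$, which in Theorem \ref{thm_EEB} is bounded by $h^{\alpha}\big(h+\intTau{\RE{\vrh,\vuh,\vth}{\tvr,\tvu,\tvt}}\big)$ and is the reason that theorem requires $\alpha\geq 0$ (for $\alpha<0$ the factor $h^{\alpha}$ in front of the relative energy destroys the Gronwall argument). Theorem \ref{thm_EEB_1} claims the full range $-1<\alpha<1$, and the whole point of the hypothesis $\abs{\vuh}\leq\Ov{u}$ here is that it allows the direct bound $\abs{E_3(\vm_h,\tvu)}\aleq h^{\alpha+1}\intTauTd{\abs{\Delta_h\Pim\tvu}}\aleq h^{1+\alpha}$ via \eqref{EXTD} and \eqref{EXTE4}, removing the constraint $\alpha\geq 0$. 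Without this step your argument, even if the $E_2$ estimate were repaired, proves the statement only for $\alpha\in[0,1)$ and so does not establish the theorem as stated.
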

\begin{proof}
The optimal first order convergence rate is improved in two aspects.
Firstly, with $\abs{\vuh} \aleq 1$ we improve the terms of order $h^{(1-\alpha)/2}$ to $h^{1-\alpha}$, which origins from $E_2(\vrh \vuh, \tvu) $ and $I_{\vu,2}$, cf. \eqref{E_nf} and \eqref{Iu2}, respectively. The term $E_2(\vrh \vuh, \tvu) $ can be now estimated as follows
\begin{align*}
&\abs{E_2(\vrh \vuh, \tvu)} = \frac14 \abs{
\intTau{\intfaces{ \jump{\vuh} \cdot \vc{n} \jump{\vrh \vuh } \jump{ \Pim \tvu} } }
}
\\&
\aleq
\abs{
\intTau{\intfaces{ \jump{\vuh} \cdot \vc{n} \jump{\vrh }\avs{\vuh} \jump{ \Pim \tvu} } }
}+
\abs{
\intTau{\intfaces{ \jump{\vuh} \cdot \vc{n} \avs{\vrh}\jump{ \vuh } \jump{ \Pim \tvu } } }
}
\\& \aleq
h\left(\intTau{\intfaces{ \jump{\vuh}^2} }\right)^{1/2}\left(\intTau{\intfaces{ \jump{\vrh}^2} }\right)^{1/2}
+h \intTau{\intfaces{ \jump{\vuh}^2} }
 \aleq h^{1-\alpha} .
\end{align*}
Moreover, thanks to the regularity of the strong solution $\vu \in L^2(0,T;W^{5,2}(\Of))$, applying estimates \eqref{EXTE} -- \eqref{lm_ms} we improve the estimate of the compatibility error $I_{\vu,2} $ as follows
\begin{align*}
&\abs{I_{\vu,2}} = \abs{ \intTauTd{\vuh \cdot ( \Divmesh \Piw \tbS - \Divh \Pim \tbS ) }}
\br&
\aleq h \norm{\vu}_{L^1(0,T;W^{3,2}(\Of;\R^d))} \norm{\vuh}_{L^\infty(0,T;L^2(\tor;\R^d))} + h^{-1} \norm{\tvu}_{L^\infty(0,T;W^{1,\infty}(\tor;\R^d))} \intTauOch{\abs{\vuh}}
\br&
\aleq h + h^{-1} \left( \intTau{ \int_{\Omega_h^{C,1}} h\abs{\Gradh \vuh} \dx } + \intTau{ \int_{\Omega_h^{C,2}} \abs{\vuh} \dx} \right)
\br&
\aleq h^{(1-\alpha)}+ h+ \delta \norm{ \Dhuh - \bD(\tvu)}_{L^2((0,\tau) \times\tor)}^2 + \frac{\penl}{h} + \delta \frac{\norm{\vuh}_{L^2((0,\tau)\times\Osh)}^2}{\penl}.
\end{align*}
Here $\Omega_h^{C,1}$ (resp. $\Omega_h^{C,2}$) is obtained by shifting $\Och$ one cell (resp. two cells) towards $\Os$. We note that we have used $\Omega_h^{C,2} \subset \Omega_h^{C,1} \subset \Osh$ for the last inequality.

Secondly, we improve the term of order $h^{\alpha+1}+ h^\alpha \intTau{\RE{\vrh,\vuh,\vth}{\tvr,\tvu,\tvt}}$ to $h^{1+\alpha}$, which origins from
 $E_3(\vrh \vuh, \phi) $.
Applying $\abs{\vuh} \aleq 1$ together with \eqref{EXTD} and \eqref{EXTE4} we get a new estimate:
\begin{align*}
&\abs{E_3(\vrh \vuh, \tvu )}
=h^{\alpha+1}\abs{ \intTauTd{ \vrh \vuh \cdot \Delta_h \Pim \tvu }}
 \aleq h^{\alpha+1} \intTauTd{ \abs{ \Delta_h \Pim \tvu } } \aleq h^{1+\alpha}.
\end{align*}
Consequently, we do not need the constraint of $\alpha \geq 0$ and complete the proof.
\end{proof}
\medskip

\begin{Remark}
We point out that our previous result \cite[Theorem 5.2]{Basaric} can be improved by controlling $R_s$ with $D_s$, cf. \eqref{ee-rs}.
Specifically,
letting $\Of \equiv \tor$ we obtain for the NSF system with periodic boundary conditions the following results:
\begin{itemize}
\item Under the same condition as Theorem \ref{thm_EEB} it holds
\begin{equation*}
\begin{aligned}
&
\RE{\vrh,\vuh,\vth}{\tvr,\tvu,\tvt}(\tau) + \intTauTd{\left| \Dhuh - \bD(\tvu) \right|^2 } + \intTauTd{|\Gradd\vth-\Grad\tvt|^2}
\\& \leq c( \TS + h^{1+\alpha} + h^{(1-\alpha)/2}) \quad \quad \mbox{for }\; \alpha \in (-1,1).
\end{aligned}
\end{equation*}
Consequently, taking $\alpha = -1/3$ we obtain the optimal error $(\TS + h^{2/3})$ for the relative energy.

 \item Under the assumption of Theorem \ref{thm_EEB_1} it holds
\begin{equation*}
\begin{aligned}\label{ee_re-1}
&
\RE{\vrh,\vuh,\vth}{\tvr,\tvu,\tvt}(\tau) + \intTauTd{\left| \Dhuh - \bD(\tvu) \right|^2 } + \intTauTd{|\Gradd\vth-\Grad\tvt|^2}
\\& \leq c( \TS + h^{1+\alpha} + h^{1-\alpha}) \quad  \quad \mbox{for }\; \alpha \in (-1,1).
\end{aligned}
\end{equation*}
Consequently, taking $\alpha = 0$ we obtain the optimal error $(\TS + h)$ for the relative energy.

 \end{itemize}

\end{Remark}

\section*{Acknowledgements}
The authors sincerely thank E.~Feireisl (Prague) for stimulating discussions.

\appendix
\section{Proof of the positivity of density and temperature}\label{app-positivity}
In this section we prove the positivity of density and temperature.
To this end, we present firstly the corresponding renormalized equations.  Specifically, they are the renormalized continuity equation \cite[Lemma 8.3]{FeLMMiSh}
and the renormalized internal energy equation \cite[Lemma 3.3]{HS_NSF}.
To begin, we introduce the notation
\begin{equation*}
 E_f(v_1|v_2) = f(v_1) - f'(v_2)(v_1-v_2) - f(v_2), \quad f \in C^1(\R).
\end{equation*}
\begin{Lemma}[Renormalized continuity equation {\cite[Lemma 8.3]{FeLMMiSh}}]\label{lem_r2}
Let $(\vrh ,\vuh)$ satisfy \eqref{VFV_D}. Then for any $\phi_h \in Q_h$ and any function $B\in C^1(\R)$
we have
\begin{align} \label{renormalized_density}
& \intTd{ {\rm D}_t B(\vrh) \phi_h } - \intfaces{ \Up[B(\vrh), \vuh] \cdot \jump{\phi_h} } + \intTd{ \phi_h \; (\vrh B'(\vrh) - B(\vrh) ) \; \Divh \vuh }
\nonumber \\
& = - \frac{1}{\Delta t}\intTd{\phi_h \EB{\vrh^\triangleleft |\vrh} } - h^\alpha \intfaces{ \jump{\vrh} \jump{B'(\vrh) \phi_h}}
-\intfaces{ |\avs{\vuh } \cdot \vn | \phi_h^{\rm down} \EB{\vrhup|\vrhdown} }.
\end{align}
\end{Lemma}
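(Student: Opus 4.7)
The plan is to test the discrete continuity equation \eqref{VFV_D} with the piecewise-constant function $B'(\vrh)\phi_h$, which lies in $Q_h$ since $\vrh>0$ by Lemma \ref{lem_p1} and $B\in C^1(\R)$. This gives the identity
\[
\intTd{D_t \vrh \cdot B'(\vrh)\phi_h} - \intfaces{\Up[\vrh,\vuh]\jump{B'(\vrh)\phi_h}} + h^\alpha\intfaces{\jump{\vrh}\jump{B'(\vrh)\phi_h}}=0,
\]
and I would transform each of its three groups of terms into the corresponding pieces of \eqref{renormalized_density}.

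For the discrete time derivative, I would rely on the pointwise identity built into the definition of $E_B$,
\[
B'(a)(a-b)=B(a)-B(b)+E_B(b|a)\qquad \text{for all } a,b\in\R.
\]
Setting $a=\vrh$, $b=\vrh^\triangleleft$ and dividing by $\TS$ yields $B'(\vrh)\,D_t\vrh = D_t B(\vrh)+\TS^{-1}E_B(\vrh^\triangleleft|\vrh)$, which produces both $D_t B(\vrh)\,\phi_h$ on the left of \eqref{renormalized_density} and the first dissipation term on the right. The artificial-viscosity contribution $h^\alpha\jump{\vrh}\jump{B'(\vrh)\phi_h}$ is moved verbatim to the right-hand side with a sign change, giving the second dissipation term.

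The main obstacle is the upwind convective flux $-\intfaces{\vrh^{\rm up}\avs{\vuh}\cdot\vn\jump{B'(\vrh)\phi_h}}$. Face by face, I would rewrite
\[
\vrh^{\rm up}\avs{\vuh}\cdot\vn\jump{B'(\vrh)\phi_h} = |\avs{\vuh}\cdot\vn|\,\vrh^{\rm up}\!\left(B'(\vrh^{\rm down})\phi_h^{\rm down}-B'(\vrh^{\rm up})\phi_h^{\rm up}\right)
\]
and apply the Taylor-type identity $\vrh^{\rm up}B'(\vrh^{\rm down})=\vrh^{\rm down}B'(\vrh^{\rm down})+B(\vrh^{\rm up})-B(\vrh^{\rm down})-E_B(\vrh^{\rm up}|\vrh^{\rm down})$, obtained from the definition of $E_B$. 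After elementary bookkeeping, each face integrand splits as
\[
\Up[B(\vrh),\vuh]\jump{\phi_h}\;+\;\avs{\vuh}\cdot\vn\,\jump{\phi_h\,(\vrh B'(\vrh)-B(\vrh))}\;-\;|\avs{\vuh}\cdot\vn|\phi_h^{\rm down}E_B(\vrh^{\rm up}|\vrh^{\rm down}),
\]
where the first piece gives the desired upwind flux of $B(\vrh)$ and the last piece, non-negative when $B$ is convex, provides the third dissipation term on the right of \eqref{renormalized_density}. The middle piece, summed over faces, reassembles by the discrete Gauss identity $(\Divh\vuh)_K=\frac{|\sigma|}{|K|}\sum_{\sigma\in\facesK}\avs{\vuh}\cdot\vn_{K,\sigma}$ into the volume integral $\intTd{\phi_h(\vrh B'(\vrh)-B(\vrh))\Divh\vuh}$ appearing on the left of \eqref{renormalized_density}. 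Collecting the transformed pieces completes the proof. The delicate point is the correct orientation bookkeeping in the face-wise identity, i.e.\ tracking the up/down versus in/out sign conventions so that the remainder term carries exactly the factor $\phi_h^{\rm down}$ and that the divergence reconstruction reproduces the product with $\phi_h|_K$ on each cell.
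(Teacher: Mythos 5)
Your proposal is correct and follows exactly the standard argument behind the cited result (\cite[Lemma 8.3]{FeLMMiSh}), which the paper itself does not reprove: test \eqref{VFV_D} with $B'(\vrh)\phi_h$, absorb the time increment via $B'(\vrh)D_t\vrh = D_tB(\vrh)+\TS^{-1}E_B(\vrh^\triangleleft|\vrh)$, split the upwind flux face-by-face using $\vrh^{\rm up}B'(\vrh^{\rm down})=\vrh^{\rm down}B'(\vrh^{\rm down})+B(\vrh^{\rm up})-B(\vrh^{\rm down})-E_B(\vrh^{\rm up}|\vrh^{\rm down})$, and reassemble the middle piece into $\intTd{\phi_h(\vrh B'(\vrh)-B(\vrh))\Divh\vuh}$ via the definition of $\Divh$; I checked the sign and up/down--in/out bookkeeping and it closes. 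One minor remark: you need not (and should not) invoke Lemma~\ref{lem_p1} to justify the test function, since that positivity result is itself derived from this renormalized equation, but as $B\in C^1(\R)$ the function $B'(\vrh)\phi_h\in Q_h$ is well defined without any positivity, so no circularity actually arises.
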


\begin{Lemma}
[Renormalized internal energy equation {\cite[Lemma 3.3]{HS_NSF}}]\label{lem_r3}
Let $(\vrh , \vuh , \vth )$ satisfy equation \eqref{VFV}. Then for any $\phi_h \in Q_h$, and any function $\chi \in C^1(\R)$ it holds
\begin{align}\label{eq_renormalized_energy}
& c_v\intTd{ {\rm D}_t (\vrh \chi(\vth) ) \phi_h } - c_v\intfaces{ \Up[\vrh \chi(\vth), \vuh] \cdot \jump{ \phi_h} } + \intfaces{ \frac{\kappa}{h} \jump{\vth} \jump{\chi'(\vth) \phi_h} }
\nonumber \\
= &\intTd{ (\bS_h - p_h\bI): \Gradh \vuh \chi'(\vth) \phi_h} - \frac{c_v}{\Delta t} \intTd{\vrh^{\vartriangleleft} \phi_h\Echi{\vth^\triangleleft|\vth} }
\nonumber \\
& - c_v h^\ep \intfacesB{ \jump{\vrh} \jump{\big(\chi(\vth) - \chi'(\vth) \vth\big) \phi_h } + \jump{\vrh \vth} \jump{\chi'(\vth) \phi_h } }
\nonumber \\
& - c_v\intfaces{ |\avs{\vuh } \cdot \vn | \phi_h^{\rm down} \vrhup \Echi{\vthup|\vthdown} }
- \frac{1}{\penl}\intOsh{ (\vth-\vthB) \chi'(\vth ) \phi_h }.
\end{align}
\end{Lemma}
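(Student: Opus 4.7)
The plan is to derive \eqref{eq_renormalized_energy} by inserting the admissible test function $\chi'(\vth)\phi_h\in Q_h$ (which lies in $Q_h$ since $\vth$ is piecewise constant and $\chi\in C^1$) into the discrete internal energy equation \eqref{VFV_E}, and then reorganizing each nonlinear term so that, up to Taylor-type residuals of the form $E_\chi(a|b) = \chi(a)-\chi'(b)(a-b)-\chi(b)$, the quantity $\vrh\chi(\vth)$ appears in place of $\vrh\vth$. The surplus terms of the form $[\chi'(\vth)\vth - \chi(\vth)]\,\mathcal{L}\vrh$, where $\mathcal{L}$ stands for the discrete time/convection/numerical-viscosity operator acting on $\vrh$, are then cancelled by subtracting an appropriate consequence of Lemma~\ref{lem_r2} applied with $B(r)=r$, i.e.\ the continuity equation itself.

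Concretely, I would treat each contribution separately. For the time-derivative term I use the algebraic identity
\[
\chi'(\vth)\,D_t(\vrh\vth) = D_t(\vrh\chi(\vth)) + [\chi'(\vth)\vth - \chi(\vth)]\,D_t\vrh + \frac{\vrh^{\triangleleft}}{\TS}\,E_\chi(\vth^{\triangleleft}|\vth),
\]
which follows from the decomposition $\vrh\chi(\vth) - \vrh^{\triangleleft}\chi(\vth^{\triangleleft}) = \chi(\vth)(\vrh-\vrh^{\triangleleft}) + \vrh^{\triangleleft}(\chi(\vth)-\chi(\vth^{\triangleleft}))$ together with the Taylor expansion of $\chi(\vth^{\triangleleft})$ centered at $\vth$. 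For the upwind convective term $\Up[\vrh\vth,\vuh]\jump{\chi'(\vth)\phi_h}$ I use, face by face, the identity $\chi'(\vth^{\rm down})(\vth^{\rm up}-\vth^{\rm down}) = \chi(\vth^{\rm up})-\chi(\vth^{\rm down})-E_\chi(\vth^{\rm up}|\vth^{\rm down})$, which produces the desired flux of $\vrh\chi(\vth)$, the stated residual $|\avs{\vuh}\cdot\vn|\phi_h^{\rm down}\vrh^{\rm up}E_\chi(\vth^{\rm up}|\vth^{\rm down})$, and a surplus in $\chi(\vth)\vrh$ of divergence type. The numerical viscosity term $h^\alpha\jump{\vrh\vth}\jump{\chi'(\vth)\phi_h}$ is split by the jump product rule $\jump{ab}=\avs{a}\jump{b}+\jump{a}\avs{b}$ and expanded analogously to yield the two stated face residuals. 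The diffusive term $\kappa\Gradd\vth\cdot\Gradd(\chi'(\vth)\phi_h)$ and the penalty term $\penl^{-1}(\vth-\vtB)\chi'(\vth)\phi_h$ pass through unchanged, being linear in the test function and in $\vth$ apart from the test argument.

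The main obstacle is the book-keeping: one has to verify that the three surplus contributions of the form $[\chi(\vth)-\chi'(\vth)\vth]\,\mathcal{L}\vrh$ generated in the three nonlinear steps (discrete time derivative, upwind convection, numerical viscosity) assemble exactly into one application of Lemma~\ref{lem_r2} with $B=\mathrm{id}$ and a common test function $[\chi(\vth)-\chi'(\vth)\vth]\phi_h$, so that their sum cancels pairwise against the corresponding continuity-equation contributions with no residue. The delicate point is the consistent orientation of the upwind/downwind states with the $E_\chi$-remainders (which are non-negative only for convex $\chi$) and the matching of the backward-in-time remainder $\vrh^\triangleleft E_\chi(\vth^\triangleleft|\vth)$ with the convention $D_t(fg) = g\,D_tf + f^\triangleleft\,D_tg$. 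Once this is verified, summing the cellwise balances over $\mathcal{T}_h$ and collecting the interior face sums yields precisely the identity \eqref{eq_renormalized_energy}.
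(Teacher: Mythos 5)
The paper does not actually prove this lemma itself (it is quoted from \cite{HS_NSF}), but your proposal reproduces the standard argument behind it: test \eqref{VFV_E} with $\chi'(\vth)\phi_h$, add $c_v$ times the continuity scheme \eqref{VFV_D} tested with $\big(\chi(\vth)-\chi'(\vth)\vth\big)\phi_h$, and collect the Taylor remainders $E_\chi$ in the time-difference and upwind terms — your identities for those two steps are exactly the right ones, and the diffusion, dissipation and penalty terms indeed pass through unchanged. One small correction: the two $h^\alpha$ face terms in \eqref{eq_renormalized_energy} are precisely the unmodified artificial-diffusion contributions of the two tested equations, so no jump product rule or further expansion is needed (or wanted) there.
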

\begin{Remark}\label{rmq1}
Considering $\vr$ and $p$ as the independent variables, e.g., $\vt=p/\vr$, we know that
\begin{equation*}
\nabla_{\vr}\left( \vr \chi( \vt ) \right) = \chi(\vt) - \chi'(\vt) \vt, \quad
\nabla_p\left( \vr \chi( \vt ) \right) = \chi'(\vt),
\end{equation*}
which leads to the following identity
\begin{align*}
&\intfacesB{ \jump{\vrh} \jump{\big(\chi(\vth) - \chi'(\vth) \vth\big) \phi_h } + \jump{\vrh \vth} \jump{\chi'(\vth) \phi_h } }
\\&= \intfacesB{ \jump{\vrh} \jump{\nabla_\vr(\vrh \chi(\vth)) \phi_h } + \jump{p_h} \jump{\nabla_p(\vrh \chi(\vth)) \phi_h } }.
\end{align*}
Furthermore, setting $\chi(\vt) = ([\vt]^-)^2$ it holds $\jump{\vrh} \jump{\nabla_\vr(\vrh \chi(\vth)) \phi_h } + \jump{p_h} \jump{\nabla_p(\vrh \chi(\vth))} \geq 0 $.

\end{Remark}

\subsection{Proof of Lemma \ref{lem_p1}}
We proceed to analyze the positivity of density and temperature.
For this purpose, viewing $(\vr, \vt)$ as the independent variables we continuously extend the pressure to the non-physical temperature regime ($\vt \leq 0$) as in our previous paper \cite{FLMS_FVNSF}
\begin{equation} \label{gas_law}
p= \vr \vt \quad \mbox{ for } \vt >0 \quad \mbox{ and } \quad p = 0 \quad \mbox{ for } \vt \leq 0.
\end{equation}
We point out that Lemma \ref{lem_p1} (or Lemma \ref{lem_Ap1}) confirms that our numerical approximations $\vrh$ and $\vth$ are always located in the physical regime, i.e. $\vrh> 0$ and $\vth>0$.

\begin{Lemma}[Positivity of density and temperature] \label{lem_Ap1}
Let $(\vrh ,\vuh ,\vth )$ be a numerical solution of the FV method \eqref{VFV}.
Then it holds
\begin{align*}
\vrh(t) > 0, \quad \vth(t) > 0 \quad \mbox{for all } \ t\in(0,T).
\end{align*}
Note that $\vrh^0 > 0$ and $\vth^0 > 0$ follow from the positivity of the data $\tvr^0 > 0$ and  $\tvt^0 > 0$.
\end{Lemma}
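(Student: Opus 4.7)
I would argue by induction on the time level. The base case is immediate because $\vrh^0 = \Pim\tvr_0 > 0$ and $\vth^0 = \Pim\tvt_0 > 0$ (cell averages of strictly positive bounded data are strictly positive). For the inductive step I would feed the renormalized continuity equation \eqref{renormalized_density} and the renormalized internal energy equation \eqref{eq_renormalized_energy} with the convex renormalizations $B(\vr) = \tfrac{1}{2}([\vr]^-)^2$ and $\chi(\vt) = \tfrac{1}{2}([\vt]^-)^2$, tested with $\phi_h\equiv 1$, so that the negative part of $\vrh$ and $\vth$ is trapped by a quantity that vanishes on the previous time level.

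\textbf{Density.} The function $B$ is $C^1$, convex, non-negative, vanishes on $\{\vr\ge 0\}$, $B'$ is non-decreasing with $B'(\vr)\le 0$, and one checks the identity $\vr B'(\vr) - B(\vr) = B(\vr)$. Plugging into \eqref{renormalized_density} with $\phi_h\equiv 1$: the upwind flux contribution vanishes because $\jump{1}=0$; the Taylor remainders $\EB{\vrh^\triangleleft|\vrh}$ and $\EB{\vrhup|\vrhdown}$ are non-negative by convexity of $B$; and $\jump{\vrh}\jump{B'(\vrh)}\ge 0$ by monotonicity of $B'$. With $B(\vrh^\triangleleft)\equiv 0$ from the inductive hypothesis, only
\[
\frac{1}{\TS}\intTd{B(\vrh)} + \intTd{B(\vrh)\,\Divh \vuh} \le 0
\]
survives. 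Combined with the positivity-preserving structure of the implicit discrete continuity system (off-diagonals of the linear operator acting on $\vrh^n$ are non-positive thanks to the upwind flux and the $h^\alpha$ diffusion, while the diagonal is strictly dominant after time discretisation), this forces $B(\vrh)\equiv 0$ and in fact $\vrh > 0$.

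\textbf{Temperature.} With $\vrh>0$ already secured and $\vth^\triangleleft>0$ inductively, I apply the same device to \eqref{eq_renormalized_energy} with $\chi$ as above. Since $\chi'(\vth)\le 0$, the Taylor remainders $\Echi{\vth^{\triangleleft}|\vth}$ and $\Echi{\vthup|\vthdown}$ contribute with the correct sign by convexity; the heat-flux term $\int_\faces \tfrac{\kappa}{h}\jump{\vth}\jump{\chi'(\vth)}$ is non-negative by monotonicity of $\chi'$; the stabilization term on the faces is controlled by the convexity identity stated in Remark \ref{rmq1}; and the penalty $-\tfrac{1}{\penl}(\vth-\vthB)\chi'(\vth)$ is $\le 0$ because $\vthB>0$ makes $\vth-\vthB<0$ on $\{\vth\le 0\}$ while $\chi'(\vth)\le 0$. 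Using $\chi(\vth^\triangleleft)\equiv 0$ one concludes $\intTd{\vrh\chi(\vth)}\le 0$, hence $\vth\ge 0$. Strict positivity is then bootstrapped by the implicit energy update, whose linear operator (upwind transport plus $\kappa\Delta_h$ plus penalty $\tfrac{1}{\penl}\mathds{1}_{\Osh}$) has the positivity-preserving M-structure, and the boundary penalty supplies a strictly positive forcing through $\vthB>0$.

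\textbf{Main obstacle.} The delicate point is the production term $(\bS_h - p_h\bI){:}\Gradh\vuh\,\chi'(\vth)$ in the temperature argument. Only the viscous part $\bS_h{:}\Gradh\vuh\ge 0$ has the correct sign automatically when multiplied by the non-positive $\chi'(\vth)$; the pressure work $-p_h\Divh\vuh\,\chi'(\vth)$ would otherwise have an indefinite sign on $\{\vth<0\}$ and spoil the inequality. This is precisely the role of the truncated pressure law \eqref{gas_law}: on $\{\vth\le 0\}$ we simply have $p_h=0$, eliminating the offending contribution. Since strict positivity of $\vth$ is ultimately established a posteriori, the truncation is in fact never activated on the physical solution, and the induction closes self-consistently.
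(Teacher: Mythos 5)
Your argument is essentially the paper's own proof: the same renormalizations ($B$ and $\chi$ built from the squared negative part, tested with $\phi_h\equiv 1$), the same sign analysis of the Taylor remainders, jump terms and penalty term, the same reliance on the truncated pressure law \eqref{gas_law} to kill the pressure work on $\{\vth\le 0\}$, and the same final bootstrap to strict positivity via the positivity-preserving (M-matrix) structure of the implicit upwind/diffusion operators, which is also how the cited Lemma 11.2 of \cite{FeLMMiSh} handles the density. The only minor imprecision is attributing the strict positivity of $\vth$ to the boundary penalty forcing: that term vanishes on $\Ofh$, and what actually drives strictness everywhere (and what the paper's contradiction argument at a cell with $\vth|_K=0$ uses) is the strictly positive previous-level contribution $\frac{c_v}{\TS}\vrh^\triangleleft\vth^\triangleleft>0$ on the right-hand side of the implicit update.
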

\begin{proof}
The proof of $\vrh>0$ has already been shown in \cite[Lemma 11.2]{FeLMMiSh} by means of renormalized continuity equation \eqref{renormalized_density}. In what follows we concentrate on the positivity of $\vth$.

Setting $\phi_h = 1$ and $\chi(\vt) = \frac 1 2 ([\vt]^-)^2$ in \eqref{eq_renormalized_energy} and recalling Remark \ref{rmq1} we obtain
\begin{align}\label{eq_1}
& c_v\intTd{ {\rm D}_t (\vrh \chi(\vth) ) } = - \intfaces{ \frac{\kappa}{h} \jump{\vth} \jump{\chi'(\vth) } } + \intTd{ (\bS_h - p_h\bI): \Gradh \vuh \chi'(\vth)}
\nonumber \\
&
\quad-\frac{c_v}{\Delta t} \intTd{\vrh^{\vartriangleleft} \Echi{\vth^\triangleleft|\vth} \phi_h }
- c_v\intfaces{ |\avs{\vuh } \cdot \vn | \phi_h^{\rm down} \vrhup \Echi{\vthup|\vthdown} }
\nonumber
\\&
\quad- \frac{1}{\penl}\intOsh{ (\vth-\vthB) \chi'(\vth ) }
 - c_v h^\ep \intfacesB{ \jump{\vrh} \jump{\nabla_{\vr}\left( \vrh \chi( \vth ) \right) } + \jump{p_h} \jump{\nabla_{p}\left( \vrh \chi( \vth ) \right) } }.
\end{align}
It is easy to check that
\begin{equation*}
\chi'(\vt) = \begin{cases}
0 & \mbox{if} ~\vt \geq 0,\\
\vt & \mbox{if} ~ \vt < 0.
\end{cases}
\end{equation*}
Hence, $\chi(\vt)$ is a $C^1(\R)$-convex function satisfying
\begin{align*}
\jump{\vth} \jump{\chi'(\vth) } \geq 0,\quad
\chi'(\vt) \leq 0,
\quad \chi'(\vt) p(\vr,\vt) \equiv 0, \quad \chi'(\vt) (\vt - \vtB) \geq 0.
\end{align*}
Recalling that $\bS_h : \Gradh \vuh = 2\mu \abs{\bD_h \vuh}^2 + \lambda \abs{\Divh \vuh}^2$, $E_\chi$ is convex and Remark \ref{rmq1},
we know that all terms on the right-hand-side of \eqref{eq_1} are nonpositive.
Consequently, we obtain
\begin{equation*}
\intTd{ D_t \left(\vrh \chi(\vth ) \right) } \leq 0,
\end{equation*}
which implies $\vth \geq 0$ provided $\vth^\triangleleft \geq 0.$

To conclude the proof it remains to show that $\vth \neq 0$, which we prove by contradiction. Given $\vth^\triangleleft > 0$ we assume that there exists an element $K$ such that
$\vth \big |_K = 0$. Then with the definition of pressure \eqref{gas_law} we have $p_h\big |_K = 0$ and
 the internal energy equation \eqref{VFV_E} directly gives
\begin{align*}
&-\frac{c_v}{\TS} (\vrh^\triangleleft \vth^\triangleleft)_K + c_v \sum_{ \sigma \in \pd K } \frac{|\sigma|}{|K|} (\vrh \vth )^{\rm out}
\left( [\vuh \cdot\vn]^- - h^\alpha \right)
 - \frac{\kappa}{h^2}\sum_{ \sigma \in \pd K } \frac{|\sigma|}{|K|} \vth ^{\rm out}
\br
&= 2 \mu |\Dhuh|_K^2 + \lambda |\Divh \vuh |_K^2
+ \frac{\mathds{1}_{\Osh}}{\penl} \vthB|_K,
\end{align*}
leading to the negativity of the left-hand-side and the non-negativity of the right-hand-side.
That is a contradiction and concludes the proof.
\end{proof}

\section{Some useful estimates}\label{app-ue}
In this section we prove some estimates which will be useful for the convergence analysis and error estimates, i.e. Appendices \ref{app-cf} and \ref{app-ee}.
We start by recalling the projection error of the discrete differential operators
\begin{equation}\label{proj}
\abs{ \jump{\Pim \phi } }\aleq h \norm{\Grad \phi}_{L^\infty(\tor)}, \;
\norm{ \Pim \phi - \phi }_{L^\infty(\tor)} \aleq h \norm{\Grad \phi}_{L^\infty(\tor)}, \;
\norm{\Gradd \Pim \phi }_{L^\infty(\tor)} \aleq \norm{ \Grad \phi }_{L^\infty(\tor)}
\end{equation}
for any $\phi \in W^{1,\infty}(\tor)$, cf. \cite{FeLMMiSh}. Further, the following discrete integration by parts formulae hold
\begin{subequations}\label{dis_op}
\begin{align}
& \intTd{ r_h\Divh \vv_h } = -\intTd{ \Gradh r_h \cdot \vv_h },
\\
& \intTd{ \Delta_h r_h \cdot f_h } = -\intTd{ \Gradd r_h \cdot \Gradd f_h } = \intTd{r_h \cdot \Delta_h f_h }, \label{dis_op3}
\\
&\intTd{ r_h\Div \bfphi } =    \intTd{ r_h\Divmesh \Piw \bfphi } =    - \intTd{ \Gradd r_h \cdot \Piw \bfphi }
\end{align}
\end{subequations}
for any $r_h, f_h \in Q_h$ and $\bfphi \in W^{1,1}(\tor;\R^d)$, see the preliminary chapter of Feireisl et al. \cite{FeLMMiSh}.
Note that $\Piw$ is a projection operator given by
\begin{align*}
\Piw = (\Piw^{(1)}, \cdots, \Piw^{(d)}),
\quad
\Piwi \phi(x) = \sum_{\sigma \in \facesi} \frac{\mathds{1}_{D_\sigma}(x)}{|\sigma|} \int_{\sigma} \phi \ds \quad \quad
 \mbox{for any } \phi\in W^{1,1}(\tor)
\end{align*}
and 
$\Divmesh$ is a discrete divergence operator  given by 
\[
\Divmesh  \bfphi_h  = \sumi \pdmeshi \Phi_{i,h}, \quad  \pdmeshi \Phi_{i,h}|_K = \frac{  \Phi_{i,h}|_{\sigma_K^{i+}} -\Phi_{i,h}|_{\sigma_K^{i-}}   }{h} , 
\]
where $\bfphi_h = ( \Phi_{i,h}, \ldots,  \Phi_{d,h})^T$, $\sigma_K^{i-}$ and $\sigma_K^{i+}$ are respectively the left and right faces of an element $K$ that are orthogonal to the canonical basis vector $\ve_i$.

Next, we present the properties of the extended strong solution $(\tvr,\tvu,\tvt)$ defined in \eqref{EXT1}.
\begin{Lemma}
Let $(\tvr,\tvu,\tvt)$ be the extended strong solution defined in \eqref{EXT1}. Then there hold
\begin{subequations}\label{EXTE}
\begin{equation}\label{EXTE1}
 \abs{\tvu} \aleq \norm{\tvu}_{W^{1,\infty}(\tor)} h \mbox{ on } \Och; \quad
 \abs{ \Gradh( \Pim \tvu)} + \abs{ \Gradd( \Pim \tvu)} + \abs{ \Laph \Pim \tvu } = 0 \mbox{ on } \ \OOh,
\end{equation}
\begin{equation}\label{EXTE2}
\abs{ \Gradh( \Pim f)} + \abs{\Divh(\Pim f)} + \abs{ \Gradd( \Pim f)} + \abs{ \Piw( \Grad f)} \aleq \norm{f}_{W^{1,\infty}(\tor)} \mbox{ on }\ \tor ,
\end{equation}
\begin{equation}\label{EXTE4}
\abs{ \Laph \Pim f } \aleq \begin{cases}
\norm{f}_{W^{2,\infty}(\Of)} + \norm{f}_{W^{2,\infty}(\Os)} & \mbox{on } \ \tor \setminus \Och, \\
\norm{f}_{W^{1,\infty}(\tor)} h^{-1} & \mbox{on } \ \Och,
\end{cases}
\end{equation}
\begin{equation}
\label{lm_f2}
\intTd{ \Abs{v \left(\Grad f - Df \right) }}
\aleq h\intTd{ \abs{v}} + \intOch{ \abs{v}}, \quad Df= \Piw \Grad f, \Gradd \Pim f ,\Gradh \Pim f,
\end{equation}
\begin{equation}\label{EXTE5}
\norm{\Gradh \vuh - \Grad \tvu}_{L^2(\tor)} \aleq h + \norm{\Dhuh - \bD(\tvu)}_{L^2(\tor)},
\end{equation}
where $f = \tvr,\, \tvu$ or $\tvt$.
\end{subequations}
\end{Lemma}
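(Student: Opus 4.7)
The plan is to establish the five estimates in turn, using: (i) the global regularity $(\tvr, \tvu, \tvt) \in L^\infty(0,T;W^{1,\infty}(\tor;\R^{d+2}))$ together with piecewise $W^{2,\infty}$-smoothness of the extension on $\Of$ and on $\Os$ separately; (ii) the projection estimates \eqref{proj}; (iii) the volume bound $|\Och| \aleq h$ from \eqref{EXTD}; and, for the final claim, (iv) the discrete Korn-type identity on $\tor$ already derived just before Lemma~\ref{lm_ub}, namely $\int_\tor |\Gradh \vv_h|^2 \leq 2\int_\tor |\bD_h \vv_h|^2$.

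For \eqref{EXTE1}, the first bound is a distance argument. Since $\vu|_{\partial \Of}=0$ and $\tvu\equiv 0$ on $\Os$, the extension is globally continuous with $\tvu|_{\partial \Of}=0$. For $x \in \Och$ one has $\dist(x,\partial\Of)\aleq h$ and the global Lipschitz estimate yields $|\tvu(x)| \leq \|\tvu\|_{W^{1,\infty}(\tor)}\dist(x,\partial\Of) \aleq h$. The second part is purely combinatorial: by definition of $\OOh=\Os\setminus\Och$, no neighbor $L$ of a cell $K\in\OOh$ meets $\partial\Of$, so each such neighbor lies entirely in $\Os$ where $\tvu\equiv 0$; hence $\Pim \tvu$ vanishes on the whole stencil of $K$ and every discrete derivative returns zero.

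Estimates \eqref{EXTE2} and \eqref{EXTE4} follow from \eqref{proj} together with a boundary split. Since \eqref{proj} requires only global $W^{1,\infty}$-regularity, the face-jump bound $|\jump{\Pim f}| \aleq h\|\Grad f\|_{L^\infty(\tor)}$ yields \eqref{EXTE2} directly from the definitions of $\Gradh, \Divh, \Gradd, \Piw$. For \eqref{EXTE4} I distinguish two regimes: on $\tor \setminus \Och$ the stencil of $\Laph \Pim f$ lies on a single side of $\partial\Of$, so Taylor expansion of the symmetric second differences gives $|\Laph \Pim f| \aleq \|f\|_{W^{2,\infty}(\Of)}+\|f\|_{W^{2,\infty}(\Os)}$; on $\Och$ only the Lipschitz jump bound is available, and combining it with $|\sigma|/|K| = h^{-1}$ produces the worst-case bound $|\Laph \Pim f| \aleq h^{-1}\|f\|_{W^{1,\infty}(\tor)}$.

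Estimate \eqref{lm_f2} uses the same split: pointwise $|\Grad f - Df| \aleq h\,\|f\|_{W^{2,\infty}(\Of \cup \Os)}$ on $\tor \setminus \Och$ by Taylor expansion of $Df$ around the cell center, and the trivial pointwise bound $|\Grad f - Df| \aleq \|f\|_{W^{1,\infty}(\tor)}$ on $\Och$, which contributes the residual $\int_{\Och}|v|$. For \eqref{EXTE5} I would use the decomposition $\Gradh \vuh - \Grad \tvu = \Gradh(\vuh - \Pim \tvu) + (\Gradh \Pim \tvu - \Grad \tvu)$: the discrete Korn identity (iv) applied to $\vv_h := \vuh - \Pim \tvu$ gives $\|\Gradh(\vuh-\Pim\tvu)\|_{L^2(\tor)} \aleq \|\Dhuh - \bD(\tvu)\|_{L^2(\tor)} + \|\bD(\tvu) - \bD_h \Pim \tvu\|_{L^2(\tor)}$, and both projection residuals, as well as the remaining term $\|\Gradh \Pim \tvu - \Grad \tvu\|_{L^2(\tor)}$, are controlled by the same case split used for \eqref{lm_f2}. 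The main obstacle throughout is the layer $\Och$, where $\Grad \tvu$ jumps across $\partial\Of$ so that pointwise $O(h)$ consistency of the discrete operators fails; the uniform remedy, applied in every subproof, is to trade accuracy against the smallness $|\Och| \aleq h$.
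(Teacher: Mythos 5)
Your proposal is correct and follows essentially the same route as the paper: the first four estimates are read off from the definition of the extension, the projection bounds \eqref{proj}, and the $\Och$/$(\tor\setminus\Och)$ split with $|\Och|\aleq h$, while \eqref{EXTE5} is obtained from the triangle inequality via $\Gradh(\vuh-\Pim\tvu)$ together with the discrete norm equivalence $\norm{\Gradh \vv_h}_{L^2(\tor)}\aleq\norm{\bD_h \vv_h}_{L^2(\tor)}$ and the case $v=\Grad f - Df$ of \eqref{lm_f2}. Your write-up merely makes explicit the stencil and distance arguments that the paper compresses into ``directly follow from the definition.''
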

\begin{proof}
Estimates \eqref{EXTE1} -- \eqref{EXTE4} directly follow from the definition of $(\tvr,\tvu,\tvt)$.
Further, combining the interpolation error and \eqref{EXTE2} we obtain
\begin{align*}
&\intTd{ \Abs{v \left(\Grad f - D f \right) }} = \int_{\OOh \cup \Och \cup \OIh}{ \Abs{v \left(\Grad f - D f \right) }} \dx
\br
&\aleq h \left(\norm{f}_{W^{2,\infty}(\Of)} + \norm{f}_{W^{2,\infty}(\Os)} \right)\int_{\OOh \cup \OIh}{ \Abs{v} } \dx + \norm{f}_{W^{1,\infty}(\tor)} \intOch{ \Abs{v} } \aleq h\intTd{ \abs{v}} + \intOch{ \abs{v}}.
\end{align*}
 Moreover, considering $v=\Grad f - D f $ in the above estimate we obtain
\[ \norm{\Gradh \Pim \tvu - \Grad \tvu}_{L^2(\tor;\R^d)} + \norm{\bD_h(\Pim \tvu) - \bD( \tvu)}_{L^2(\tor;\R^{d\times d})}\aleq h.\]
Further, recalling \eqref{S6} and thanks to the above estimate we obtain
\begin{align*}
&\norm{\Gradh \vuh - \Grad \tvu}_{L^2(\tor)} \aleq \norm{\Gradh \Pim \tvu - \Grad \tvu}_{L^2(\tor)} + \norm{\Gradh (\vuh - \Pim \tvu)}_{L^2(\tor)}
\br
&\aleq h + \norm{\bD_h(\vuh-\Pim \tvu)}_{L^2(\tor)} \aleq h + \norm{\Dhuh - \bD(\tvu)}_{L^2(\tor)} + \norm{\bD_h(\Pim \tvu) - \bD( \tvu)}_{L^2(\tor)}
\br
&\aleq h + \norm{\Dhuh - \bD(\tvu)}_{L^2(\tor)}
\end{align*}
and finish the proof.
\end{proof}

Next, we estimate the numerical approximations $(\vrh, \vuh, \vth)$ on the outer domain $\Osh$, on the strip area $\Och$, and finally on the whole domain $\tor$.
\begin{Lemma}
Let $(\vrh, \vuh, \vth)$ be a numerical solution obtained by the FV scheme \eqref{VFV} with $(\TS,h,\penl) \in (0,1)^3$ and $ \alpha >-1$. Let $\delta>0$ be an arbitrary constant.
Then for any $\tau \in [0,T]$ the following hold
\begin{subequations}\label{lm}
\begin{align} \label{lm_u0}
\int_0^\tau \int_{\Osh \setminus \Os} {\abs{\vuh}} \dxdt \leq
\int_0^\tau \int_{\Osh \setminus \OOh} {\abs{\vuh}} \dxdt
\aleq \ & \delta h \frac{\norm{\vuh}_{L^2((0,\tau)\times\Osh)}^2}{\penl} + \frac{\penl}{\delta},
\\ \label{lm_u1}
\intTauOs{\abs{\vuh}} \aleq \
\intTauOsh{\abs{\vuh}} \aleq \ & \delta \frac{\norm{\vuh}_{L^2((0,\tau)\times\Osh)}^2}{\penl} + \frac{\penl}{\delta},
\\ \label{lm_vt0}
\int_0^\tau \int_{\Osh \setminus \OOh} {\abs{\vth-\vthB}} \dxdt
\aleq \ & \delta h \frac{\norm{\vth-\vthB}_{L^2((0,\tau)\times\Osh)}^2}{\penl} + \frac{\penl}{\delta} ,
\\ \label{lm_vt1}
\intTauOsh{\abs{\vth-\vthB}} \aleq \ & \delta \frac{\norm{\vth-\vthB}_{L^2((0,\tau)\times\Osh)}^2}{\penl} + \frac{\penl}{\delta}.
\end{align}
In addition, let assumption \eqref{HP} hold, then
\begin{align}\label{lm_0}
& \int_0^\tau \int_{\Osh \setminus \OOh} {\abs{\vuh}} \dxdt \aleq h^{1/2} \penl^{1/2},
\quad \int_0^\tau \int_{\Osh \setminus \OOh} {\abs{\vth-\vthB}} \dxdt
\aleq \ \min(h^{1/2} \penl^{1/2}, h),\\ \label{lm_1}
& \intTauOsh{\abs{\vuh}} \aleq \penl^{1/2},
\quad \intTauOsh{\abs{\vth-\vthB}} \aleq \penl^{1/2}.
\end{align}
\end{subequations}
\end{Lemma}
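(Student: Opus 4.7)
The four estimates \eqref{lm_u0}--\eqref{lm_vt1} are pure Cauchy--Schwarz/Young computations using only the geometric measure bounds of Remark~\ref{rmk}, while \eqref{lm_0}--\eqref{lm_1} then specialize them by plugging in the ballistic a priori bound \eqref{ap4}. The plan is therefore: first derive the two $L^2$-to-$L^1$ interpolation inequalities on $\Osh\setminus\OOh$ and on $\Osh$, then post-compose with \eqref{ap4}.

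For \eqref{lm_u0} I first observe that by Cauchy--Schwarz in space-time,
\begin{equation*}
\int_0^\tau \int_{\Osh\setminus\OOh} |\vuh|\,\dxdt
\leq \bigl(\tau\,|\Osh\setminus\OOh|\bigr)^{1/2}\,\|\vuh\|_{L^2((0,\tau)\times(\Osh\setminus\OOh))}
\aleq h^{1/2}\,\|\vuh\|_{L^2((0,\tau)\times\Osh)},
\end{equation*}
where I used the geometric bound $|\Osh\setminus\OOh|\aleq h$ from \eqref{EXTD} together with the containment $\Osh\setminus\Os\subset\Osh\setminus\OOh$ of \eqref{EXTE0}. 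Writing $h^{1/2}\|\vuh\|_{L^2}=\bigl(h^{1/2}\|\vuh\|_{L^2}/\penl^{1/2}\bigr)\cdot\penl^{1/2}$ and applying Young's inequality $ab\leq \delta a^2+b^2/(4\delta)$ yields \eqref{lm_u0}. For \eqref{lm_u1}, I use instead $|\Osh|\aleq 1$ and the same Cauchy--Schwarz/Young argument, this time factoring $\|\vuh\|_{L^2(\Osh)}=\penl^{1/2}\cdot(\|\vuh\|_{L^2(\Osh)}/\penl^{1/2})$.

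The temperature estimates \eqref{lm_vt0} and \eqref{lm_vt1} are structurally identical, replacing $\vuh$ with $\vth-\vthB$ throughout; nothing in the argument uses positivity so the same two-step Cauchy--Schwarz/Young derivation works verbatim.

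Finally, to obtain \eqref{lm_0}--\eqref{lm_1} under the additional assumption \eqref{HP}, I invoke the ballistic energy a priori bound \eqref{ap4}, which provides
\begin{equation*}
\|\vuh\|_{L^2((0,\tau)\times\Osh)}^2 + \|\vth-\vthB\|_{L^2((0,\tau)\times\Osh)}^2 \aleq \penl.
\end{equation*}
Substituting this bound directly into the Cauchy--Schwarz estimates obtained above (without invoking Young, since we no longer need to split off a $\penl$-independent term) produces $h^{1/2}\penl^{1/2}$ on $\Osh\setminus\OOh$ and $\penl^{1/2}$ on $\Osh$. The only extra point is the $\min(h^{1/2}\penl^{1/2},h)$ in \eqref{lm_vt0}: the second bound comes from the fact that \eqref{HP} together with $\inf \vtB>0$ and $\vtB\in W^{1,\infty}$ gives $|\vth-\vthB|\leq \Ov\vt+\|\vtB\|_{L^\infty}\aleq 1$, hence $\int_0^\tau\int_{\Osh\setminus\OOh}|\vth-\vthB|\dxdt\aleq |\Osh\setminus\OOh|\aleq h$. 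No real obstacle is expected; the only care needed is to keep track of the $\delta$-dependence in Young's inequality so that the statement reads with the arbitrary parameter $\delta>0$ as written.
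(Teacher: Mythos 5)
Your proposal is correct and follows essentially the same route as the paper: the geometric containments and the measure bound $|\Osh\setminus\OOh|\leq|\Och|\aleq h$ from \eqref{EXTD}, a Young-type splitting with the weight $\delta/\penl$ (the paper applies Young pointwise under the integral, you apply Cauchy--Schwarz first and then Young on the norms, which is the same computation), and finally the a priori bound \eqref{ap4} to deduce \eqref{lm_0}--\eqref{lm_1}. Your explicit justification of the $h$ term in the $\min$ via $|\vth-\vthB|\aleq 1$ under \eqref{HP} matches what the paper leaves implicit.
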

\begin{proof}
Firstly, we get the first estimate of \eqref{lm_u0} and \eqref{lm_u1} respectively
from the fact that $\Osh \setminus \Os \subset \Osh \setminus \OOh$ and $\Os \subset \Osh$ stated in \eqref{EXTE0}.

For the second estimate of \eqref{lm_u0} and \eqref{lm_u1} we apply Young's inequality, i.e.
 \begin{align*}
 \int_0^\tau \int_{\Osh \setminus \OOh} {\abs{\vuh}} \dxdt
&\leq \int_0^\tau \int_{\Osh \setminus \OOh} {\left(\frac{\delta h}{2\penl} \abs{\vuh}^2 + \frac{\penl}{2\delta h}\right) } \dxdt
\\&
\leq \frac{\delta h}{2\penl} \intTauOsh {\abs{\vuh}^2 }
+
\intTauOch { \frac{\penl}{2\delta h} }
 \aleq \delta h \frac{\norm{\vuh}_{L^2((0,\tau)\times\Osh)}^2}{\penl} + \frac{\penl}{\delta},
\end{align*}
\begin{align*}
\intTauOsh{ \abs{ \vuh} }
\leq \intTauOsh{\left(\frac{\delta }{2\penl} \abs{\vuh}^2 + \frac{\penl}{2\delta }\right) }
\aleq \delta \frac{\norm{\vuh}_{L^2((0,\tau)\times\Osh)}^2}{\penl} + \frac{\penl}{\delta} .
\end{align*}
The same process applies to $\vth-\vthB$ that yields \eqref{lm_vt0} and \eqref{lm_vt1}.

Further, letting $\delta = (\penl/h)^{1/2}$ in \eqref{lm_u0} and \eqref{lm_vt0}, we directly obtain \eqref{lm_0} from assumption \eqref{HP} and the uniform bounds \eqref{ap2}.
Setting $\delta = \penl^{1/2}$ in \eqref{lm_u1} and \eqref{lm_vt1} leads to \eqref{lm_1}, which completes the proof.
\end{proof}

\begin{Lemma}
Let $(\vrh, \vuh, \vth)$ be a numerical solution obtained by the FV scheme \eqref{VFV} with $(\TS,h,\penl) \in (0,1)^3$ and $ \alpha >-1$. Let $\delta>0$ be an arbitrary constant.
Then the following holds
\begin{subequations}\label{lm_ms}
\begin{align}\label{lm_u1_c}
\intOch{ \abs{ \vuh } } & \aleq h+ \RE{\vrh,\vuh,\vth}{\tvr,\tvu,\tvt},
\\ \label{lm_u2_c}
 \intOch{ \abs{\vuh}^2 } & \aleq h^3+ \RE{\vrh,\vuh,\vth}{\tvr,\tvu,\tvt},
\\ \label{lm_gradu_c}
 \intOch{\abs{\Dhuh}} & \aleq \frac{h}{\delta} + \delta \intOch{|\Dhuh - \bD(\tvu)|^2},
\\ \label{lm_gradvt_c}
\intOch{\abs{ \Gradd \vth } }
& \aleq  \frac{h}{\delta} + \delta \intOch{ |\Gradh \vth - \Grad \tvt|^2 }.
\end{align}
Further, we have
\begin{equation}\label{lm_gradu_c-1}
\intOch{\abs{ \Gradh \vuh} } \aleq  \frac{h}{\delta} + \delta \intOch{ |\Gradh \vuh - \Grad \tvu|^2 } \aleq  \frac{h}{\delta} + \delta \norm{\Dhuh - \bD(\tvu)}_{L^2(\tor)}^2.
\end{equation}
\end{subequations}
\end{Lemma}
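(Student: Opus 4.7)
The plan is to prove all five estimates via a single common decomposition: for every numerical quantity $f_h$ appearing on the left-hand side, I would split $f_h = (f_h - f) + f$, where $f$ is the corresponding component of the extended strong solution $(\tvr,\tvu,\tvt)$, and then exploit the smallness of the strip, $|\Och|\aleq h$ from Remark \ref{rmk}, together with Cauchy--Schwarz and Young's inequality. The main inputs are: the $L^\infty$--regularity of $\tvu$, $\bD(\tvu)$, $\Grad\tvt$ on $\tor$ (plus the sharper $|\tvu|\aleq h$ on $\Och$ from \eqref{EXTE1}), the identification $\RE{\vrh,\vuh,\vth}{\tvr,\tvu,\tvt}\approx \|(\vrh,\vuh,\vth)-(\tvr,\tvu,\tvt)\|_{L^2(\tor)}^2$ from \eqref{EN}, and the discrete-gradient control \eqref{EXTE5}.

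For \eqref{lm_u1_c} I would write $\intOch|\vuh|\leq \intOch|\vuh-\tvu|+\intOch|\tvu|$; the second term is $\aleq\|\tvu\|_{L^\infty(\Och)}|\Och|\aleq h\cdot h=h^2$ by \eqref{EXTE1}, while Cauchy--Schwarz gives the first one $\aleq|\Och|^{1/2}\|\vuh-\tvu\|_{L^2(\Och)}\aleq h^{1/2}\RE{\vrh,\vuh,\vth}{\tvr,\tvu,\tvt}^{1/2}$, and Young's inequality absorbs the $h^{1/2}\cdot\sqrt{\RE{}{}}$ into $h+\RE{}{}$. For \eqref{lm_u2_c} I would use $|\vuh|^2\leq 2|\vuh-\tvu|^2+2|\tvu|^2$: the second contribution is $\aleq h^2|\Och|\aleq h^3$ (again using $|\tvu|\aleq h$ on $\Och$), and the first is bounded globally by $\|\vuh-\tvu\|_{L^2(\tor)}^2\aleq \RE{}{}$.

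For \eqref{lm_gradu_c} and \eqref{lm_gradvt_c} the identical splitting gives ``target'' terms $\intOch|\bD(\tvu)|+\intOch|\Grad\tvt|\aleq |\Och|\aleq h$, while the ``difference'' terms are estimated by $|\Och|^{1/2}\|\cdot\|_{L^2(\Och)}\aleq h^{1/2}\|\cdot\|_{L^2(\Och)}$, to which Young's inequality $ab\leq\frac{a^2}{2\delta}+\frac{\delta b^2}{2}$ applies, producing the claimed $\frac{h}{\delta}+\delta\intOch|\cdot|^2$. Estimate \eqref{lm_gradu_c-1} proceeds the same way for $\Gradh\vuh - \Grad\tvu$; the concluding inequality then follows by invoking \eqref{EXTE5}, which yields $\|\Gradh\vuh-\Grad\tvu\|_{L^2(\tor)}^2\aleq h^2+\|\Dhuh-\bD(\tvu)\|_{L^2(\tor)}^2$, and the $h^2$ is harmless since we only need to match the $\frac{h}{\delta}$ term already on the right-hand side.

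The argument is essentially routine; the only care needed is to respect the different regularity of $\tvu$ on the two sides of $\partial\Of$ (where the relevant bounds hold only in $L^\infty$, not in $W^{1,\infty}$ across the interface) and to consistently treat the discrete gradients $\Gradd$ versus $\Gradh$ when they both appear. The main delicate point is obtaining the sharper $h^3$ bound in \eqref{lm_u2_c} rather than the cheap $h$, which requires using $|\tvu|\aleq h$ on $\Och$ (a consequence of $\tvu|_{\Os}=0$ together with Lipschitz regularity of $\tvu$ across the strip of width $\aleq h$).
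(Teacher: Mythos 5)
Your proposal is correct and follows essentially the same route as the paper: the triangle-inequality splitting $f_h=(f_h-f)+f$, the bounds $|\Och|\aleq h$ and $|\tvu|\aleq h$ on $\Och$ from \eqref{EXTD} and \eqref{EXTE1}, Young's inequality, the equivalence \eqref{EN}, and \eqref{EXTE5} for the final estimate. The only cosmetic difference is that you pass through Cauchy--Schwarz before applying Young, whereas the paper applies Young pointwise; both yield the identical bounds.
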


\begin{proof}
By triangular inequality, Young's inequality, \eqref{EXTD}, \eqref{EN}, and \eqref{EXTE1} we obtain \eqref{lm_u1_c} and \eqref{lm_u2_c}:
\begin{align*}
\intOch{ \abs{ \vuh } }
&\leq \intOch{ \abs{ \vuh - \tvu} }
+ \intOch{ \abs{ \tvu} } \aleq \intOchB{ \abs{ \vuh - \tvu}^2 +1} + \intOch{ \abs{ \tvu} }
\\&
\aleq \intOch{ \abs{ \vuh - \tvu}^2} + (1+h) \abs{ \Och}
\aleq h+\RE{\vrh,\vuh,\vth}{\tvr,\tvu,\tvt},
\br
 \intOch{ \abs{ \vuh }^2 }
& \aleq \intOch{ \abs{ \vuh - \tvu}^2 }
+ \intOch{ \abs{ \tvu}^2 } \aleq h^{ 3}+ \RE{\vrh,\vuh,\vth}{\tvr,\tvu,\tvt}
.
\end{align*}
Analogously, applying Young's inequality to $\Dhuh$, $ |\Gradd \vth|$ and $\Gradh \vuh$, we obtain \eqref{lm_gradu_c} -- \eqref{lm_gradu_c-1}, respectively. For the last inequality of \eqref{lm_gradu_c-1} we have used \eqref{EXTE5}.
\end{proof}

\section{Proof of the consistency formulation}\label{app-cf}
In this section we prove the consistency formulation and estimate the consistency errors, cf. Lemmas \ref{lem_C1} and \ref{lem_C4}.
To begin, we recall the consistency formulation of our previous paper \cite[Lemma 4.5]{BLMSY}.
\begin{Lemma}[{\cite[Lemma 4.5]{BLMSY}}]\label{lem_C1-tor}
Let $(\vrh, \vuh, \vth)$ be a numerical solution obtained by the FV scheme \eqref{VFV} with $(\TS,h,\penl) \in (0,1)^3$, $-1 < \alpha <1$.
Let assumption \eqref{HP} hold.

Then for any $\tau \in [t^n,t^{n+1})$, $n=0,1,\cdots,N_T$  it holds
\begin{subequations}\label{eq_C1-tor}
\begin{equation} \label{cP1-tor}
\left[ \intTd{ \vrh\phi } \right]_{t=0}^{t=\tau}=
 \intTauTdB{ \vrh \partial_t \phi + \vrh \vuh \cdot \Grad \phi } + e_\vr(\phi)
\end{equation}
for any $\phi \in L^\infty(0,T;W^{2,\infty}(\tor))$, $\partial_t\phi \in L^2((0,T) \times \tor)$, $\partial^2_t\phi \in L^2((0,T) \times \tor)$;
\begin{align} \label{cP2-tor}
\left[ \intTd{ \vrh \vuh \cdot \bfphi } \right]_{t=0}^{t=\tau} = &
\intTauTdB{ \vrh \vuh \cdot \partial_t \bfphi + \vrh \vuh \otimes \vuh : \Grad \bfphi } - \frac{1}{\penl} \int_0^{t^{n+1}}\intOsh{ \vuh \cdot \bfphi }
\br
&- \intTauTd{ ( \bS_h -p_h \I) : \Grad \bfphi }
+ \widetilde{e_{\vm}}(\bfphi)
 \end{align}
 for any $\bfphi \in L^\infty(0,T;W^{2,\infty}(\tor;\mathbb R^d))$, $\partial_t\bfphi \in L^2((0,T) \times \tor; \mathbb R^d)$, $\partial^2_t \bfphi \in L^2((0,T) \times \tor; \mathbb R^d)$;
 \begin{align}\label{cP3-tor}
\left[ \intTd{ \vrh s_h \phi } \right]_{t=0}^{t=\tau} & \geq
 \int_0^\tau\intTd{ \vrh s_h (\pd_t\phi + \vuh \cdot \Grad \phi ) } \dt - \int_0^\tau\intTd{ \frac{\kappa}{\vth} \Gradd \vth \cdot \Grad \phi } \dt
\br
&+ \int_0^\tau\intTd{ \frac{ \kappa \phi}{ \vthout \vth} \abs{\Gradd \vth }^2 } \dt
+ \int_0^\tau\intTd{\difuh \frac{ \phi}{\vth} } \dt
\br
& - \frac{1}{\penl} \intn \intOsh{(\vth -\vthB)\frac{\phi}{\vth}} \dt + \widetilde{e_{s}}(\phi)
\end{align}
for any $\phi \in L^\infty(0,T;W^{2,\infty}(\tor))$, $\partial_t\phi \in L^2((0,T) \times \tor)$, $\partial^2_t\phi \in L^2((0,T) \times \tor)$, $\phi \geq 0$.

The consistency errors satisfying
\begin{align}\label{ec1-tor}
 \Bigabs{e_\vr(\phi) } & \leq C_\varrho\left( \TS +h+h^{1-\alpha} + h^{\alpha+1}\right),
\\
 \Bigabs{ \widetilde{e_{\vm}}(\bfphi) }& \leq C_{\vm}\left(\TS+h+h^{(1-\alpha)/2}+h^{\alpha+1} \right), \label{ec2-tor}
 \\
\Bigabs{ \widetilde{e_{s}}(\phi) } &\leq C_s\left( \TS +h+h^{1-\alpha} + h^{(1+\alpha)/2} \right). \label{ec3-tor}
\end{align}
\end{subequations}
Here the constants $C_\vr,$ $C_{\vm},$ $C_{s}$ are independent of parameters $\TS,h,\penl$.
\end{Lemma}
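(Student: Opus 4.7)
The plan is to specialize the algebraic identity \eqref{BE0} of Lemma~\ref{THBE} to $\phi_h=\Pim\hvt$, integrate over $(0,\tau)$, discard the non-negative dissipations, and replace each mesh-dependent quantity by its continuous counterpart. Under \eqref{HP} and $\inf\hvt>0$, the projection $\Pim\hvt$ is strictly positive, so $D_{\rm E},D_{s,1}(\Pim\hvt),D_{s,2}(\Pim\hvt)\ge 0$ are all dropped, turning \eqref{BE0} into an inequality in the correct direction. The projection estimates \eqref{proj} together with $\hvt\in W^{2,\infty}((0,T)\times\tor)$ give
\[
\norm{\Pim\hvt-\hvt}_{L^\infty}+\norm{\Gradh\Pim\hvt-\Grad\hvt}_{L^\infty}+\norm{\Gradd\Pim\hvt-\Grad\hvt}_{L^\infty}\aleq h,\qquad |D_t\Pim\hvt-\partial_t\hvt|\aleq \TS+h,
\]
and $|\avs{1/\vth}-1/\vth|\aleq |\jump{\vth}|/\Uvt^{2}$. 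Combined with the a priori estimates \eqref{ap} (in particular $\norm{(\Dhuh,\Gradd\vth)}_{L^2((0,T)\times\tor)}\aleq 1$ and $\norm{\vrh s_h}_{L^\infty}\aleq 1$ under \eqref{HP}), these deliver the continuous right-hand-side structure of \eqref{cP4} together with the contributions of order $h+\TS$ to $e_B$.

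For the compatibility remainder $R_{B,2}(\Pim\hvt)$, the factor $|\jump{\Pim\hvt}|\aleq h$ combined with Cauchy--Schwarz against \eqref{ap3} (and $|\jump{\vrh s_h}|\aleq|\jump{\vrh}|+|\jump{p_h}|$ under \eqref{HP}) bounds the dominant piece by $h\cdot h^{-\alpha/2}\cdot h^{-\alpha/2}=h^{1-\alpha}$, which is absorbed in $e_B$. Next, the splitting
\[
R_{B,1}(\Pim\hvt)=\TS\intTd{D_t(\vrh s_h)\,D_t\Pim\hvt}+\frac{1}{\penl}\intOsh{\frac{(\vth-\vthB)(\Pim\hvt-\vthB)}{\vth}}
\]
contributes $\TS^{1/2}$ on its first piece via Cauchy--Schwarz together with \eqref{ap2}; the second piece is the crux.

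The main obstacle is precisely this boundary mismatch in the penalty term. Although $\hvt|_{\Os}=\vtB$, the cell-wise projection $\Pim\hvt$ coincides with $\vthB=\Pim\vtB$ only on cells fully contained in $\Os$; on cells of $\Osh\setminus\Os\subset\Och$ that straddle $\partial\Of$ we only have the pointwise bound $|\Pim\hvt-\vthB|\aleq h\,\norm{\Grad(\hvt-\vtB)}_{L^\infty}$. Combined with $|\Osh\setminus\Os|\aleq h$ from \eqref{EXTD}, this yields
\[
\norm{\Pim\hvt-\vthB}_{L^2((0,\tau)\times\Osh)}\aleq h\,|\Osh\setminus\Os|^{1/2}\aleq h^{3/2},
\]
and by Cauchy--Schwarz with the a priori bound $\norm{\vth-\vthB}_{L^2((0,\tau)\times\Osh)}\aleq\penl^{1/2}$ from \eqref{ap4}, this produces exactly the $h^{3/2}\penl^{-1/2}$ contribution to $e_B$.

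Finally, for the entropy remainder $R_s(\Pim\hvt)$ a direct Cauchy--Schwarz only gives the suboptimal $h^{(1+\alpha)/2}$, too weak to feed the sharp rates of Theorems~\ref{thm_EEB}--\ref{thm_EEB_1}. Instead, I would not discard $D_{s,3}(\Pim\hvt)$ wholesale but rather use it to absorb $R_s$ via Young's inequality: since $\nabla^{2}_{(\vr,p)}(-\vr s)$ is positive definite and $\avs{\Pim\hvt}$ is uniformly bounded below under \eqref{HP}, together with $|\jump{\Pim\hvt}|\aleq h$ and $\int_\faces 1\ds\aleq h^{-1}$,
\[
|R_s(\Pim\hvt)|\le \tfrac12 D_{s,3}(\Pim\hvt)+ C\,h^\alpha\intfaces{|\jump{\Pim\hvt}|^2}\le \tfrac12 D_{s,3}(\Pim\hvt)+C\,h^{1+\alpha}.
\]
Moving $R_s$ to the right and using the retained $\tfrac12 D_{s,3}(\Pim\hvt)\ge 0$ to offset produces precisely the explicit residual $C_B'\,h^{1+\alpha}$ appearing in \eqref{cP4}; collecting all error contributions then yields \eqref{ec4} and completes the proof.
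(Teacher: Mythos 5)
Your proposal does not prove the statement in question. The lemma to be established is Lemma \ref{lem_C1-tor}: the consistency formulations of the \emph{continuity}, \emph{momentum} and \emph{entropy} equations on $\tor$, i.e. identities \eqref{cP1-tor}--\eqref{cP3-tor} together with the error bounds \eqref{ec1-tor}--\eqref{ec3-tor}. What you have written is instead an argument for Lemma \ref{lem_C4} (consistency of the \emph{ballistic energy} inequality): you start from the ballistic energy balance \eqref{BE0} with $\phi_h=\Pim\hvt$, discuss $D_{\rm E}$, $D_{s,i}$, $R_{B,1}$, $R_{B,2}$ and $R_s$, and you explicitly conclude with \eqref{cP4} and \eqref{ec4}. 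None of the three identities you were asked to prove appears anywhere in your argument, and none of the quantities that drive their error rates is estimated: there is no treatment of the upwind flux errors $E_1,\dots,E_4$ from \eqref{E_nf} (which produce the $h^{1-\alpha}$ and $h^{(1-\alpha)/2}$ rates in \eqref{ec1-tor}--\eqref{ec2-tor}), no treatment of the viscous and pressure consistency terms $E_{\vm,\bS}$, $E_{\vm,p}$ obtained by testing \eqref{VFV_M} with $\Pim\bfphi$, and no derivation of the entropy inequality \eqref{cP3-tor} from the entropy balance \eqref{eq_entropy_stability}, where one must drop the non-negative dissipation $D_s(\phi_h)$ for $\phi_h\ge 0$ and estimate $E_{s,F}$, $E_{s,\Grad\vt}$ and $E_{s,res}$ to obtain the $h^{(1+\alpha)/2}$ rate in \eqref{ec3-tor}.

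Note also that the paper does not reprove Lemma \ref{lem_C1-tor}; it is recalled verbatim from \cite[Lemma 4.5]{BLMSY}, so the argument you were meant to reconstruct is the one in that reference: test \eqref{VFV_D}, \eqref{VFV_M} and \eqref{eq_entropy_stability} with the projections $\Pim\phi$, $\Pim\bfphi$ of the smooth test functions, decompose the discrepancies between discrete and continuous time-derivative, transport and diffusion terms into the pieces $E_t$, $E_F$, $E_{\vm,\bS}$, $E_{\vm,p}$, $E_{s,F}$, $E_{s,\Grad\vt}$, $E_{s,res}$ listed in the Remark following the lemma, and estimate each using the projection errors \eqref{proj}, the boundedness hypothesis \eqref{HP} and the a priori bounds \eqref{ap}. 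Some of the tools you invoke (projection estimates, Cauchy--Schwarz against \eqref{ap3}, absorption of a residual into a convexity dissipation term) do reappear in that proof, but as written your proposal establishes a different lemma and leaves the stated one unproved.
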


\begin{Remark}
Setting $\phi_h :=\Pim \phi$ and $\bfphi_h := \Pim \bfphi$, the above consistency errors can be defined in the following way:
\begin{align*}
&e_{\vr}(\phi) =E_t(\vrh,\phi) + E_F(\vrh,\phi) ,
\\& 
\widetilde{e_{\vm}}(\bfphi) =E_t(\vm_h,\bfphi) + E_F(\vm_h,\bfphi) +E_{\vm,\bS}(\bfphi) +E_{\vm,p}(\bfphi),
\\& 
\widetilde{e_{s}}(\phi) =E_t(\vrh s_h,\phi) + E_{s,F}(\vrh s_h,\phi) + E_{s,\Grad\vt}(\phi)+ E_{s,res}(\phi),
\end{align*}
where 
\begin{align*}
E_t(r_h,\phi) &= \left[ \intTd{r_h \phi}\right]_{t=0}^\tau - \intTauTd{ r_h \pdt \phi} - \intn \intTd{D_t r_h \phi_h }\dt ,
\\ 
E_F(r_h,\phi) &= - \intTauTd{r_h \vuh \cdot \Grad \phi} + \intn \intfaces{ \Fup (r_h,\vuh) \jump{ \phi_h}} \dt
\br
& = -\sum_{i=1}^4 \int_0^{t^{n+1}} E_i(r_h,\phi)\dt + \int_{\tau}^{t^{n+1}} \intTd{r_h \vuh \cdot \Grad \phi} \dt,
\\ 
E_{\vm,\bS}(\bfphi) & = \intTauTd{ \bS_h : \Grad \bfphi} - \intn \intTd{ \bS_h : \Gradh \bfphi_h } \dt ,
\\ 
E_{\vm,p}(\bfphi) & = - \intTauTd{ p_h \Div \bfphi } + \intn \intTd{ p_h \Divh \bfphi_h} \dt ,
\\ 
E_{s,F}(r_h,\phi) &= - \intTauTd{r_h \vuh \cdot \Grad \phi} + \intn \intfaces{ \Up [r_h, \bm{u}_h] \jump{ \phi_h}} \dt
\br
& = -\sum_{i=1}^3 \int_0^{t^{n+1}} E_i(r_h,\phi)\dt + \int_{\tau}^{t^{n+1}} \intTd{r_h \vuh \cdot \Grad \phi} \dt,
\\ 
E_{s,\Grad\vt}(\phi) &= -\int_0^{t^{n+1}} \intfaces{\jump{ \phi_h} \avs{\frac{1}{\vth}}} + \int_0^{\tau} \intfaces{ \avs{\phi_h} \frac{\jump{\vth }}{\vthout \vth} } \dt
\br
&\quad
 - \left( \intTauTd{ \frac{ \kappa \phi}{ \vthout \vth} \abs{\Gradd \vth }^2 }
- \intTauTd{ \frac{\kappa}{\vth} \Gradd \vth \cdot \Grad \phi } \right),
\\
E_{s,res}(\phi) & = \intn \intTd{R_{s}(\phi_h) }
\end{align*}
with $R_{s}$ given in \eqref{entropy_dissipation} and $E_i(r_h,\phi)$ defined by
\begin{equation}\label{E_nf}
\begin{aligned}
& E_1(r_h, \phi) = \frac12 \intfaces{ |\avs{\vuh} \cdot \vc{n}| \jump{r_h } \jump{ \phi_h} } , \quad
E_2(r_h, \phi) = \frac14 \intfaces{ \jump{\vuh} \cdot \vc{n} \jump{r_h } \jump{ \phi_h} } ,
\\
& E_3(r_h, \phi)
 = - h^{\alpha+1} \intTd{ r_h \Delta_h \phi_h } ,
\quad E_4(r_h, \phi) = \intTd{ r_h \vuh \cdot \Big(\Grad \phi - \Gradh \phi_h \Big) } .
\end{aligned}
\end{equation}
\end{Remark}

\subsection{Proof of Lemma \ref{lem_C1}}\label{app-cf-m-en}
\begin{proof}[Proof of Lemma \ref{lem_C1}]
Choosing the test function in the consistency formulations \eqref{cP2-tor}--\eqref{cP3-tor} from the class
\begin{align}\label{tse0}
\bfphi \in C^2_c([0,T] \times \Of; \R^d), \quad \phi \in C^2_c([0,T] \times \Of)
\end{align}
we directly obtain the consistency formulation of density \eqref{cP1_D}. Moreover, the
consistency errors of the formulations \eqref{cP2_D}--\eqref{cP3_D} read
\begin{align*}
&e_{\vm}(\bfphi) = \widetilde{e_{\vm}}(\bfphi) + E_{\vm,\penl}(\bfphi), \quad E_{\vm,\penl}(\bfphi) = - \frac{1}{\penl} \intn \int_{\Osh \setminus \Os}{ \vuh \cdot \bfphi } \dxdt,
\\
&e_{s}(\phi) = \widetilde{e_{s}}(\phi) + E_{s,\penl}(\phi),
 \quad E_{s,\penl}(\phi) = - \frac{1}{\penl} \intn \int_{\Osh \setminus \Os} {(\vth -\vthB)\frac{\phi}{\vth}} \dxdt.
\end{align*}
Due to the choice of the test functions stated in \eqref{tse0}, we know that $\phi= \mathcal{O}(h)$ and $|\bfphi|= \mathcal{O}(h)$ on ${\Osh \setminus \Os} $. Thanks to \eqref{lm_0} we finish the proof by estimating
\begin{equation*}
\abs{E_{\vm,\penl}(\bfphi)} \aleq h^{3/2}\penl^{-1/2}, \quad \abs{E_{s,\penl}(\phi)} \aleq h^{3/2} \penl^{-1/2}.
\end{equation*}
\end{proof}

\subsection{Proof of Lemma \ref{lem_C4}}
\begin{proof}[Proof of Lemma \ref{lem_C4}]
Let $\tau \in [t^n, t^{n+1})$.
Realizing that $\left[ \intTd{r_h}\right]_0^{\tau} = \intn \intTd{D_t r_h} \dt$, cf. \cite[(2.17)]{FLS_IEE},
we obtain from the ballistic energy balance \eqref{BE0} with $\phi_h=\hvth:=\Pim \hvt$ that
\begin{align*}
 &\left[ \intTd{ \left(\frac{1}{2} \vrh |\vuh |^2 + c_v \vrh \vth - \vrh s_h \hvth \right) (t, \cdot)} \right]_{t=0}^{t=\tau}
 +\frac{1}{\penl} \intnOshB{ |\vuh|^2 + \frac{(\vth-\vthB)^2 }{\vth} }
 \br
 & \quad + \intn\intTdB{ \frac{ \kappa\avs{\hvth}}{\vth \vthout } \; \abs{\Gradd \vth}^2 + \frac{\hvth}{\vth }\difuh} \dt
 { + \intn D_s(\hvth) \dt}
\br
& \leq
- \intn \intTdB{ \vrh s_h D_t \hvth + \vrh s_h \vuh \cdot \Gradh \hvth - \kappa \avs{ \frac1{\vth}} \; \Gradd \vth \cdot \Gradd \hvth} \dt
\\& \quad + \intn R_B(\hvth)\, \dt - \intn R_s(\hvth)\, \dt.
\end{align*}
Recalling the estimates \eqref{ds} and \eqref{rs} we obtain 
\begin{align}\label{ee-rs}
\intn \abs{R_s(\hvth)} \dt \leq C_B' h^{\alpha+1} + \frac12 \intn D_s(\hvth) \dt,
\end{align}
where $c'_B$ depends on $\Un{\vr}, \Ov{\vr}, \Un{\vt}, \Ov{\vt}$ and $\norm{\hvt}_{L^\infty(0,T;W^{1,\infty}(\tor))}$.
In view of the following estimates
\begin{align*}
& \frac{1}{\penl} \int_{\tau}^{t^{n+1}}\intOshB{ |\vuh|^2 + \frac{(\vth-\vthB)^2 }{\vth} } \dt\geq 0,
\quad
\int_{\tau}^{t^{n+1}}\intTd{ \frac{ \kappa\avs{\hvth}}{\vth \vthout } \; \abs{\Gradd \vth}^2} \dt \geq 0,
\br
& \intn\intTd{ \frac{\hvth}{\vth } \difuh } \dt \geq \int_{0}^{\tau}\intTd{ \frac{\hvth}{\vth } \difuh} \dt = \intTauTd{ \frac{\hvt}{\vth }\difuh},
\\
& \intTd{ \left( \vrh s_h \hvth \right) (t, \cdot)} = \intTd{ \left( \vrh s_h \hvt \right) (t, \cdot)}
\hspace{1cm} \mbox{for any} ~ t \in[0,T],
\end{align*}
we have
\begin{align*}
 &\left[ \intTd{ \left(\frac{1}{2} \vrh |\vuh |^2 + c_v \vrh \vth - \vrh s_h \hvt \right) (t, \cdot)} \right]_{t=0}^{t=\tau}
 +\frac{1}{\penl} \intTauOshB{ |\vuh|^2 + \frac{(\vth-\vthB)^2 }{\vth} }
 \br
 & \quad + \intTauTdB{ \frac{ \kappa \hvt}{\vth \vthout } \; \abs{\Gradd \vth}^2 + \frac{\hvt}{\vth }\difuh}
\br &
\leq
- \intTauTdB{ \vrh s_h \partial_t \hvt + \vrh s_h \vuh \cdot \Grad \hvt - \frac{\kappa}{\vth} \; \Gradd \vth \cdot \Grad \hvt} + e_{B}(\hvt) + C'_ B h^{\alpha+1}
\end{align*}
with the consistency error
$e_{B}(\hvt) = E_{B, \Grad\vt} + E_{B, res} + \intn R_B(\hvth) \, \dt,$ 
and
\begin{align*}
&E_{B, \Grad\vt} = \kappa\intTauTd{ \frac{\hvt - \avs{\hvth} }{\vth \vthout } \; \abs{\Gradd \vth}^2 },
\\
E_{B, res} & = \intTauTdB{ \vrh s_h \partial_t \hvt + \vrh s_h \vuh \cdot \Grad \hvt - \kappa \frac1{\vth} \; \Gradd \vth \cdot \Grad \hvt}
\br
&- \intn \intTdB{ \vrh s_h D_t \hvth + \vrh s_h \vuh \cdot \Gradh \hvth - \kappa \avs{ \frac1{\vth}} \; \Gradd \vth \cdot \Gradd \hvth} \dt
\br
 & = \intTauTdB{ \vrh s_h ( \partial_t \hvt - D_t \hvth) + \vrh s_h \vuh \cdot (\Grad \hvt- \Gradh \hvth) }
\br
 &-\kappa \intTauTdB{ \avs{ \frac1{\vth} } \; \Gradd \vth \cdot (\Grad \hvt - \Gradd \hvth) + \left( \frac1{\vth} - \avs{\frac1{\vth}} \right) \Gradd \vth \cdot \Grad \hvt }
\br
 & - \int^{t^{n+1}}_{\tau} \intTdB{ \vrh s_h D_t \hvth + \vrh s_h \cdot \vuh \Gradh \hvth - \kappa \avs{ \frac1{\vth} } \; \Gradd \vth \cdot \Gradd \hvth} \dt .
\nonumber
\end{align*}

The rest is to estimate $e_B(\hvth)$. Firstly, let us look at the $\int_{0}^{\tau} \cdot \dt $-terms.
Due to $\hvt \in W^{2,\infty}((0,T) \times \tor)$ and assumption \eqref{HP} we have
\begin{align*}
& \Bigabs{\intTauTd{ \frac{ \kappa(\hvt - \avs{\hvth} )}{\vth \vthout } \; \abs{\Gradd \vth}^2 }} \aleq h \norm{\hvt}_{L^\infty(0,T; W^{1,\infty}(\tor))} \norm{\Gradd \vth}_{L^2((0,T)\times\tor)},
\\
& \Bigabs{\intTauTdB{ \vrh s_h ( \partial_t \hvt - D_t \hvth)+ \vrh s_h \vuh \cdot (\Grad \hvt- \Gradh \hvth) }} \aleq (\TS +h)\norm{\hvt}_{W^{2,\infty}((0,T) \times \tor)},
\\
& \Bigabs{\intTauTd{ \kappa \avs{ \frac1{\vth} } \; \Gradd \vth \cdot (\Grad \hvt - \Gradd \hvth)}} \aleq h \norm{\hvt}_{L^2(0,T;W^{2,2}(\tor))} \norm{\Gradd \vth}_{L^2((0,T)\times\tor)},
\\
& \Bigabs{\kappa \intTauTd{ \left( \frac1{\vth} - \avs{ \frac1{\vth} } \right) \Gradd \vth \cdot \Grad \hvt }} \aleq h \norm{\hvt}_{L^\infty(0,T; W^{1,\infty}(\tor))}\norm{\Gradd \vth}_{L^2((0,T)\times\tor)}^2.
\end{align*}
Secondly, we estimate $\int^{t^{n+1}}_{\tau} \cdot \dt $-terms as follows
\begin{align*}
& \Bigabs{\int^{t^{n+1}}_{\tau} \intTdB{ \vrh s_h D_t \hvth + \vrh s_h \vuh \cdot \Gradh \hvth} \dt } \aleq \TS \norm{\hvt}_{W^{1,\infty}((0,T) \times \tor)},
\\
& \Bigabs{\int^{t^{n+1}}_{\tau} \intTd{ \kappa \avs{ \frac1{\vth}} \; \Gradd \vth \cdot \Gradd \hvth} \dt } \aleq \TS^{1/2} \norm{\Gradd \vth}_{L^2((0,T)\times\tor)} \norm{\hvt}_{L^\infty(0,T; W^{1,\infty}(\tor))} .
\end{align*}
Finally, let us estimate the term $\intn R_B \, \dt $.
Recalling \cite[Appendix C]{BLMSY} we have
\begin{align*}
& \Bigabs{\intn R_{B,2}(\hvth) \dt} = \Bigabs{\intn E_1(\vrh s_h, \hvt) + E_2(\vrh s_h, \hvt)\dt} \aleq (h+h^{1-\alpha}) \norm{\hvt}_{L^\infty(0,T; W^{2,\infty}(\tor))}.
\end{align*}
Realizing that $\hvt|_{\Os} = \vtB$ and $\hvth|_{\OOh} = \vthB$ and thanks to \eqref{lm_0} we obtain
\begin{align*}
&\abs{(\hvth-\vthB)}(t,x)\aleq h \left( \norm{\hvt}_{L^\infty(0,T; W^{1,\infty}(\tor))} + \norm{\vtB}_{L^\infty(0,T; W^{1,\infty}(\tor))}\right) \quad \mbox{for any } \ (t,x)\in (0,T)\times \Och,
\br
&\Bigabs{\frac{1}{\penl} \intn\intOsh{ \frac{(\vth-\vthB) (\hvth-\vthB) }{\vth} } \dt} = \Bigabs{\frac{1}{\penl} \intn\int_{\Osh \setminus \OOh} { \frac{(\vth-\vthB) (\hvth-\vthB) }{\vth} } \dxdt }
\br & \aleq h^{3/2}\penl^{-1/2} .
\end{align*}
Applying \eqref{ap2} we obtain
\begin{align*}
\Bigabs{ \TS \intn\intTd{D_t (\vrh s_h) \cdot D_t \hvth} \dt} \aleq \TS \left( \norm{D_t \vrh }_{L^2((0,T)\times\tor)} +\norm{D_t \vth }_{L^2((0,T)\times\tor)}\right) \aleq (\TS)^{1/2}.
\end{align*}
Collecting above estimates we obtain \eqref{ec4} and finish the proof.
\end{proof}

\section{Proof of the error estimates}\label{app-ee}
 In this section we analyze the error estimates between the numerical approximations $(\vrh,\vuh,\vth)$ and the strong solution $(\tvr,\tvu,\tvt)$. 
In what follows we will firstly prove the error estimates for $\tau \in \{t^{k,-}\}_{k=0}^{N_T}$ and further for any $\tau\in[0,T]$.

\subsection{Relative energy inequality}
Taking suitable test functions in the consistency formulation
\begin{equation*}
\eqref{cP4}|_{\hvt=\tvt}
+ \eqref{cP1-tor}|_{\phi = \frac{\abs{\tvu}^2}{2} - \frac{\pd \Hc(\tvr,\tvt)}{\pd \vr} }
-\eqref{cP2-tor}|_{\phi = \tvu}
\end{equation*}
we obtain the following relative energy inequality
\begin{equation}\label{REI}
\begin{aligned}
&\left[ \RE{\vrh,\vuh,\vth}{\tvr,\tvu,\tvt} \right]_0^{\tau}
 +\frac{1}{\penl} \intTauOsh{ \left(|\vuh|^2 + \frac{(\vth-\vthB)^2 }{\vth} \right)}
\\& 
+\intTauTd{ \frac{\tvt}{ \vth} \left( 2\mu \left|\Dhuh - \bD(\tvu) \right|^2 + \lambda \left| \Divh \vuh - \Div \tvu \right|^2 \right) }
+\intTauTd{\frac{\kappa\tvt}{\vth \vthout }|\Gradd\vth-\Grad\tvt|^2}
\\& \leq
R_C +\sum_{i=1}^{9} R_i + R_{\penl}- R'_{\bS}- R'_{\vt} - R_{\bS} - R_{\vt} ,
\end{aligned}
\end{equation}
where 
\begin{align*}
&R_C = e_B(\tvt, \tau)+C_B' h^{\alpha +1} + e_{\vr}\left( \frac12\abs{\tvu}^2 - \frac{\pd \Hc(\tvr,\tvt)}{\pd \vr} , \tau \right) - \widetilde{e_{\vm}} (\tvu, \tau) ,
\\
& R_1=
 - \intTauTd{ \vrh (s_h - \ts) (\vuh-\tvu) \cdot \Grad \tvt } ,
 \quad
 R_2= - \intTauTd{ \vrh (\vuh - \tvu) \otimes (\vuh - \tvu): \Grad \tvu } ,
\\&
R_3 = \intTauTd{ \frac{\vrh( \tvu - \vuh )}{\tvr} \Big( \tvr (\pd_t \tvu + \tvu \cdot \Grad \tvu) + \Grad p(\tvr) - \Div\tbS \mathrm{1}_{\Of} \Big)},
\\&
R_4= \intTauTd{ \frac{ \vrh-\tvr}{\tvr} ( \tvu - \vuh )\cdot \Div \tbS \mathrm{1}_{\Of}} = \int_0^\tau \intOf{ \frac{ \vrh-\tvr}{\tvr} ( \tvu - \vuh )\cdot \Div \tbS } \dt,
\\&
R_5= \intTauTd{ ( \tp-p_h - \pd_\vr \tp (\tvr - \vrh) - \pd_\vt \tp (\tvt - \vth) )\Div \tvu } ,
\\&
R_6=
-\intTauTd{ \bigg( (\vrh-\tvr) (s_h- \ts) + \tvr \big(s_h- \ts - \pd_\vr \ts (\vrh -\tvr) - \pd_\vt \ts (\vth -\tvt) \big) \bigg)(\pd_t\tvt + \tvu \cdot \Grad \tvt ) } ,
\\&
R_7 = -\intTauTd{ (\vth -\tvt) \left( \frac{c_v \tvr}{ \tvt} (\partial_t \tvt + \tvu \cdot \Grad \tvt) + \tvr \Div \tvu - \frac{1}{\tvt} \left( \tbS : \Grad \tvu + \frac{\kappa \abs{\Grad \tvt}^2}{\tvt} + \mathrm{1}_{\Of} \tvt \Div\left( \frac{\kappa \Grad \tvt}{\tvt} \right)\right) \right)},
\\&
R_8 = \intTauTdB{ \mathrm{1}_{\Of} \tvu \cdot \Div \tbS + \tbS: \Grad \tvu } , \quad
R_9 = \intTauTdB{ \frac{\kappa \abs{\Grad \tvt}^2}{\tvt} +\mathrm{1}_{\Of} \tvt \Div\left( \frac{\kappa \Grad \tvt}{\tvt} \right)} ,
\\&
R_{\penl} = \frac{1}{\penl} \intTauOsh{ \vuh \cdot \tvu } ,
\quad R'_{\bS} =
 \intTauTdB{ \vuh \cdot \Div \tbS \;\mathrm{1}_{\Of} + \Gradh \vuh : \tbS }, 
\\&
 R'_{\vt} = \kappa \intTauTdB{\vth \mathrm{1}_{\Of}\Div\left(\frac{\Grad \tvt}{\tvt}\right) +\Gradd\vth \cdot \frac{\Grad \tvt }{\tvt} } ,
 \\&
R_{\bS} = \intTauTd{ \frac{\tvt}{ \vth} \left( 2\mu \left|\Dhuh - \frac{\vth}{ \tvt} \bD(\tvu) \right|^2 + \lambda \left| \Divh \vuh - \frac{\vth}{ \tvt}\Div \tvu \right|^2 \right) }
\\& \hspace{1cm}- \intTauTd{ \frac{\tvt}{ \vth} \left( 2\mu \left|\Dhuh - \bD(\tvu) \right|^2 + \lambda \left| \Divh \vuh - \Div \tvu \right|^2 \right) },
 \\&
 R_{\vt} = \kappa \intTauTdB{ \frac{\vth^2\vthout - \tvt^3}{\tvt^2\vth\vthout } \abs{\Grad \tvt}^2 - \Gradd\vth \cdot \frac{\Grad \tvt }{\tvt} + \frac{2\tvt -\vthout}{\vth \vthout} \Grad \tvt \cdot \Gradd \vth} .
\end{align*}

Further, applying the definition of $(\tvr,\tvu,\tvt)$, i.e.
\begin{align*}
& \pdt \tvr=0, \quad \pdt \tvt = 0 \quad \text{and} \quad \tvu = {\bf 0}\quad \mbox{ on } \Os, \quad
 \tvr (\pd_t \tvu + \tvu \cdot \Grad \tvu) + \Grad \tp = \Div \tbS \quad \mbox{ on } \Of,
 \br
 &\partial_t \tvt + \tvu \cdot \Grad \tvt + \frac{\tvt}{c_v} \Div \tvu = \frac{1}{c_v \tvr} \left( \tbS : \Grad \tvu + \frac{\kappa \abs{\Grad \tvt}^2}{\tvt} + \tvt \Div\left( \frac{\kappa \Grad \tvt}{\tvt} \right)\right) \quad \mbox{ on } \Of,
\end{align*}
we obtain
\begin{align*}
&R_3 = - \intTauOs{ \frac{\vrh\vuh}{\tvr} \cdot \Grad \tp} ,\quad R_7 = \kappa\intTauOs{(\vth -\tvt) \frac{ \abs{\Grad \tvt}^2}{\tvt^2} } , \quad R_8=0,
\\
& R_9 - R'_{\vt} = \kappa\intTauTdB{ (\Grad\tvt - \Gradd\vth ) \cdot \frac{\Grad\tvt}{\tvt} + (\tvt - \vth) \mathrm{1}_{\Of} \Div\left( \frac{ \Grad \tvt}{\tvt} \right)}.
\end{align*}
Together with $\Div \tvu = \mbox{tr}(\bD (\tvu)), \Divh \vuh = \mbox{tr}(\Dhuh)$, and \eqref{HP} we reorganize the relative energy inequality \eqref{REI} as
\begin{align}\label{REI-1}
&\left[ \RE{\vrh,\vuh,\vth}{\tvr,\tvu,\tvt} \right]_0^{\tau}
 +\frac{1}{\penl} \intTauOsh{ \left(|\vuh|^2 + (\vth-\vthB)^2 \right)}
\br&
 \quad
+\intTauTd{ \left|\Dhuh - \bD(\tvu) \right|^2 }
+\intTauTd{|\Gradd\vth-\Grad\tvt|^2}
\br&
\aleq
R_C + R_C' + \sum_{i=1}^7 R_i + R_{\penl} - (R_{\bS} + R_{\vt}),
\end{align}
where $R_C'= R_9 - R'_{\vt} - R'_{\bS} $ is the compatibility error.

\subsection{Estimates on $R_C$}\label{app-cer}
Now we proceed to estimate the consistency error $R_C$. Hereafter we frequently write $L^p(0,T;L^q(\tor))$ as $L^pL^q$ for simplicity.

\begin{Lemma}\label{lmSP3}
Let $(\vrh, \vuh, \vth)$ be a numerical solution obtained by the FV scheme \eqref{VFV} with $(\TS,h,\penl) \in (0,1)^3$ and $ \alpha \in [0,1)$. 
Let assumption \eqref{HP} hold and $\delta>0$ be an arbitrary constant.
Then it holds that
\begin{align}\label{es-1}
& \abs{R_C} \aleq \TS+ h+ h^{1+\alpha} + h^{(1-\alpha)/2} + \penl^{-1}h^3
+ \delta\norm{ \Dhuh - \bD(\tvu)}_{L^2((0,\tau)\times\tor)}^2
+ \delta \frac{\norm{\vth-\vthB}_{L^2((0,\tau)\times\Osh)}^2}{\penl}
\br
& + \delta \norm{ \Gradd \vth - \Grad \tvt }_{L^2((0,\tau)\times \tor)}^2 + \intTau{ \RE{\vrh,\vuh,\vth}{\tvr,\tvu,\tvt}}
\end{align}
for $\tau \in \{t^{k,-}\}_{k=0}^{N_T}$.
\end{Lemma}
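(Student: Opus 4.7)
The plan is to decompose
\begin{align*}
R_C = e_B(\tvt,\tau) + C_B'\,h^{\alpha+1} + e_{\vr}\!\left(\tfrac12|\tvu|^2-\tfrac{\pd\Hc(\tvr,\tvt)}{\pd\vr},\,\tau\right) - \widetilde{e_{\vm}}(\tvu,\tau),
\end{align*}
and apply the consistency error bounds \eqref{ec1-tor}, \eqref{ec2-tor}, \eqref{ec4} from Lemmas~\ref{lem_C1-tor} and~\ref{lem_C4}. The obstruction is that those bounds require the test functions to lie in $L^\infty(0,T;W^{2,\infty}(\tor))$, whereas the extended strong solution is only piecewise smooth: by Definition~\ref{def_ES} and the remark following it, $(\tvu,\tvt)\in W^{2,\infty}(\Of)\cap W^{2,\infty}(\Os)$ but only $W^{1,\infty}(\tor)$ globally. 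I will therefore revisit each elementary piece $E_t$, $E_F$, $E_1,\ldots,E_4$, $E_{\vm,\bS}$, $E_{\vm,p}$, $E_{B,\Grad\vt}$, $E_{B,\rm res}$, $R_{B,1}$, $R_{B,2}$ defined after Lemma~\ref{lem_C1-tor} and in the proof of Lemma~\ref{lem_C4}, splitting each $\tor$-integral according to the mesh partition $\tor=\OIh\cup\Och\cup\OOh$ of Definition~\ref{def_ES2}.

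On the regular regions $\OIh\cup\OOh$ the test functions have full $W^{2,\infty}$ smoothness, so the original arguments reproduce the contributions $\TS + h + h^{\alpha+1} + h^{(1-\alpha)/2}$ from the momentum error and $\TS + h + h^{\alpha+1} + h^{1-\alpha}$ from the density and ballistic-energy errors (note $h^{1-\alpha}\leq h^{(1-\alpha)/2}$ for $\alpha\in[0,1)$). On the transition layer $\Och$ one loses one derivative across $\partial\Of$, but the smallness $|\Och|\aleq h$ from \eqref{EXTD} keeps things tractable: each localised piece can be bounded using the auxiliary estimates \eqref{lm_u1_c}--\eqref{lm_gradu_c-1} of \eqref{lm_ms} together with the $h^{-1}$-blow-up substitute \eqref{EXTE4} for $\Laph\Pim\tvu$ on $\Och$ and the integration-error identity \eqref{lm_f2}. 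This produces contributions of the form $h + \delta\|\Dhuh-\bD(\tvu)\|_{L^2}^2 + \delta\|\Gradd\vth-\Grad\tvt\|_{L^2}^2 + \int_0^\tau \RE{\vrh,\vuh,\vth}{\tvr,\tvu,\tvt}\,\dt$, all of which match the right-hand side of \eqref{es-1}.

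The genuinely new ingredient $\penl^{-1}h^3$ comes from the penalisation remainder $R_{B,1}(\tvth)$ inside $e_B$. Since $\tvt\equiv\vtB$ on $\Os$ and $\tvt$ is Lipschitz across $\partial\Of$, the projection $\tvth$ differs from $\vthB$ only on $\Osh\setminus\OOh\subset\Och$, and there by at most $O(h)$. Combining $|\Osh\setminus\OOh|\aleq h$ from \eqref{EXTD} with Cauchy--Schwarz and Young's inequality gives
\begin{align*}
\frac{1}{\penl}\intTauOsh{\frac{(\vth-\vtB)(\tvth-\vtB)}{\vth}}
&\aleq \frac{h}{\penl}\bigl(\tau\,|\Osh\setminus\OOh|\bigr)^{1/2}\|\vth-\vtB\|_{L^2((0,\tau)\times\Osh)} \\
&\aleq \frac{h^3}{\delta\,\penl} + \delta\,\frac{\|\vth-\vtB\|_{L^2((0,\tau)\times\Osh)}^2}{\penl},
\end{align*}
which supplies the required $\penl^{-1}h^3$ contribution together with an absorbable dissipation term.

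The chief obstacle is the careful localisation of the upwind flux errors $E_1,\ldots,E_4$ in the density and momentum consistency formulas: on faces $\sigma$ straddling $\partial\Of$ the jumps of $\Pim\tvu$ and of $\Pim(\tfrac12|\tvu|^2-\pd_\vr\Hc)$ are only $O(h)$ rather than the $O(h^2)$ that second-derivative regularity would provide, so one order of smoothness is genuinely lost on an $O(h)$-measure set. The resulting defect must be balanced against the a priori bounds \eqref{ap3}--\eqref{ap4} and the numerical dissipation on the left of \eqref{REI-1}, which is exactly what produces the $\delta$-absorbable terms and the $\int_0^\tau R_E$ term on the right of \eqref{es-1}. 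Finally, restricting $\tau\in\{t^{k,-}\}$ eliminates the tail integrals $\int_\tau^{t^{n+1}}\dt$ responsible for a $\TS^{1/2}$ error in the proof of Lemma~\ref{lem_C4}, while a summation-by-parts in time applied to $\TS\int D_t(\vrh s_h)\,D_t\tvth$, exploiting $\pd_t^2\tvt\in L^2((0,T)\times\Of)$ from Remark~\ref{rmk-strong}, upgrades the remaining $\TS^{1/2}$ to the sharper $\TS$ appearing in \eqref{es-1}.
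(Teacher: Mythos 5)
Your proposal follows essentially the same route as the paper: the identical decomposition of $R_C$ into $e_B$, $e_\vr$, $\widetilde{e_{\vm}}$ and $C_B'h^{\alpha+1}$, the term-by-term re-estimation of $E_t$, $E_1,\ldots,E_4$, $E_{\vm,\bS}$, $E_{\vm,p}$, $E_{B,\Grad\vt}$, $E_{B,\rm res}$ and $R_B$ using the splitting $\tor=\OIh\cup\Och\cup\OOh$ with $|\Och|\aleq h$, the same identification of $R_{B,1}$ as the source of $\penl^{-1}h^3$ (your Young-inequality computation matches the paper's use of \eqref{lm_vt0} with $\delta$ rescaled by $\penl/h^2$), and the same discrete summation by parts for $\TS\int D_t(\vrh s_h)\,D_t\tvth$ exploiting $\pd_t^2\tvt\in L^2$. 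This is correct and matches the paper's argument in Appendix~\ref{app-cer}.
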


\begin{proof}
First, we remind that $\tau \in \{t^{k,-}\}_{k=0}^{N_T}$, which implies $\int_{\tau}^{t^{n+1}} \cdot \; \dt = 0$ with $\tau = t^{n+1,-}$.
Denoting $\phi_{\vr} := \frac12\abs{\tvu}^2 - \frac{\pd \Hc(\tvr,\tvt)}{\pd \vr}$,
in what follows we analyze $e_{\vr}(\phi_{\vr}),\, \widetilde{e_{\vm}}(\tvu) , \, e_B(\tvt)$ 
term by term.
Specifically,
\begin{itemize}
\item $E_t$: Recalling \cite[(2.16)]{FLS_IEE} we have
\begin{align*}
&E_t(r_h,\phi) = \sum_{i=1}^3 E_t^{(i)}(r_h,\phi),\quad E_t^{(1)} (r_h,\phi)= -\intTd{ r_h^0 \frac1{\Delta t} \int_0^{\Delta t} \left( \phi(0) - \phi(t) \right) \dt },\\
&
E_t^{(2)}(r_h,\phi) = - \intTd{ r_h^n \frac1{\Delta t} \int_{t^n}^{t^{n+1}} \left( \phi(t+\TS) - \phi(\tau) \right) \dt },
\br
& E_t^{(3)}(r_h,\phi) = -\intTauTd{r_h (\pd_t \phi(t) - D_t \phi(t+\Delta t))}.
\end{align*}
Hence, with our assumption \eqref{HP} and uniform bounds \eqref{ap1} we have
\begin{align*}
& \Bigabs{E_t^{(1)} (r_h,\phi)} + \Bigabs{E_t^{(2)} (r_h,\phi)} \aleq \TS \norm{\pd_t \phi}_{L^{2}L^{2}} \norm{r_h}_{L^{\infty}L^2} \aleq \TS,
\br
& \Bigabs{E_t^{(3)} (r_h,\phi)} \aleq \TS \norm{\pd_t^2 \phi}_{L^{2}L^{2}}\norm{r_h}_{L^{\infty}L^{2}} \aleq \TS
\end{align*}
for $r_h = \vrh, \phi = \phi_{\vr} $ and $r_h = \vm_h, \phi = \tvu$.

\item $E_F$: Recalling \cite[Appendix C]{BLMSY} we have
\begin{align*}
&
\abs{\intTau{E_2(\vrh, \phi_{\vr})}}
\aleq \norm{\pd_x \phi}_{L^{\infty}L^{\infty}} \norm{\Gradd \vuh}_{L^2L^2} h^{(3-\alpha)/2} \aleq h^{1-\alpha}, \br
&
\abs{\intTau{E_2(\vm_h, \tvu)}} \aleq \norm{\pd_x \phi}_{L^{\infty}L^{\infty}} h \norm{\Gradd \vuh}_{L^2L^2} \aleq h^{(1-\alpha)/2}.
\end{align*}
Analogously to \cite[Proof of Theorem 5.5]{LSY_penalty}, applying \eqref{EXTE} and \eqref{lm_ms} we have
\begin{align*}
\abs{\intTau{ E_1(r_h, \phi ) } }
&\aleq \abs{h \sumi \intTauTd{ r_h \cdot \left( \pdmeshi (\pdedgei \Pim \phi)  \;  \Pim \abs{\avs{\uih} } + (\Pim \pdedgei{\Pim\phi}) \pdmeshi \abs{\avs{\uih} } \right)} } \br
&\aleq \sumi \intTauOch{\abs{r_h} \cdot \Pim \abs{\avs{\uih} } } + h \norm{\phi}_{L^{\infty}W^{1,\infty}} \norm{r_h}_{L^\infty L^2} \norm{\Gradh \vuh}_{L^2L^2} \br
&\aleq h+ \intTau{ \RE{\vrh,\vuh,\vth}{\tvr,\tvu,\tvt} } ,
\br
\abs{\intTau{ E_3(r_h, \phi ) } }
&\aleq h^{\alpha+1} + h^{\alpha} \intTauOch{\abs{r_h}} \aleq h^\alpha \left( h+ \intTau{ \RE{\vrh,\vuh,\vth}{\tvr,\tvu,\tvt} } \right),
\br
\abs{\intTau{ E_4(r_h, \phi ) } }
&\aleq h + \intTauOch{\abs{r_h\vuh}} \aleq h + \intTau{ \RE{\vrh,\vuh,\vth}{\tvr,\tvu,\tvt} }.
\end{align*}
for $r_h = \vrh, \phi = \phi_{\vr} $ and $r_h = \vm_h, \phi = \tvu$, where $\pdedgei$ is the $i$-th component of $\Gradd$. 
Hence, we obtain
\begin{align*}
&\abs{E_F(\vrh, \phi_{\vr})} + \abs{E_F(\vm_h,\tvu)} \aleq
h+ h^{(1-\alpha)/2}+h^{\alpha+1}+ 
\intTau{ \RE{\vrh,\vuh,\vth}{\tvr,\tvu,\tvt}}.
\end{align*}

\item $E_{\vm,\bS}$: Applying \eqref{lm_f2} and \eqref{lm_gradu_c} we obtain
\begin{align*}
\abs{E_{\vm,\bS}} & = \Abs{\intTauTd{ \bS_h : (\Grad \tvu - \Gradh \Pim \tvu)}}
 \aleq h + \intTauOch{ \abs{\bS_h}} \aleq h + \intTauOch{ \abs{\Dhuh}}
\br
& \aleq \frac{h}{\delta}+ \delta\norm{ \Dhuh - \bD(\tvu)}_{L^2((0,\tau)\times\tor)}^2 \aleq h + \delta\norm{ \Dhuh - \bD(\tvu)}_{L^2((0,\tau)\times\tor)}^2.
\end{align*}
Hereafter, we always omit the coefficient $1/\delta$ since $\delta$ is constant.
On the other hand, we keep writing  the coefficient $\delta$ in front of $\norm{ \Dhuh - \bD(\tvu)}_{L^2((0,\tau)\times\tor)}^2$ in order to control it with the left-hand-side of \eqref{REI-1} by choosing $\delta$ appropriately.

\item $E_{\vm,p}$: Applying \eqref{lm_f2} and \eqref{ap1} we have
\begin{align*}
\abs{E_{\vm,p}} & = \Abs{\intTauTd{ p_h  (\Div \tvu - \Divh \Pim \tvu)}} \aleq h + \intTauOch{ \abs{p_h}} \aleq h.
\end{align*}


\item $E_{B,\Grad \vt}$: Applying the interpolation error \eqref{proj} we directly obtain
\[
\abs{E_{B,\Grad \vt}} = \abs{\kappa\intTauTd{ \frac{\hvt - \avs{\hvth} }{\vth \vthout } \; \abs{\Gradd \vth}^2 }}\aleq h \norm{\Grad \tvt}_{L^{\infty}L^{\infty}} \aleq h.
\]

\item $E_{B, res}$: Applying \eqref{lm_f2}, \eqref{lm_u1_c}, \eqref{lm_gradvt_c} we have
\begin{align*}
&\abs{\intTauTdB{ \vrh s_h (\pd_t \tvt- D_t \Pim\tvt) }} \aleq \TS \norm{\pd_t^2\tvt}_{L^2L^2} \aleq \TS,
\br
&\abs{\intTauTdB{ \vrh s_h \vuh \cdot (\Grad \tvt- \Gradh \Pim\tvt) }} \aleq h + \intTauOch{ \abs{\vrh s_h \vuh}}
\br
& \hspace{2cm} \aleq h + \intTauOch{ \abs{\vuh}} \aleq h + \intTau{ \RE{\vrh,\vuh,\vth}{\tvr,\tvu,\tvt} },
\br
& \intTauTdB{ \avs{ \frac1{\vth}} \; \Gradd \vth \cdot (\Grad \tvt - \Gradd \Pim \tvt) } 
\br
& \hspace{2cm} \aleq h + \intTauOch{ \abs{\Gradd \vth}} \aleq h + \delta \norm{ \Gradh \vth - \Grad \tvt }_{L^2((0,\tau)\times \tor)}^2,
\br
&\abs{\intTauTd{ \left( \frac1{\vth} - \avs{ \frac1{\vth}} \right) \Gradd \vth \cdot \Grad \tvt } } \aleq h \norm{\tvt}_{L^\infty(0,T;W^{1,\infty}(\tor))}\norm{\Gradd \vth}_{L^2((0,T)\times\tor)}^2,
\end{align*}
which leads to
\begin{align*}
\abs{E_{B, res}} \aleq \TS + h + \delta \norm{ \Gradh \vth - \Grad \tvt }_{L^2((0,\tau)\times \tor)}^2 + \intTau{ \RE{\vrh,\vuh,\vth}{\tvr,\tvu,\tvt} }.
\end{align*}

\item $\intTau{ R_B(\Pim \tvt)}$: Thanks to $\Pim \tvt|_{\OOh} = \vthB|_{\OOh}, \ \abs{\Pim \tvt - \vthB}|_{\Osh \setminus \OOh} \aleq h $, applying \eqref{lm_vt0} with $\delta = \delta\frac{\penl}{h^2}$ we have
\begin{align*}
& \frac{1}{\penl} \abs{\intTau{\int_{\Osh \setminus \OOh} { \frac{(\vth-\vthB) (\Pim \tvt -\vthB) }{\vth} } \dx} }
\\ & \aleq \frac{h}{\penl} \intTau{\int_{\Osh \setminus \OOh} { \abs{\vth-\vthB} } \dx}
 \aleq \delta \frac{\norm{\vth-\vthB}_{L^2((0,\tau)\times\Osh)}^2}{\penl} + \penl^{-1}h^3.
\end{align*}
Applying \eqref{ap2} and
$D_t (\vrh s_h) \cdot D_t \tvt = D_t(\vrh s_h D_t \tvt) - (\vrh s_h)^{\triangleleft} D_t^2 \tvt
$
we have
\begin{align*}
&\abs{ \TS \intTauTd{D_t (\vrh s_h) \cdot D_t \Pim \tvt} }
=\abs{ \TS \intTauTd{D_t (\vrh s_h) \cdot D_t \tvt} }
\\&
\aleq \TS \abs{\intTauTd{D_t(\vrh s_h D_t \tvt)} } + \TS \abs{\intTauTd{(\vrh s_h)^{\triangleleft} D_t^2 \tvt} }
\br
&\aleq \TS \big( \norm{\pdt \tvt}_{L^\infty L^2} + \norm{\pd_t^2\tvt}_{L^2 L^2} \big) \aleq \TS.
\end{align*}
Applying the same approach for $\sum_{i=1}^2 \intTau{E_i(\vrh , \phi_\vr)}$ we obtain
\begin{align*}
\sum_{i=1}^2 \abs{\intTau{E_i(\vrh s_h, \tvt)}} \aleq h^{1-\alpha} + h + \intTau{ \RE{\vrh,\vuh,\vth}{\tvr,\tvu,\tvt}}.
\end{align*}
Collecting the above estimates we have
\begin{align*}
\abs{\intTau{ R_B(\Pim \tvt)}} \aleq & \TS + h+ h^{1-\alpha} + \penl^{-1}h^3+ \delta \frac{\norm{\vth-\vthB}_{L^2((0,\tau)\times\Osh)}^2}{\penl} \br
&+ \intTau{ \RE{\vrh,\vuh,\vth}{\tvr,\tvu,\tvt}} .
\end{align*}
\end{itemize}

Summing up the above estimates, together with $\alpha \in [0,1)$ we finish the proof with
\begin{align*}
& |R_C| \leq \Abs{e_{\vr}(\phi_{\vr})} + \Abs{e_{\vm} (\tvu)} + \Abs{e_B(\tvt)} + \Abs{C_B' h^{\alpha+1}}\br
&\aleq \TS +h^{1+\alpha} + h^{(1-\alpha)/2} + \penl^{-1}h^3
+ \delta\norm{ \Dhuh - \bD(\tvu)}_{L^2((0,\tau)\times\tor)}^2
+ \delta \frac{\norm{\vth-\vthB}_{L^2((0,\tau)\times\Osh)}^2}{\penl}
\br
&
+ \delta \norm{ \Gradd \vth - \Grad \tvt }_{L^2((0,\tau)\times \tor)}^2
+ (1+h^\alpha)\intTau{ \RE{\vrh,\vuh,\vth}{\tvr,\tvu,\tvt}}
.
\end{align*}
\end{proof}

\subsection{Estimates on $R_C'$}\label{app-cer-1}
Now let us estimate the compatibility error $R_C' = R'_{\bS} + R_9 - R'_{\vt}$, cf. \eqref{REI-1}, with
\begin{align*}
&R'_{\bS} = \intTauTdB{ \vuh \cdot \Div \tbS \mathrm{1}_{\Of} + \Gradh \vuh : \tbS },
\br
&R_9 - R'_{\vt} = \intTauTdB{ (\Grad\tvt - \Gradd\vth ) \frac{\Grad\tvt}{\tvt} + (\tvt - \vth) \mathrm{1}_{\Of} \Div\left( \frac{ \Grad \tvt}{\tvt} \right)}.
\end{align*}

\begin{Lemma}\label{lmSP4}
Let $(\vrh, \vuh, \vth)$ be a numerical solution obtained by the FV scheme \eqref{VFV} with $(\TS,h,\penl) \in (0,1)^3$ and $-1 < \alpha <1$. 
Let assumption \eqref{HP} hold and let $\delta>0$ be an arbitrary constant.
Then it holds
\begin{align}\label{es-2}
\abs{R_C'} \aleq& h^{(1-\alpha)/2} + h + \frac{\penl}{h} + \delta \frac{\norm{\vuh}_{L^2((0,\tau)\times\Osh)}^2+\norm{\vth-\vthB}_{L^2((0,\tau)\times\Osh)}^2}{\penl}
\br
&+ \delta \left( \norm{ \Dhuh - \bD(\tvu)}_{L^2((0,\tau) \times\tor)}^2 + \norm{ \Grad\tvt - \Gradd\vth}_{L^2((0,\tau) \times\tor)}^2 \right).
\end{align}
\end{Lemma}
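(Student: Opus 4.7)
The plan is to use discrete summation by parts on $R'_{\bS}$ and $R_9 - R'_{\vt}$ to re-express each as a single pairing against a numerical error ($\vuh$, respectively $\vth - \tvt$), and then to decompose the resulting spatial integral according to the mesh splitting of Definition~\ref{def_ES2}.

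For $R'_{\bS}$ I would first note that $\intTd{\Gradh\vuh : \tbS} = \intTd{\Gradh\vuh : \Pim\tbS}$ (by cell-wise constancy of $\Gradh\vuh$) and apply the duality \eqref{dis_op} row by row to obtain
\[
R'_{\bS} \;=\; \int_0^\tau\!\intTd{\vuh \cdot \bigl(\Div\tbS\,\mathrm{1}_{\Of} - \Divh\Pim\tbS\bigr)}\,\dt.
\]
I would then split $\tor = \OIh \cup \Och \cup \OOh$ (cf.~\eqref{EXTE0}). On $\OIh$ the extension $\tbS$ is smooth, so the projection defect $\Div\tbS - \Divh\Pim\tbS$ is of size $h$ and pairing with $\vuh \in L^\infty L^2$ gives an $O(h)$ bound. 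On $\Och$ (width $h$) the defect is only $O(1)$ but $|\Och|\aleq h$; applying \eqref{lm_gradu_c-1} to the term containing $\Gradh\vuh$ converts the pairing into $h + \delta\norm{\Dhuh - \bD(\tvu)}^2_{L^2}$. On the one-cell strip $\Osh \setminus \OOh$ the continuous part vanishes while $\Divh\Pim\tbS$ has effective magnitude $\aleq h^{-1}\norm{\tbS}_{L^\infty}$ (the normal jump of $\tbS$ across $\partial\Of$ divided by $h$); pairing with the $L^1$-bound from \eqref{lm_u0} yields $\penl/h + \delta\penl^{-1}\norm{\vuh}^2_{L^2(\Osh)}$. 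The extra $h^{(1-\alpha)/2}$ contribution in \eqref{es-2} comes from face-jump terms absorbed via \eqref{ap3}, in the same spirit as the estimate of $E_2$ in Lemma~\ref{lmSP3}.

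For $R_9 - R'_{\vt}$ I would apply the analogous manoeuvre: integrate the $(\Grad\tvt - \Gradd\vth)\cdot\Grad\tvt/\tvt$ piece by parts via \eqref{dis_op} so as to transfer the gradient from $\vth$ onto $\Pim(\Grad\tvt/\tvt)$, producing a structurally identical expression with $(\vth - \tvt)$ replacing $\vuh$ and $\Grad\tvt/\tvt$ replacing $\tbS$. The same three-way splitting then produces $h$ on $\OIh$, $h + \delta\norm{\Gradd\vth - \Grad\tvt}^2_{L^2}$ on $\Och$ via \eqref{lm_gradvt_c}, and $\penl/h + \delta\penl^{-1}\norm{\vth - \vthB}^2_{L^2(\Osh)}$ on $\Osh\setminus\OOh$ via \eqref{lm_vt0}. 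Summing the $\bS$- and $\vt$-contributions then delivers \eqref{es-2}.

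The main obstacle will be the boundary-strip contribution on $\Osh \setminus \OOh$: the classical divergence of $\tbS$ (resp.\ of $\Grad\tvt/\tvt$) vanishes there, but the discrete counterpart has effective magnitude of order $h^{-1}$ coming from the normal jump of the extension across $\partial\Of$, so nothing can be gained from the projection error alone. The only way to compensate is to exploit the penalty-induced $L^2(\Osh)$-smallness of $\vuh$ and $\vth - \vthB$ guaranteed by \eqref{ap4}. A Young-type $L^1$-versus-$L^2$ trade on a set of width $h$ inevitably manufactures the $\penl/h$ residual appearing in \eqref{es-2}, which is why this defect cannot be eliminated without a thicker penalization layer; the remaining interior and interpolation contributions are routine.
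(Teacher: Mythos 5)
Your overall strategy (discrete summation by parts, three-way domain splitting, penalty control of $\vuh$ and $\vth-\vthB$ on the outer strip, jump bounds from \eqref{ap3}) matches the paper's, and you correctly identify the central obstacle: an $O(h^{-1})$ boundary-layer defect that must be absorbed by the penalty. There is, however, a genuine gap in the execution for $R'_{\bS}$. You reduce everything to the single pairing $\int_0^\tau\intTd{\vuh\cdot(\Div\tbS\,\mathrm{1}_{\Of}-\Divh\Pim\tbS)}\dt$ and claim the defect is $O(h)$ on $\OIh$, only $O(1)$ on $\Och$, and $O(h^{-1})$ only on $\Osh\setminus\OOh$. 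This is not true: $\Divh\Pim\tbS$ is a central difference of cell averages, so at every cell $K\in\Ofh\cap\Och$ (the \emph{fluid} side of the boundary strip) whose stencil reaches a neighbour $L$ straddling $\partial\Of$, the value $(\Pim\tbS)_L$ differs by $O(1)$ from the smooth continuation of $\tbS|_{\Of}$ (because $\tbS$ jumps to $0$ across $\partial\Of$), and hence the defect is $O(h^{-1})$ there as well. On that set the penalty gives no control of $\vuh$; the only available bound is \eqref{lm_u1_c}, $\intOch{|\vuh|}\aleq h+\RE{\vrh,\vuh,\vth}{\tvr,\tvu,\tvt}$, which against an $h^{-1}$ weight produces $1+h^{-1}\RE{\vrh,\vuh,\vth}{\tvr,\tvu,\tvt}$ --- not admissible in \eqref{es-2}. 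Relatedly, your appeal to \eqref{lm_gradu_c-1} ``for the term containing $\Gradh\vuh$'' is inconsistent with your own reformulation, which has already eliminated $\Gradh\vuh$, and the $h^{(1-\alpha)/2}$ contribution is asserted rather than produced by your decomposition.

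The paper circumvents exactly this by working with the Gauss-consistent pair $(\Piw,\Divmesh)$ rather than $(\Pim,\Divh)$: since $\int_K\Div\tbS\dx=|K|\,(\Divmesh\Piw\tbS)_K$ holds \emph{exactly} for every $K\in\Ofh$, replacing $\Div\tbS$ by $\Divmesh\Piw\tbS$ is error-free on all of $\Ofh$, including the fluid-side boundary cells, and the $O(h^{-1})$ defect ($I_{\vu,1}$) is confined to $\Osh\setminus\OOh$ and $\Of\setminus\Ofh\subset\Osh$, where \eqref{lm_u0} applies. The remaining discrepancy $I_{\vu,2}=\intTauTd{\vuh\cdot\Divmesh\Piw\tbS+\Gradh\vuh:\tbS}=-\intTauTd{\Gradd\vuh:(\Piw\tbS-\avs{\Pim\tbS})}$ is a face-jump term: its smooth part yields $h\,\norm{\Gradd\vuh}_{L^1L^1}\aleq h^{(1-\alpha)/2}$ via \eqref{ap3} (this is where that exponent actually originates), while its $\Och$ part is split into $\Gradh\vuh$ on $\Och$ (handled by \eqref{lm_gradu_c-1}) plus $h^{-1}|\vuh|$ on a one-cell shift of $\Och$ into $\Osh$ (handled by the penalty). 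The analogous $\Piw/\Divmesh$ device is needed for the temperature block $R_9-R'_{\vt}$. Without it, your argument does not close.
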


\begin{proof}
{\bf The term $R'_{\bS}$.} Let us rewrite $\intTauTd{ \vuh \cdot \mathrm{1}_{\Of} \Div \tbS }$:
\begin{align*}
&\intTauTd{ \vuh \cdot \mathrm{1}_{\Of} \Div \tbS } = \intTauOfh{ \vuh \cdot \Div \tbS} + \int_0^{\tau} \int_{\Of \setminus \Ofh} \vuh \cdot \Div \tbS \dxdt
\br
&= \intTauOfh{ \vuh \cdot \Divmesh \Piw \tbS} + \int_0^{\tau} \int_{\Of \setminus \Ofh} \vuh \cdot \Div \tbS \dxdt
= \intTauTd{ \vuh \cdot \Divmesh \Piw \tbS} + I_{\vu,1}
\end{align*}
with
\begin{align*}
 I_{\vu,1} = - \int_0^{\tau} \int_{\Osh \setminus \OOh} \vuh \cdot \Divmesh \Piw \tbS \dxdt
 + \int_0^{\tau} \int_{\Of \setminus \Ofh} \vuh \cdot \Div \tbS \dxdt.
\end{align*}
Applying \eqref{EXTE} and \eqref{lm} we control $I_{\vu,1}$ with
\begin{align*}
\abs{I_{\vu,1}} \aleq (1+h^{-1}) \norm{\vu}_{L^{\infty}(0,T;W^{2,\infty}(\Of))} \int_0^{\tau} \int_{\Osh \setminus \OOh} \abs{\vuh} \dxdt \aleq h+ \frac{\penl}{h} + \delta \frac{\norm{\vuh}_{L^2((0,\tau)\times\Osh)}^2}{\penl}.
\end{align*}
Recalling the result in \cite[Appendix E]{Basaric} we have
\begin{align} \label{Iu2}
 & I_{\vu,2}: = \intTauTdB{ \vuh \cdot \Divmesh \Piw\tbS + \Gradh \vuh : \tbS }
 = \intTauTdB{ \vuh \cdot \Divmesh \Piw \tbS + \Gradh \vuh : \Pim \tbS }
\br&
 = -\intTau{\intfacesB{ \jump{\vuh} \cdot \left(\Piw \tbS - \avs{\Pim \tbS} \right)\cdot \vn}}
=-\intTauTd{ \Gradd \vuh : ( \Piw\tbS - \avs{\Pim \tbS} )}.
\end{align}
Applying \eqref{lm_gradu_c-1} we control $I_{\vu,2}$ with
\begin{equation}
\begin{aligned}
\abs{I_{\vu,2}} & \aleq h \norm{\Gradd \vuh}_{L^1L^1} + \intTauOch{|\Gradd \vuh|}
\br &
\leq h \norm{\Gradd \vuh}_{L^1L^1}+ 2 \intTau{\int_{\Och}{|\Gradh \vuh|} \dx} + 2 h^{-1} \intTau{\int_{\Omega_h^{C,1} }{|\vuh| \dx}}
\br &
\aleq h^{(1-\alpha)/2}+ h+ \delta \norm{ \Dhuh - \bD(\tvu)}_{L^2((0,\tau) \times\tor)}^2 + \frac{\penl}{h} + \delta \frac{\norm{\vuh}_{L^2((0,\tau)\times\Osh)}^2}{\penl},
\end{aligned}
\end{equation}

 where $\Omega_h^{C,1}$ is obtained by shifting $\Och$ one cell towards $\Os$ so that $\Omega_h^{C,1} \subset \Osh$.
Finally, we obtain
\[
\abs{R'_{\bS}} = \abs{I_{\vu,1} + I_{\vu,2}} \aleq h^{(1-\alpha)/2} + \frac{\penl}{h} + \delta \frac{\norm{\vuh}_{L^2((0,\tau)\times\Osh)}^2}{\penl} + \delta \norm{ \Dhuh - \bD(\tvu)}_{L^2((0,\tau) \times\tor)}^2.
\]

{\bf The term $(R_9 - R'_{\vt})$.}
Analogously as above, we reformulate temperature terms in the following way 
\begin{align*}
&
\intTauOf{ (\tvt - \vth) \Div\left( \frac{ \Grad \tvt}{\tvt} \right)}
= - \intTauTd{\Gradd (\Pim \tvt - \vth) \cdot \Piw \left( \frac{ \Grad \tvt}{\tvt} \right)} + \sum_{i=1}^4 I_{\vt,i},
\\ &
\intTauTd{ (\Grad\tvt - \Gradedge\vth ) \cdot\frac{\Grad\tvt}{\tvt} }
=
\intTauTd{ \Gradedge (\Pim\tvt - \vth) \cdot \Piw \left( \frac{ \Grad \tvt}{\tvt} \right)}+
I_{\vt,5} +I_{\vt,6} ,
\end{align*}
where
\begin{align*}
& I_{\vt,1} = \int_0^{\tau} \int_{\Of \setminus \Ofh} (\tvt - \vth) \Div\left( \frac{ \Grad \tvt}{\tvt} \right) \dxdt, \quad
I_{\vt,2} = \intTauOfh{( \tvt - \Pim \tvt) \Div\left( \frac{ \Grad \tvt}{\tvt} \right)},
\\&
I_{\vt,3} = \intTauOOh{ (\vth - \Pim \tvt ) \ \Divmesh \left(\Piw \frac{ \Grad \tvt}{\tvt} \right)},
 \\&
I_{\vt,4} =\intTau{\int_{\Osh \setminus \OOh}{ (\vth - \Pim \tvt ) \ \Divmesh \left(\Piw \frac{ \Grad \tvt}{\tvt} \right)}\dx},
\\&
I_{\vt,5} = \intTauTd{ (\Grad\tvt - \Gradedge\vth ) \cdot \left( \frac{\Grad\tvt}{\tvt} - \Piw \left( \frac{ \Grad \tvt}{\tvt} \right) \right)},
\\&
I_{\vt,6}= \intTauTd{ (\Grad\tvt - \Gradedge (\Pim\tvt )) \cdot \Piw \left( \frac{ \Grad \tvt}{\tvt} \right)}.
\end{align*}
Thanks to interpolation error estimates \eqref{proj} and estimate \eqref{lm_vt0} -- \eqref{lm_0} we control $I_{\vt,i}$ terms as follows
\begin{align*}
 \abs{I_{\vt,1}}
 & \aleq \norm{\vt}_{L^\infty(0,T;W^{2,\infty}(\Of))} \int_0^{\tau} \int_{\Of \setminus \Ofh} 1 \dxdt \aleq h,
\\
\abs{I_{\vt,2}}
&\aleq h \norm{\vt}_{L^\infty(0,T;W^{1,\infty}(\Of))} \norm{\vt}_{L^1(0,T;W^{2,1}(\Of))} \aleq h,
\\
 \abs{I_{\vt,3}} 
&\aleq 
\intTauOOh{ \abs{\vth - \vthB } }
\aleq
\delta \frac{\norm{\vth-\vthB}_{L^2((0,\tau)\times\Osh)}^2}{\penl}+ \frac{\penl}{\delta},
\\
 \abs{I_{\vt,4}} 
&\aleq
h^{-1} \intTau{\int_{\Osh \setminus \OOh}{ \abs{\vth - \Pim \tvt }}\dx}
\leq h^{-1} \intTau{\int_{\Osh \setminus \OOh}{ \left( \abs{\vth - \vthB }+ \abs{\vthB - \Pim \tvt } \right) }\dx }
\\& \aleq
 \delta \frac{\norm{\vth-\vthB}_{L^2((0,\tau)\times\Osh)}^2}{\penl} + \frac{\penl}{h \delta} +h,
\\
\abs{I_{\vt,5}} 
&\aleq h + \intTauOch{ \abs{\Grad\tvt - \Gradedge\vth}}
\aleq h + \delta \norm{ \Grad\tvt - \Gradedge\vth}_{L^2((0,\tau)\times\tor)}^2,
\\
\abs{I_{\vt,6}} 
& \aleq h+
\intTauOch{ \abs{\Piw \left( \frac{ \Grad \tvt}{\tvt} \right)}}
\aleq h.
\end{align*}
Consequently, we have
\begin{align*}
 \abs{R_9 - R'_{\vt}} = \abs{\sum_{i=1}^6 I_{\vt,i}
} \aleq h+ \frac{\penl}{h} + \delta \frac{\norm{\vth-\vthB}_{L^2((0,\tau)\times\Osh)}^2}{\penl} + \delta \frac{\norm{\vth-\vthB}_{L^2((0,\tau)\times\Osh)}^2}{\penl},
\end{align*}
which completes the proof.

\end{proof}


\subsection{Relative energy inequality revisited}
Let us continue to estimates the right-hand-side of \eqref{REI-1}.
Under assumption \eqref{HP}, we obtain from \eqref{EN}, \eqref{lm_u1} and \eqref{lm_vt1} that
\begin{equation}\label{es-3}
\begin{aligned}
&\Abs{R_1 + R_2 + R_4 + R_5 + R_6} \aleq \intTau{ \RE{\vrh,\vuh,\vth}{\tvr,\tvu,\tvt}},
\\
&\Abs{R_3} \aleq \intTauOs{ \Abs{ \vuh }} \aleq \delta \frac{\norm{\vuh}_{L^2((0,\tau )\times\Osh)}^2}{\penl} + \frac{\penl}{\delta},
\\
&\Abs{R_7} \aleq \intTauOs{ \Abs{ \vth - \tvt }} = \intTauOs{ \Abs{ \vth - \vtB }} \leq \intTauOsB{ \Abs{ \vth - \vthB } + \Abs{ \vthB - \vtB }}
\\
&\hspace{0.8cm}\aleq \delta \frac{\norm{\vth-\vthB}_{L^2((0,\tau )\times\Osh)}^2}{\penl} + \frac{\penl}{\delta} + h.
\end{aligned}
\end{equation}
Applying \eqref{EXTE1} and \eqref{lm_u0} with $\delta := \delta\frac{\penl}{h^2}$ we have
\begin{align*}
\Abs{R_{\penl}} \aleq \frac{h}{\penl} \intTau{ \int_{\Osh \setminus \Os}{ \abs{\vuh} } \dx } \aleq \delta \frac{\norm{\vuh}_{L^2((0,\tau)\times\Osh)}^2}{\penl} + \penl^{-1}h^3.
\end{align*}
Moreover, with $\Div = \mbox{tr}(\bD), \Divh = \mbox{tr}(\bD_h)$ and \eqref{S6} we have
\begin{align}\label{es-4}
\abs{R_{\bS}} \aleq & \abs{\intTauTd{ \frac{2\mu\tvt}{ \vth} \left( \left|\Dhuh - \frac{\vth}{ \tvt} \bD(\tvu) \right|^2 - \left|\Dhuh - \bD(\tvu) \right|^2 \right) } }
\br
=&\abs{\intTauTd{ \frac{2\mu\tvt}{ \vth} \left( \frac{\tvt - \vth}{\tvt} \bD(\tvu) : (\Dhuh - \bD(\tvu)) + \frac{(\tvt - \vth)^2}{\tvt^2} |\bD(\tvu)|^2\right)}}
\br
\aleq & \delta \norm{ \Dhuh - \bD(\tvu)}_{L^2((0,\tau )\times\tor)}^2 + \norm{\tvt - \vth}_{L^2((0,\tau )\times\tor)}^2
\br
\aleq & \delta \norm{ \Dhuh - \bD(\tvu)}_{L^2((0,\tau )\times\tor)}^2 + \intTau{ \RE{\vrh,\vuh,\vth}{\tvr,\tvu,\tvt} }.
\end{align}

All together, collecting \eqref{es-1}, \eqref{es-2}, \eqref{es-3}, and \eqref{es-4} and recalling \cite[Appendix E]{BLMSY}
\begin{align*}
 \Abs{R_{\vt}} \aleq h + \intTau{ \RE{\vrh,\vuh,\vth}{\tvr,\tvu,\tvt} }
+\delta \norm{\Gradd\vth-\Grad\tvt}_{L^2((0,\tau ) \times\tor)}^2
\end{align*}
we obtain the energy inequality via choosing $\delta \in (0,1)$:
\begin{equation}\label{RE04}
\begin{aligned}
&\left[ \RE{\vrh,\vuh,\vth}{\tvr,\tvu,\tvt} \right]_0^{\tau}
 +\frac{1}{\penl} \intTauOsh{\left( |\vuh|^2 + (\vth-\vthB)^2 \right)}\dt
\\& \hspace{2cm}+\intTauTd{\left|\Dhuh - \bD(\tvu) \right|^2 } + \intTauTd{|\Gradd\vth-\Grad\tvt|^2}
\\& \aleq \TS + h^{1+\alpha} + h^{(1-\alpha)/2} + \penl^{-1} h^3 + \penl+ \penl h^{-1} + \intTau{ \RE{\vrh,\vuh,\vth}{\tvr,\tvu,\tvt}}.
\end{aligned}
\end{equation}
Further, applying Gronwall's lemma, together with the error estimate caused by regular initial data
\begin{align*}
&\RE{\vrh,\vuh,\vth}{\tvr,\tvu,\tvt}(0) \aleq \intTdB{ \abs{\vuh^0 -\tvu(0)}^2 + \abs{ \vth^0 - \tvt(0)}^2 + \abs{\vrh^0 - \tvr(0)}^2 }
 \aleq h^2
\end{align*}
we obtain for any $\alpha \in [0,1)$ and for any $\tau \in \{t^{k,-}\}_{k=0}^{N_T}$ that
\begin{equation*}
\begin{aligned}
&\RE{\vrh,\vuh,\vth}{\tvr,\tvu,\tvt}(\tau)
 +\frac{1}{\penl} \intTauOsh{\left( |\vuh|^2 + (\vth-\vthB)^2 \right) }
\\&\hspace{2cm} +\intTauTd{\left|\Dhuh - \bD(\tvu) \right|^2 } + \intTauTd{|\Gradd\vth-\Grad\tvt|^2}
\\& \aleq \TS + h^{1+\alpha} + h^{(1-\alpha)/2} + \penl^{-1} h^3 +\penl h^{-1}.
\end{aligned}
\end{equation*}
For $\tau\in[0,T]$, thanks to
\begin{align*}
 & \norm{ (\vrh,\vu_h,\vth) - (\tvr,\tvu,\tvt) }_{L^\infty(0,\tau; L^2(\tor))} = \sup_{t \in [0,\tau] } \norm{ (\vrh,\vu_h,\vth) - (\tvr,\tvu,\tvt) }_{L^2(\tor)}
 \br
 & \hspace{2cm} \leq \sup_{t \in \{t^{k,-}\}_{k=0}^{N_T} } \norm{ (\vrh,\vu_h,\vth) - (\tvr,\tvu,\tvt) }_{L^2(\tor)} + \TS \norm{ \pd_t (\tvr,\tvu,\tvt) }_{L^\infty(0,T;L^2(\tor))}
 \br
 & \norm{ \Dhuh - \bD(\tvu) }_{L^2((0,\tau)\times \tor)} \leq \norm{\Dhuh - \bD(\tvu) }_{L^2((0,T)\times \tor)} ,
 \br
 & \norm{\Gradd \vth- \Grad \tvt}_{L^2((0,\tau)\times \tor)} \leq \norm{\Gradd \vth- \Grad \tvt}_{L^2((0,T)\times \tor)}
\end{align*}
we finish the proof.
\end{document}